\newtheorem{thm}{Theorem}[section]
\newtheorem{Theorem}{Theorem}[section]
\newtheorem{Definition}{Definition}[section]
\newtheorem{example}[thm]{Example}
\newtheorem{remark}[thm]{Remark}
\newtheorem{lemma}[thm]{Lemma}
\newtheorem{corollary}[thm]{Corollary}
\newcommand{\vC}{\v{C}}
\newcommand{\Co}{\mathfrak{Co}}
\newcommand{\fcs}{fuzzy closure space}
\newcommand{\intt}{\operatorname{int}}
\newcommand{\fts}{fuzzy topological space}
\title{\textbf{Separation Axioms in Fuzzy Closure Spaces}}
\author[1]{Albin James \thanks{Corresponding author: albinjames5@gmail.com, albinjp@cusat.ac.in}}
\author[2]{T. P. Johnson \thanks{tpjohnson@cusat.ac.in} }
\affil[1]{Department of Mathematics, Cochin University of Science and Technology, Cochin-22, Kerala , India}
\affil[2]{Applied Sciences and Humanities Division, School of Engineering,  Cochin University of Science and Technology, Cochin-22, Kerala, India}
\date{}
\begin{document}
	\maketitle

	\thispagestyle{empty}

	\begin{abstract}
		Fuzzy closure spaces are an extension of classical closure spaces in topology, where the concept of closure is defined in terms of fuzzy sets. This article introduces interior operators and neighborhood systems in fuzzy closure spaces. Using that, we have redefined \v{C}F-continuity. Separation axioms such as $\v{C}FT_0$, $\v{C}FT_1$,\text{ and } $\v{C}FT_2$, \v{C}F-Urysohn, \v{C}F-regular, and \v{C}F-normal in fuzzy closure spaces are introduced using these neighborhood systems. Additive, productive, hereditary, and other properties of these axioms have been observed. Relationships between these axioms are also investigated.
		
		\vspace*{.5cm}
		\textbf{Keywords:} Fuzzy closure spaces, Separation axioms, Neighborhood systems, Interior operator, \v{C}F-continuity.\\
		\noindent \textbf{2020 AMS Subject Classification:} 54A40, 54A05, 54D10, 54D15, 03E72
	\end{abstract}

			\section{Introduction}
			
			The concept of a topological space is generally introduced in terms of the axioms of the open sets. However, there are different methods to describe a topology on the set $X$ are often used in terms of neighborhood systems, the family of closed sets, the closure operator, the interior operator, etc.  To each topological space $(X, \tau)$ we can associate a closure operator $c:P(X) \rightarrow P(X)$, where $c(A)$ is the smallest closed set which contains $A$ for each $A \subseteq X$. This closure operator have the following properties, (i) $c(\phi)=\phi$ (ii) $A \subseteq c(A)$ (iii) $c$ commutes with finite unions (iv) $c$ is idempotent. Such a closure operator is known as a topological closure operator or Kuratowski's closure operator. In 1966, Edward \v{C}ech \cite{CECH}  introduced \v{C}ech closure operators by weakening the idempotent law of topological closure operators. A set $X$ together with a \v{C}ech closure operator is called a \v{C}ech closure space. Thus \v{C}ech closure space can be considered as a generalization of topological spaces. Subsequently, Birkoff \cite{BIRK} defined closure operator in another way by weakening the finite additivity property of the topological closure operator.
			
			By extending the idea of crisp sets, Zadeh\cite{ZADE} established the concept of fuzzy sets in 1965. Chang\cite{CHAN} later gave fuzzy sets a topological structure. Later, the concept of fuzzy closure operators on a set $X$ was introduced separately by Mashhour, Ghanim \cite{MASH} and Srivastava et al.\cite{SRIV}. The definition of Mashhour and Ghanim is an analogue of \v{C}ech closure operators. Srivastava et al. \cite{SRIV} have introduced fuzzy closure operators as an analogue of Birkoff closure operator\cite{BIRK}. Srivastava et al. \cite{REKH} introduced $T_0$ and $T_1$ separation properties in their fuzzy closure spaces. In their definition of fuzzy closure spaces, most of the properties will coincide with the fuzzy topology associated with it. Even the continuous maps of fuzzy closure spaces will be the same as that of it's associated fuzzy topological spaces. So we are working on fuzzy closure spaces introduced by Mashhour and Ghanim \cite{MASH}.
			\par  In 1985, Mashhour and Ghanim\cite{MASH} have introduced the fuzzy closure spaces and extended classical concepts such as subspaces, sums, and products. They have introduced \v{C}F-continuity, compactness, and regularity and studied its properties. Also, they have investigated the interaction between fuzzy proximity spaces and fuzzy closure spaces. Johnson \cite{JOHN} studied completely homogenous fuzzy closure spaces and lattices of fuzzy closure operators. In 2002, Sunil\cite{SUNIL} studied generalised-closed sets, and well closed fuzzy points in fuzzy closure spaces. He paid some attention to separation axioms such as $FT_1$ and quasi separation. But according to his definition, a fuzzy closure space is $FT_1$ (respectively, quasi separated) if its associated fuzzy topology is $FT_1$ (respectively, quasi separated). Further, he defined static fuzzy closure spaces, D-fuzzy closure spaces, and adjacent fuzzy closure operators. In 2007, Bloomy Joseph\cite{BLOOMY} studied denseness in fuzzy closure spaces and fuzzy closure fuzzy convexity spaces. In 2014, Tapi and Deole (\cite{TAPI}\cite{DEOLE}) introduced various forms of connectedness in fuzzy closure spaces and fuzzy biclosure spaces. Rachna Navalakhe \cite{TAPI2} introduced separation axioms such as Hausdorff, regular and normal in fuzzy closure spaces. They have used fuzzy open sets for the separation. So according to their definition, a fuzzy closure space satisfies a certain separation property if its associated fuzzy topology also posses the same property. Their definitions are not an extension of separation axioms in \v{C}ech closure spaces. Also, she has studied generalised-closed sets, generalised forms of separation axioms, $\delta $-fuzzy closed sets, separation axioms using $\delta$-fuzzy closed sets, and studied analogous concepts in fuzzy biclosure spaces.
			
			\par In this paper, we have introduced the interior operator and hence the neighborhood systems in fuzzy closure spaces. We have defined the notion of \v{C}F-continuity at a fuzzy point. While defining separation axioms on fuzzy closure spaces, we want to make it an extension of that of \v{C}ech closure spaces and fuzzy topological spaces. Many authors have introduced separation axioms in fuzzy topological spaces differently. In L-topology \cite{LIU}, it was introduced by using Q-neighborhoods. If we introduce separation axioms on fuzzy closure spaces using Q-neighborhoods, the closure operator will have no role. So we introduce it as an extension of separation properties of fuzzy topological spaces as introduced by Mashhur et al. \cite{MASH2}. Using the neighborhood systems, we have introduced separation properties such as \v{C}F$T_0$, \v{C}F$T_1$, \v{C}F$T_2,$ \v{C}F-Urysohn, \v{C}F-regular, and \v{C}F-normal. Additive and productive properties of these separation axioms have been studied. All these separation properties are fuzzy closure properties. The relationships between these separation axioms are also discussed. 
			
			\section{Preliminaries}

			This section contains some basic concepts and theorems about fuzzy topological spaces, \v{C}ech closure spaces, and fuzzy closure spaces which will be needed in the sequel. Throughout this paper $X, Y, Z,$ etc. denotes non-empty ordinary sets, $f,g,h,$ etc. denotes fuzzy sets defined on an ordinary set, and $x,y,x^1,x^2,x^3,$ etc. denotes elements of ordinary sets.

			\begin{Definition}{\cite{CECH}}
				A \v{C}ech closure operator (also called a \v{C}-closure operator, or simply a closure operator) on a set $X$ is a function $ c: \{0,1\}^{X} \rightarrow \{0,1\}^{X} $ satisfying the following axioms:
				\begin{itemize}
					\item [(i)]	{$ c(\emptyset) = \emptyset $},
					\item[(ii)] {$A \subset c(A) $ }, $\text{ for all } A \subseteq X$,
					\item[(iii)]{$  c(A \cup B) = c(A)\cup c(B) $}, $\text{ for all } A,B \subseteq X$.
				\end{itemize}
				The pair $(X,c)$ is called a \v{C}ech closure space or simply a closure space.    
			\end{Definition}

			If $c$ satisfies idempotency also (i.e., $c(c(A))=c(A)$, $\text{ for all } A \subseteq X$), it will become a topological closure operator. Separation properties, connectedness, and compactness in \v{C}ech closure spaces are studied in \cite{SUNI}.

			Let $X$ be an ordinary non-empty set. A fuzzy set $f$ on $X$ is a mapping from $X$ to the closed unit interval $I=[0,1]$ of the real line, associating each $x$ of $X$ with its membership value $f(x)$. The families $\{0,1\}^X$ and $I^X$ denote, respectively, the collections of all crisp subsets and all fuzzy subsets of $X$. The constant fuzzy sets $\underline{0}$ and $\underline{1}$ are given by assigning the value $0$ and $1$ to every element of $X$, respectively. The fuzzy set $\underline{0}$ represents the fuzzy empty set, while $\underline{1}$ corresponds to the fuzzy universal set. Let $A\subset X$, $1_A$ denotes the fuzzy subset of $X$ having membership value 1 on elements of $A$ and 0 otherwise. For any fuzzy set $f$ in $X$, by $\mathfrak{Co}(f)$ we mean the complement of $f$ in $X$ and is defined as, $\mathfrak{Co}(f)(x)= 1-f(x)$. Two fuzzy sets $f$ and $g$ of $X$ are equal if $f(x)=g(x)$ for all $x\in X$. If $f(x)\leq g(x) \hspace{0.2cm}\text{ for all } x \in X$, then $f$ is said to be a subset of $g$.

			Let $X$ be a non-empty set. A fuzzy set $f$ on $X$ is defined as a mapping $f:X \to I=[0,1]$, where $f(x)$ represents the degree of membership of each $x\in X$ in $f$. The families $\{0,1\}^X$ and $I^X$ denote, respectively, the collections of all crisp subsets and all fuzzy subsets of $X$. The constant fuzzy sets $\underline{0}$ and $\underline{1}$ are given by assigning the value $0$ and $1$ to every element of $X$, respectively. For any subset $A\subseteq X$, the characteristic fuzzy set $1_A$ is defined by $1_A(x)=1$ if $x\in A$ and $1_A(x)=0$ otherwise. For a fuzzy set $f$ on $X$, the complement $\mathfrak{Co}(f)$ is defined by $\mathfrak{Co}(f)(x)=1-f(x)$ for all $x\in X$. Two fuzzy sets $f$ and $g$ are said to be equal if $f(x)=g(x)$ for every $x\in X$. Moreover, if $f(x)\leq g(x)$ for all $x\in X$, then $f$ is regarded as a fuzzy subset of $g$.

			\begin{Definition}{\cite{CHAN}}
				A \emph{fuzzy topology} on a set $X$ is a family $F$ of fuzzy subsets of $X$ that satisfies the following conditions:
				\begin{itemize}
					\item[(i)] $\underline{0}, \underline{1} \in F$,
					\item[(ii)] if $g,h \in F$, then $g \wedge h \in F$,
					\item[(iii)] if $g_i \in F$ for each $i \in I$, then $\underset{i \in I}{\bigvee} g_i \in F$.
				\end{itemize}
				The pair $(X, F)$ is called a {fuzzy topological space} (abbreviated as fts). The elements of $F$ are referred to as {open fuzzy subsets} of $X$, while their complements are called {closed fuzzy subsets}. A fuzzy topological space $(X,F)$ with $F=\{\underline{0}, \underline{1}\}$ is said to be the {indiscrete} (or trivial) fuzzy topology, whereas if $F=[0,1]^X$, it is referred to as the {discrete} fuzzy topology.
			\end{Definition}

			\begin{Definition}\cite{CHAN}
				Let $f$ be a fuzzy subset of an fts $(X,F)$. The closure of $f$ is defined to be the fuzzy subset $\bigwedge \{g: f\leq g, \mathfrak{Co}(g)\in F\}$ and is denoted by $\overline{f}$.
			\end{Definition}
			
			\begin{Definition}\cite{CHAN}
				Let $X$ and $Y$ be two sets and let $\theta: X \rightarrow Y$ be a function. Then, for any fuzzy subset $g$ of $X$, $\theta(g)$ is a fuzzy subset in $Y$ defined by 
				
				\[
				\theta(g)(y) =
				\begin{cases}
					\sup \{\, g(x) : x \in X, \ \theta(x) = y \}, & \text{if } \theta^{-1}(y) \neq \emptyset, \\[6pt]
					0, & \text{if } \theta^{-1}(y) = \emptyset.
				\end{cases}
				\]
				
				For a fuzzy subset $h$ of $Y$, $\theta^{-1}(h)$ is a fuzzy subset of $X$ defined by $\theta^{-1}(h)(x)=h(\theta(x))$.
			\end{Definition}
			
			\begin{Definition}\cite{YING} 
				The fuzzy subset $x_\lambda$ of $X$, with $x \in X$ and $0 <\lambda \leq 1$ defined by, 
				
				\[
				x_\lambda(y) =
				\begin{cases}
					\lambda, & \text{if } y = x, \\[6pt]
					0, & \text{otherwise}.
				\end{cases}
				\]
				
				is called a fuzzy point in $X$ with support $x$ and value $\lambda$. Two fuzzy points with different supports are called distinct. When $\lambda=1$, $x_\lambda = x_1$ is called a fuzzy singleton.
			\end{Definition}

			\begin{Definition}\cite{SUNIL2}
				Let $(X, F)$ be a fuzzy topological space and $x_\lambda$ be a fuzzy point in $X$. If $\overline{{x_\lambda}}$ is again a fuzzy point, it is said to be well closed.
			\end{Definition}
			
			\begin{Definition}\cite{LIU}
				Let $f$ be a fuzzy subset of $X$, it's support is defined as $\operatorname{supp}(f)= \{x: f(x)> 0\}$.
			\end{Definition}
			
			\begin{Definition}\cite{MASH2}
				A fuzzy topological space $(X,F)$ is said to be $FT_0$ if for every pair of distinct fuzzy points $x_\lambda$ and $y_\gamma$, there exist an $f\in F$ such that $x_\lambda \in f \leq \mathfrak{Co}(y_\gamma)$ or  $y_\gamma \in f \leq \mathfrak{Co}(x_\lambda)$. 
			\end{Definition}

			\begin{Definition}\cite{MASH2}
				A fuzzy topological space $(X,F)$ is said to be $FT_1$ if and only if for every pair of distinct fuzzy points $x_\lambda$ and $y_\gamma$, there exist open sets $f,g\in F$ such that $x_\lambda \in f \leq \mathfrak{Co}(y_\gamma)$ and  $y_\gamma \in g \leq \mathfrak{Co}(x_\lambda)$.
			\end{Definition}
			
			\begin{Theorem}\cite{KERR}\label{thm:FT1}
				A fuzzy topological space $(X, F)$ is $FT_1$ if and only if every fuzzy singleton is closed.
			\end{Theorem}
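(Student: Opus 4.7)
The plan is to prove the two directions separately, and the whole argument turns on moving between the crisp singleton $x_1$ and an arbitrary fuzzy point at the same support.

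For the forward implication, assume $(X,F)$ is $FT_1$ and fix any $x\in X$. I want to show that $\mathfrak{Co}(x_1)$ belongs to $F$. For every $y\neq x$, the fuzzy points $y_1$ and $x_1$ have distinct supports, so $FT_1$ supplies an $f_y\in F$ with $y_1\in f_y\leq \mathfrak{Co}(x_1)$; in particular $f_y(y)=1$ and $f_y(x)=0$. Then $g:=\bigvee_{y\neq x}f_y\in F$ by axiom (iii) of a fuzzy topology. I expect to verify by a direct pointwise computation that $g(x)=0$ and $g(z)=1$ for each $z\neq x$, which forces $g=\mathfrak{Co}(x_1)$. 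Thus $\mathfrak{Co}(x_1)$ is open, i.e.\ the fuzzy singleton $x_1$ is closed.

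For the converse, suppose every fuzzy singleton $x_1$ is closed and let $x_\lambda, y_\gamma$ be distinct fuzzy points (so $x\neq y$). I would simply produce the separating open sets by taking complements of the closed fuzzy singletons: set $f=\mathfrak{Co}(y_1)$ and $g=\mathfrak{Co}(x_1)$, both of which lie in $F$ by hypothesis. Then $f(x)=1\geq\lambda$ gives $x_\lambda\in f$, while $f(y)=0\leq 1-\gamma$ and the trivial bound at other points give $f\leq \mathfrak{Co}(y_\gamma)$; the symmetric check shows $y_\gamma\in g\leq \mathfrak{Co}(x_\lambda)$. Hence $(X,F)$ is $FT_1$.

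The argument is essentially routine; the only mild subtlety is interpreting ``distinct fuzzy points'' as having different supports (as declared in the preliminaries), which lets me pick the full value $\lambda=\gamma=1$ in the forward direction without ambiguity. The main thing to be careful about is that in the forward direction one really needs to apply $FT_1$ to pairs involving the \emph{crisp} point $x_1$ (value $1$), not merely to some $x_\lambda$, so that the resulting $f_y$'s are forced to vanish at $x$ and attain $1$ at $y$, producing $\mathfrak{Co}(x_1)$ on the nose after taking the supremum.
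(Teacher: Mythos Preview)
The paper does not actually prove this theorem: it is listed in the preliminaries as a cited result from \cite{KERR}, so there is no in-paper argument to compare against.

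Your proof is correct and is essentially the standard one. Both directions work as you describe. In the forward direction, applying $FT_1$ to the pair $(y_1,x_1)$ for each $y\neq x$ forces $f_y(y)=1$ and $f_y(x)=0$, so the open set $\bigvee_{y\neq x}f_y$ is exactly $\mathfrak{Co}(x_1)$. In the converse, the complements of the closed singletons $\mathfrak{Co}(y_1)$ and $\mathfrak{Co}(x_1)$ do the job immediately. Your remark about needing to work with the crisp value $1$ in the forward direction is well taken: applying $FT_1$ to $(y_\gamma,x_1)$ with $\gamma<1$ would only give $f_y(y)\geq\gamma$, which is not enough to reconstruct $\mathfrak{Co}(x_1)$ as a supremum.
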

			
			\begin{Definition}\cite{MASH2}
				A fuzzy topological space $(X, F)$ is said to be $FT_s$ (strong $FT_1$) if every fuzzy point is closed.
			\end{Definition}


			\begin{Definition}\cite{MASH2}
				A fuzzy topological space $(X,F)$ is $FT_2$ if for every pair of distinct fuzzy points $x_\lambda$ and $y_\gamma$, there exists $f,g\in F$ such that $f \leq \mathfrak{Co}(g)$ and $x_\lambda \in f \leq \mathfrak{Co}(y_\gamma)$ and  $y_\gamma \in g \leq \mathfrak{Co}(x_\lambda)$.
			\end{Definition}

			\begin{Definition}\cite{MASH2}
				A fuzzy topological space $(X, F)$ with the closure operator $c$ is said to be $FT_{2\frac{1}{2}}$ or fuzzy Urysohn if for every pair of distinct fuzzy points $x_\lambda$ and $y_\gamma$, there exist $f,g\in F$ such that $c(f) \leq \mathfrak{Co}(c(g))$ and $x_\lambda \in f \leq \mathfrak{Co}(y_\gamma)$ and  $y_\gamma \in g \leq \mathfrak{Co}(x_\lambda)$. 
			\end{Definition}

			\begin{Definition}\cite{MASH2}
				A fuzzy topological space $(X, F)$ is said to be fuzzy regular if for all fuzzy point $x_\lambda$, and every closed fuzzy set $k$ in $X$ such that $x_\lambda \in \mathfrak{Co}(k)$,  there exist $f,g\in F$ such that $x_\lambda \in f $ and  $k \leq g $ and $f \leq \mathfrak{Co}(g)$. A fuzzy regular space which is also $FT_s$ is said to be $FT_3$.
			\end{Definition}

			\begin{Definition}\cite{MASH2}
				A fuzzy topological space $(X,F)$ is said to be fuzzy normal if for all closed fuzzy sets $k_1,k_2$ in $X$ such that $k_1 \leq \mathfrak{Co}(k_2)$, there exists $f_1,f_2\in F$ such that $k_1 \leq  f_1 $ and  $k_2 \leq f_2 $ and $f_1 \leq \mathfrak{Co}(f_2)$. A fuzzy normal space which is also $FT_s$ is said to be $FT_4$.
			\end{Definition}

			\begin{Definition}\cite{MASH}
				A \v{C}ech fuzzy closure operator (abbreviated as \v{C}F-closure operator, or simply a fuzzy closure operator) on a set $X$ is a mapping $c: I^{X} \rightarrow I^{X}$ that satisfies the following axioms:
				\begin{itemize}
					\item[(i)] $c(\underline{0}) = \underline{0}$,
					\item[(ii)] $f \leq c(f)$ for all $f \in I^X$,
					\item[(iii)] $c(f \vee g) = c(f) \vee c(g)$ for all $f,g \in I^X$.
				\end{itemize}
				The pair $(X,c)$ is called a \v{C}ech fuzzy closure space (or fuzzy closure space, abbreviated as fcs). If, in addition, $c$ satisfies the idempotent property, i.e., $c(c(f))=c(f)$ for all fuzzy subsets $f$ of $X$, then $c$ is said to be fuzzy topological.
			\end{Definition}

			\begin{example}
				The fuzzy closure operator $i$ defined by \hspace{0.1cm}$
				i(\underline{0}) = \underline{0}, \hspace{0.1cm}  i(f) = \underline{1} $  for every non-empty fuzzy subset $f$ of $X,$
				is called the {indiscrete fuzzy closure operator}.  
				
				Similarly, the fuzzy closure operator $d$ defined by $d(f) = f  \text{  for all fuzzy subsets } f \text{ of } X,$ is called the {discrete fuzzy closure operator}.
			\end{example}
			
			A \v{C}ech closure space $(X,c)$ is said to be finitely generated\cite{ROTH} if $c(A)=\underset{x\in A}{\bigcup} c(x)$ for every. We can define an analogous concept in fuzzy closure spaces as follows.    	
			\begin{Definition}
				An fcs $(X,c)$ is said to be finitely generated if $c(f)= \underset{x_\lambda \leq f}{\bigvee}c(x_\lambda)$ for all $f\in I^X$.
			\end{Definition}
			The essential thing is that a finitely generated fuzzy closure operator is completely determined by its action on fuzzy points.
			
			\begin{Definition}\cite{MASH}
				Let  $(X,c)$ be a fuzzy closure space (fcs), the fts $(X,\tau(c))$, where $\tau(c)= \{f: c(\mathfrak{Co}(f))=\mathfrak{Co}(f)\}$ is called the fts associated with  $(X,c)$. A fuzzy subset $f$ of $X$ is said to be closed in $(X,c)$ if $c(f)=f$. 
			\end{Definition}
			
			Different fuzzy closure operators can associate the same fuzzy topology. For example, consider $\mathbb{Z}$ with a finitely generated fuzzy closure operator defined as $c(x_\lambda)= x_\lambda \vee (x+1)_\lambda$ for all $x\in \mathbb{Z}$, and the indiscrete fuzzy closure operator. Both fuzzy closure operators have the indiscrete fuzzy topology as their associated fuzzy topology.
			\begin{Definition}\cite{MASH}
				Let $(X,c)$ be an fcs and $A$ be an ordinary subset of $X$. The function $c_A: I^X \rightarrow I^X$ defined by $c_A(f)= 1_A \wedge c(f)$ is a \v{C}F-closure operator on $A$. The corresponding pair $(A,c_A)$ is said to be a \v{C}F-closure subspace of $(X,c)$. 
			\end{Definition}
			
			\begin{Definition}\cite{MASH}
				Let $c_1\text{ and }c_2$ be two fuzzy closure operators on a set $X$. Then, the fcs $(X,c_1)$ is said to be coarser than the fcs $(X,c_2)$ if $c_2(f) \leq c_1(f)$ for all $f \in I^X$ and denoted by  $c_1 \leq c_2$.
			\end{Definition}

			\begin{Definition}{\cite{MASH}}
				Let $(X,c)$ and $(Y,d)$ be fuzzy closure spaces. A mapping $\theta : X \to Y$ is called \v{C}F-continuous if 
				$\theta(c(f)) \leq d(\theta(f)),  \text{  for all } f \in I^X.$ If $\theta$ is bijective and both $\theta$ and $\theta^{-1}$ are \v{C}F-continuous, then $\theta$ is termed a \v{C}F-homeomorphism.  
				
				A fuzzy closure property refers to a structural property of fuzzy closure spaces that remains preserved under \v{C}F-homeomorphisms.
			\end{Definition}

			\begin{Definition}{\cite{MASH}}
				A fuzzy closure property is said to be hereditary; whenever a fuzzy closure space has that property, then so does every subspace of it.
			\end{Definition}
			\begin{Definition}\cite{MASH}
				Let $\mathcal{F}= \{(X_t,c_t): t \in  T\}$ be a family of pairwise disjoint fuzzy closure spaces and $X= \underset{t \in T}{\bigvee} X_t$. The function $\oplus : I^X \rightarrow I^X$, defined by, $\oplus c_t(f)=  \underset{t \in T}{\bigvee} c_t(1_{X_t} \wedge f)$ is a \v{C}F-closure operator on $X$. The pair $(X,\oplus c_t)$ is said to be the sum fuzzy closure space of the family $\mathcal{F}$.
			\end{Definition}
			
			\begin{Definition}\cite{MASH}\label{thm:prod}
				Let $\mathcal{F}= \{(X_t,c_t): t \in  T\}$ be a family of fuzzy closure spaces and $X= \underset{t \in T}{\prod} X_t$. Define a function $\otimes c_t: I^X \rightarrow I^X$ as, for a fuzzy point $x_\lambda$, and a fuzzy set $f$ in $X$, $x_\lambda \leq \otimes c_t(f)$ if the following condition is satisfied. $f= f_1 \vee f_2 \vee \cdots \vee f_n, (f_i \in I^X)$ implies there exists an $i$ such that $ P_t(x_\lambda) \leq c_t(P_t(f_i))$ for all $t$. The function $ \otimes c_t $ is a \v{C}F-closure operator on $X$. The pair $(X,\otimes c_t)$ is said to be the product fuzzy closure space of the family $\mathcal{F}$. Where $P_t$ is the projection from $X$ to $X_t$.
			\end{Definition}
			
			\begin{Theorem}\cite{MASH}
				The product fcs is the coarsest fcs for which each projection is \v{C}F-continuous. 
			\end{Theorem}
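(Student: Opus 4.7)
The plan is to prove two inequalities. First, I check that each projection $P_t:(X,\otimes c_t)\to(X_t,c_t)$ is \v{C}F-continuous, i.e.\ $P_t(\otimes c_t(f))\leq c_t(P_t(f))$ for every $f\in I^X$. Second, I show that whenever $d$ is a \v{C}F-closure operator on $X$ making every projection $P_t:(X,d)\to(X_t,c_t)$ \v{C}F-continuous, one has $d(f)\leq \otimes c_t(f)$ for every $f\in I^X$; in the coarser-than ordering this reads $\otimes c_t\leq d$, so $\otimes c_t$ is indeed the coarsest such operator.

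For the first part, fix $t$ and $f$ and take any fuzzy point $x_\lambda\leq \otimes c_t(f)$. Applying the defining condition of $\otimes c_t$ to the one-term decomposition $f=f$ forces $P_s(x_\lambda)\leq c_s(P_s(f))$ for every $s\in T$; in particular $P_t(x_\lambda)\leq c_t(P_t(f))$. Since $\otimes c_t(f)$ is the join of the fuzzy points it dominates and forward images commute with arbitrary joins, taking the join over all such $x_\lambda$ yields $P_t(\otimes c_t(f))\leq c_t(P_t(f))$.

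For the second part, take $d$ as described and a fuzzy point $x_\lambda\leq d(f)$; I verify the condition defining $x_\lambda\leq \otimes c_t(f)$. Given any finite decomposition $f=f_1\vee\cdots\vee f_n$, iterated additivity of $d$ gives $d(f)=d(f_1)\vee\cdots\vee d(f_n)$, so evaluating at the support of $x_\lambda$ yields $\lambda\leq \max_i d(f_i)(x)$. Because this maximum is attained, there is an index $i$ with $x_\lambda\leq d(f_i)$; monotonicity of the forward image together with \v{C}F-continuity of $P_t$ then gives $P_t(x_\lambda)\leq P_t(d(f_i))\leq c_t(P_t(f_i))$ for every $t$, matching the clause in Definition~\ref{thm:prod}.

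The only genuinely nontrivial point I expect is the fuzzy-point disjunction $x_\lambda\leq g\vee h\Rightarrow x_\lambda\leq g\ \text{or}\ x_\lambda\leq h$, which is special to fuzzy points (with their single-point support) and would fail for arbitrary fuzzy sets; it is precisely this property that converts the additivity of $d$ into the existential clause in the product definition. Beyond this, the proof mirrors the classical argument for product topologies being the coarsest topology making each projection continuous.
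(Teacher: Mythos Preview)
The paper does not supply a proof of this theorem: it is quoted from Mashhour and Ghanim \cite{MASH} in the preliminaries and left without argument. So there is no ``paper's own proof'' to compare against.

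That said, your argument is correct and is exactly the natural one. The first half uses the one-term decomposition $f=f$ to extract $P_t(x_\lambda)\leq c_t(P_t(f))$ directly from Definition~\ref{thm:prod}, and the passage from fuzzy points to $P_t(\otimes c_t(f))$ via the join is legitimate since forward images of fuzzy sets preserve arbitrary suprema. In the second half you correctly identify the crucial step: for a fuzzy point $x_\lambda$ the relation $x_\lambda\leq d(f_1)\vee\cdots\vee d(f_n)$ forces $x_\lambda\leq d(f_i)$ for some $i$, because the join at the support point $x$ is an honest maximum of finitely many real numbers. Combined with finite additivity of $d$ and \v{C}F-continuity of each $P_t$, this yields exactly the clause in Definition~\ref{thm:prod}, so $d(f)\leq \otimes c_t(f)$ and hence $\otimes c_t\leq d$ in the coarseness order. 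Nothing is missing.
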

			
			\section{Interior operator and Neighborhood systems}
			
			In this section, we introduce interior operators and neighborhood systems in fuzzy closure spaces and give some equivalent conditions for \v{C}F-continuity and \v{C}F-homeomorphism.
			\begin{Definition}
				An interior operator ` $\operatorname{int}$' is a function from $I^X$ to itself defined by, $\intt(f)= \mathfrak{Co}(c(\mathfrak{Co}(f)))$, for each fuzzy subset $f$ of $X$. A fuzzy subset $f$ of $X$ is said to be a neighborhood of $x_\lambda$ if $x_\lambda  \leq \intt(f)$, if so we say that $x_\lambda$ is an interior point of $f$. Furthermore, for a fuzzy subset $k $ of $ X$, $f$ is called a neighborhood of $k$ if $k \leq \operatorname{int}(f)$.
			\end{Definition}

			Mashhur and Ghanim\cite{MASH} also used $f^*=\mathfrak{Co}(c(\mathfrak{Co}(f)))$, which is same as our interior operator for studying regularity and \v{C}F-strongly continuous maps. But they haven't identified it as an interior operator. 
			\begin{example}
				We can easily find that the interior operator of indiscrete fcs is  $\intt_i(f)=\underline{0} \text{  if  } f\neq \underline{1}$, and $\intt_i(\underline{1})= \underline{1}$. 
				Similarly, the interior operator of discrete fcs is, $\intt_d(f)=f $ for all fuzzy sets $f$ in $X$.
			\end{example}

			\begin{example}
				Let $X=\{p,q,r\}$ and define a closure operator $c:I^X \to I^X$ on $X$ by
				\[
				c(f)=
				\begin{cases}
					\underline{0}, & \text{if } f=\underline{0}, \\[6pt]
					1_{\{p,q\}}, & \text{if } \underline{0} < f \leq 1_{\{p\}}, \\[6pt]
					1_{\{p,q,r\}}, & \text{otherwise}.
				\end{cases}
				\]
				The associated interior operator $\operatorname{int}:I^X \to I^X$ can be found as
				\[
				\operatorname{int}(f)=
				\begin{cases}
					\underline{1}, & \text{if } f=\underline{1}, \\[6pt]
					1_{\{r\}}, & \text{if } \underline{1} > f \geq 1_{\{q,r\}}, \\[6pt]
					\underline{0}, & \text{otherwise}.
				\end{cases}
				\]
			\end{example}

			\begin{example}
				On $\mathbb{Z}$, define a fuzzy closure operator $c:I^{\mathbb{Z}} \to I^{\mathbb{Z}}$ as follows.  
				For each fuzzy point $x_\lambda \in I^{\mathbb{Z}}$, set
				\[
				c(x_\lambda) = 1_{\{x,\,x+1\}},
				\]
				and for an arbitrary fuzzy subset $f \in I^{\mathbb{Z}}$, define
				\[
				c(f) = \bigvee_{x_\lambda \leq f} c(x_\lambda).
				\]
				Here we get, $
				\operatorname{int}(1_{\{x,\,x+1\}}) = 1_{\{x\}}$, and it follows that every neighborhood of the fuzzy point $x_\lambda$ necessarily contains $1_{\{x,\,x+1\}}$.
			\end{example}

			\begin{remark}
				If the interior operator is given, we can find the closure operator as follows: $c(f)= \mathfrak{Co}(\intt(\mathfrak{Co}(f)))$.
			\end{remark}
			Analogous to \v{C}ech closure spaces, the following results hold.

			\begin{Theorem}
				Let $(X,c)$ be a fuzzy closure space. Then, for all $f,g \in I^X$, the following properties of the fuzzy interior operator hold:
				\begin{enumerate}
					\item $\operatorname{int}(\underline{1}) = \underline{1}$,
					\item $\operatorname{int}(f) \leq f$,
					\item $\operatorname{int}(f \wedge g) = \operatorname{int}(f) \wedge \operatorname{int}(g)$, \label{eq:int3}
					\item If $f \leq g$, then $\operatorname{int}(f) \leq \operatorname{int}(g)$,
					\item A fuzzy subset $f$ of $X$ is open if and only if $\operatorname{int}(f) = f$,
					\item A fuzzy subset $f$ of $X$ is open if and only if $f$ is a neighborhood of every fuzzy point $x_\lambda \in f$.
				\end{enumerate}
			\end{Theorem}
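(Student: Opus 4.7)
The strategy is to grind each of the six properties directly from the definition $\operatorname{int}(f) = \mathfrak{Co}(c(\mathfrak{Co}(f)))$, exploiting the fact that $\mathfrak{Co}$ is an involution that interchanges $\underline{0}\leftrightarrow\underline{1}$, $\vee\leftrightarrow\wedge$, and reverses $\leq$. Most items are one-line dualisations of a closure axiom, so I expect no conceptual obstacle; the only point requiring a touch of extra care is (6), where I need to extract pointwise information from the global identity $\operatorname{int}(f)=f$.

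\textbf{Items (1)--(3).} For (1), I compute $\operatorname{int}(\underline{1})=\mathfrak{Co}(c(\underline{0}))=\mathfrak{Co}(\underline{0})=\underline{1}$ using axiom (i) of a \v{C}F-closure operator. For (2), I start from the enlargement axiom $\mathfrak{Co}(f)\leq c(\mathfrak{Co}(f))$, apply $\mathfrak{Co}$ (which is order-reversing) to get $\operatorname{int}(f)=\mathfrak{Co}(c(\mathfrak{Co}(f)))\leq \mathfrak{Co}(\mathfrak{Co}(f))=f$. For (3), I use De Morgan to write $\mathfrak{Co}(f\wedge g)=\mathfrak{Co}(f)\vee\mathfrak{Co}(g)$, then apply additivity of $c$ and De Morgan once more:
\[
\operatorname{int}(f\wedge g)=\mathfrak{Co}\bigl(c(\mathfrak{Co}(f))\vee c(\mathfrak{Co}(g))\bigr)=\mathfrak{Co}(c(\mathfrak{Co}(f)))\wedge \mathfrak{Co}(c(\mathfrak{Co}(g)))=\operatorname{int}(f)\wedge\operatorname{int}(g).
\]

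\textbf{Item (4).} First I record that $c$ is monotone: if $h\leq k$, then $k=h\vee k$, so by additivity $c(k)=c(h)\vee c(k)\geq c(h)$. Now if $f\leq g$, then $\mathfrak{Co}(g)\leq \mathfrak{Co}(f)$, hence $c(\mathfrak{Co}(g))\leq c(\mathfrak{Co}(f))$, and applying $\mathfrak{Co}$ again gives $\operatorname{int}(f)\leq\operatorname{int}(g)$. (Alternatively, (4) is a direct consequence of (3).)

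\textbf{Items (5)--(6).} For (5), unwinding the definitions in the associated fuzzy topology $\tau(c)$: $f$ is open iff $c(\mathfrak{Co}(f))=\mathfrak{Co}(f)$; applying $\mathfrak{Co}$ this is equivalent to $\operatorname{int}(f)=f$. For (6), the forward direction is immediate: if $f$ is open then $\operatorname{int}(f)=f$, so every $x_\lambda\leq f$ satisfies $x_\lambda\leq\operatorname{int}(f)$, i.e.\ $f$ is a neighborhood of $x_\lambda$. The converse is the one step that deserves attention. Assume $f$ is a neighborhood of every fuzzy point $x_\lambda\leq f$; I want $f\leq\operatorname{int}(f)$, since the reverse inequality is (2). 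Fix $x\in X$ and set $\lambda=f(x)$; if $\lambda=0$ there is nothing to check, and if $\lambda>0$ then $x_\lambda\leq f$, so by hypothesis $x_\lambda\leq \operatorname{int}(f)$, which gives $\operatorname{int}(f)(x)\geq\lambda=f(x)$. Thus $f\leq\operatorname{int}(f)$, and combined with (2) I obtain $\operatorname{int}(f)=f$, so $f$ is open by (5). This pointwise argument is the only step that is not a pure symbol push, which is why I flag it as the main (albeit small) obstacle.
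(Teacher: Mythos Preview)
Your proof is correct and follows essentially the same route as the paper: each of (1)--(3) and (5) is the identical De Morgan/dualisation computation, and your pointwise argument for the converse in (6) is exactly the paper's $f=\bigvee_{x_\lambda\in f}x_\lambda\leq\operatorname{int}(f)$ unpacked at a single point. The only cosmetic difference is in (4), where the paper deduces monotonicity directly from (3) via $f=f\wedge g$, whereas you first establish monotonicity of $c$ and dualise---but you note this alternative yourself.
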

			
			\begin{proof}
				\begin{enumerate}
					\item By definition,
					\[
					\operatorname{int}(\underline{1}) = \mathfrak{Co}(c(\mathfrak{Co}(\underline{1}))) 
					= \mathfrak{Co}(c(\underline{0})) 
					= \mathfrak{Co}(\underline{0}) 
					= \underline{1}.
					\]
					
					\item Since $\mathfrak{Co}(f) \leq c(\mathfrak{Co}(f))$, it follows that
					\[
					\operatorname{int}(f) = \mathfrak{Co}(c(\mathfrak{Co}(f))) \leq f.
					\]
					
					\item Using the properties of the complement operator, we have
					\[
					\mathfrak{Co}(f \vee g) = \mathfrak{Co}(f) \wedge \mathfrak{Co}(g), \quad 
					\mathfrak{Co}(f \wedge g) = \mathfrak{Co}(f) \vee \mathfrak{Co}(g).
					\]
					Hence,
					\[
					\begin{aligned}
						\operatorname{int}(f \wedge g) 
						&= \mathfrak{Co}(c(\mathfrak{Co}(f \wedge g))) \\
						&= \mathfrak{Co}(c(\mathfrak{Co}(f) \vee \mathfrak{Co}(g))) \\
						&= \mathfrak{Co}(c(\mathfrak{Co}(f)) \vee c(\mathfrak{Co}(g))) \\
						&= \mathfrak{Co}(c(\mathfrak{Co}(f))) \wedge \mathfrak{Co}(c(\mathfrak{Co}(g))) \\
						&= \operatorname{int}(f) \wedge \operatorname{int}(g).
					\end{aligned}
					\]
					
					\item If $f \leq g$, then $f \wedge g = f$. By property \ref{eq:int3}, 
					\[
					\operatorname{int}(f) = \operatorname{int}(f \wedge g) 
					= \operatorname{int}(f) \wedge \operatorname{int}(g) \leq \operatorname{int}(g).
					\]
					
					\item Suppose $f$ is open in $(X,c)$. Then $\mathfrak{Co}(f)$ is closed, i.e., $c(\mathfrak{Co}(f)) = \mathfrak{Co}(f)$. Thus,
					\[
					\operatorname{int}(f) = \mathfrak{Co}(c(\mathfrak{Co}(f))) = f.
					\]
					Conversely, if $\operatorname{int}(f) = f$, then
					\[
					\mathfrak{Co}(c(\mathfrak{Co}(f))) = f.
					\]
					Taking complements, we obtain $c(\mathfrak{Co}(f)) = \mathfrak{Co}(f)$. Hence, $\mathfrak{Co}(f)$ is closed, and $f$ is open.
					
					\item If $f$ is open, then $\operatorname{int}(f) = f$, so $f$ is a neighborhood of each fuzzy point contained in it. Conversely, suppose $f$ is a neighborhood of every fuzzy point $x_\lambda \in f$. Then, we have
					\[
					x_\lambda \leq \operatorname{int}(f) \leq f \quad \text{for all } x_\lambda \in f, 
					\]
					which implies,
					\[
					f=\bigvee_{x_\lambda \in f} x_\lambda \leq \operatorname{int}(f) \leq f. 
					\]
					Therefore, $\operatorname{int}(f) = f$, and hence $f$ is open. \qedhere
				\end{enumerate}
			\end{proof}

			
			Here we introduce the notion of \v{C}F-continuity at a fuzzy point and subsequently employ it to derive an equivalent criterion for \v{C}F-continuity.
			
			\begin{Definition}
				A function $\theta:(X,c) \rightarrow (Y,d)$ is said to be \v{C}F-continuous at a fuzzy point $x_\lambda$ if $ x_\lambda \in c(f)$ for some fuzzy subset $f$ of $X$, then $ \theta(x_\lambda)\leq d(\theta(f))$.
			\end{Definition}

			\begin{Theorem}
				A function $\theta:(X,c) \to (Y,d)$ is \v{C}F-continuous if and only if it is \v{C}F-continuous at every fuzzy point of $X$.
			\end{Theorem}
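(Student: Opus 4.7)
The plan is to prove the two directions separately, with the reverse implication being the more delicate one.

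For the forward direction, I would assume $\theta$ is \v{C}F-continuous in the global sense and verify the pointwise condition directly. Fix a fuzzy point $x_\lambda$ and a fuzzy subset $f$ of $X$ with $x_\lambda \leq c(f)$. Since the fuzzy image $\theta$ is monotone (immediate from its supremum definition) and $\theta(c(f)) \leq d(\theta(f))$ by hypothesis, chaining these inequalities gives $\theta(x_\lambda) \leq \theta(c(f)) \leq d(\theta(f))$, which is exactly \v{C}F-continuity at $x_\lambda$.

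For the converse, assume \v{C}F-continuity at every fuzzy point and fix an arbitrary $f \in I^X$. The key idea is to decompose $c(f)$ as the supremum of the fuzzy points it contains, namely $c(f) = \bigvee_{x_\lambda \leq c(f)} x_\lambda$. Applying $\theta$ and using the fact that the fuzzy image commutes with arbitrary suprema (again a direct consequence of its definition as a pointwise supremum), I obtain
\[
\theta(c(f)) \;=\; \bigvee_{x_\lambda \leq c(f)} \theta(x_\lambda).
\]
By the pointwise hypothesis, each $\theta(x_\lambda)$ satisfies $\theta(x_\lambda) \leq d(\theta(f))$, so the join on the right-hand side is also dominated by $d(\theta(f))$. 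This yields $\theta(c(f)) \leq d(\theta(f))$, as required.

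The main obstacle is the step in the converse that passes the supremum through $\theta$; if one is uncomfortable appealing to this as a general principle, the same argument can be done pointwise in $Y$. For $y \in Y$ with $\theta^{-1}(y) = \emptyset$ the inequality $\theta(c(f))(y) = 0 \leq d(\theta(f))(y)$ is trivial, and for $y$ with $\theta^{-1}(y) \neq \emptyset$, setting $\lambda = c(f)(x)$ for any $x \in \theta^{-1}(y)$ gives a fuzzy point $x_\lambda \leq c(f)$ (when $\lambda > 0$), so pointwise continuity delivers $\lambda = \theta(x_\lambda)(y) \leq d(\theta(f))(y)$; taking the supremum over $x \in \theta^{-1}(y)$ recovers $\theta(c(f))(y) \leq d(\theta(f))(y)$. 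Either formulation completes the proof.
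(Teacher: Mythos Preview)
Your proof is correct and follows essentially the same approach as the paper: the forward direction applies monotonicity of $\theta$ directly, and the converse decomposes $c(f)$ as the join of the fuzzy points below it and pushes $\theta$ through the supremum. Your additional pointwise verification in $Y$ is a welcome elaboration of a step the paper leaves implicit.
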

			
			\begin{proof}
				($\Rightarrow$) Suppose $\theta$ is \v{C}F-continuous. Then, by definition, 
				\[
				\theta(c(f)) \leq d(\theta(f)), \quad \forall f \in I^X.
				\]
				Let $x_\lambda$ be a fuzzy point in $X$. If $x_\lambda \in c(f)$, then 
				\[
				\theta(x_\lambda) \in \theta(c(f)) \leq d(\theta(f)).
				\]
				Thus, $\theta$ is \v{C}F-continuous at the fuzzy point $x_\lambda$.
				
				($\Leftarrow$) Conversely, assume that $\theta$ is \v{C}F-continuous at every fuzzy point of $X$. For any fuzzy subset $f \in I^X$, we have
				\[
				\theta(c(f)) = \bigvee_{x_\lambda \in c(f)} \theta(x_\lambda).
				\]
				Since $\theta$ is \v{C}F-continuous at each fuzzy point, it follows that $\theta(x_\lambda) \in d(\theta(f))$ whenever $x_\lambda \in c(f)$. Consequently,
				\[
				\theta(c(f)) = \bigvee_{x_\lambda \in c(f)} \theta(x_\lambda)\quad  \leq d(\theta(f)).
				\]
				Hence, $\theta$ is \v{C}F-continuous.
			\end{proof}

			Another characterization theorem for \v{C}F-continuous functions can be found as follows:

			\begin{Theorem}
				Let $(X,c)$ and $(Y,d)$ be fuzzy closure spaces. A mapping $\theta:(X,c) \to (Y,d)$ is \v{C}F-continuous if and only if 
				\[
				c(\theta^{-1}(g)) \leq \theta^{-1}(d(g)), \quad \text{for all } g \in I^Y.
				\]
			\end{Theorem}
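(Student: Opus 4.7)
The plan is to prove both implications by a standard substitution argument combined with the two basic fuzzy-set identities $f \leq \theta^{-1}(\theta(f))$ for all $f \in I^X$ and $\theta(\theta^{-1}(g)) \leq g$ for all $g \in I^Y$, together with monotonicity of the closure operator $c$ (which follows at once from axiom (iii): $f \leq g$ implies $c(g) = c(f \vee g) = c(f) \vee c(g)$, hence $c(f) \leq c(g)$; similarly $d$ is monotone).

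For the forward implication, I would assume $\theta$ is \v{C}F-continuous, fix $g \in I^Y$, and set $f := \theta^{-1}(g)$ in the defining inequality $\theta(c(f)) \leq d(\theta(f))$. This gives $\theta(c(\theta^{-1}(g))) \leq d(\theta(\theta^{-1}(g))) \leq d(g)$, where the second inequality uses $\theta(\theta^{-1}(g)) \leq g$ and monotonicity of $d$. Applying $\theta^{-1}$ to both sides (which preserves order) and using $c(\theta^{-1}(g)) \leq \theta^{-1}(\theta(c(\theta^{-1}(g))))$ yields the desired inequality $c(\theta^{-1}(g)) \leq \theta^{-1}(d(g))$.

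For the converse, I would assume $c(\theta^{-1}(g)) \leq \theta^{-1}(d(g))$ for every $g \in I^Y$, fix an arbitrary $f \in I^X$, and set $g := \theta(f)$. Using $f \leq \theta^{-1}(\theta(f))$ and monotonicity of $c$, I obtain $c(f) \leq c(\theta^{-1}(\theta(f))) \leq \theta^{-1}(d(\theta(f)))$. Applying $\theta$ to both sides and then invoking $\theta(\theta^{-1}(d(\theta(f)))) \leq d(\theta(f))$ gives $\theta(c(f)) \leq d(\theta(f))$, which is \v{C}F-continuity.

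There is no real obstacle here; the only subtlety is to make sure the two auxiliary facts ($f \leq \theta^{-1}(\theta(f))$ and $\theta(\theta^{-1}(g)) \leq g$) are invoked in the correct direction so that, combined with monotonicity of $c$ and $d$, each chain of inequalities closes up. If a reader is unfamiliar with these identities for fuzzy images and preimages (as defined earlier in the paper), I would briefly justify them pointwise from the definitions of $\theta(f)$ and $\theta^{-1}(g)$ before running the main argument.
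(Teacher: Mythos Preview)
Your proposal is correct and follows essentially the same substitution argument as the paper: set $f=\theta^{-1}(g)$ for the forward direction and $g=\theta(f)$ for the converse, then use monotonicity of the closure operators together with the standard inequalities $f\leq\theta^{-1}(\theta(f))$ and $\theta(\theta^{-1}(g))\leq g$. If anything, you are slightly more careful than the paper, which in the converse direction writes $\theta(\theta^{-1}(d(g)))=d(g)$ where only the inequality $\leq$ is guaranteed in general.
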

			
			\begin{proof}
				($\Rightarrow$) Suppose that $\theta:(X,c) \to (Y,d)$ is \v{C}F-continuous. By definition, we have
				\[
				\theta(c(f)) \leq d(\theta(f)), \quad \text{for all } f \in I^X.
				\]
				Taking $f = \theta^{-1}(g)$ for an arbitrary $g \in I^Y$, it follows that
				\[
				\theta(c(\theta^{-1}(g))) \leq d(\theta(\theta^{-1}(g))) \leq d(g).
				\]
				Applying $\theta^{-1}$ to both sides yields
				\[
				c(\theta^{-1}(g)) \leq \theta^{-1}(d(g)), \quad \forall g \in I^Y.
				\]
				
				($\Leftarrow$) Conversely, assume that
				\[
				c(\theta^{-1}(g)) \leq \theta^{-1}(d(g)), \quad \forall g \in I^Y.
				\]
				Let $f \in I^X$, and set $g = \theta(f)$ and $f_1 = \theta^{-1}(g)$. Then
				\[
				c(f_1) = c(\theta^{-1}(g)) \leq \theta^{-1}(d(g)).
				\]
				Since $f \leq f_1$, we obtain
				\[
				\theta(c(f)) \leq \theta(c(f_1)) \leq \theta(\theta^{-1}(d(g))) = d(g) = d(\theta(f)).
				\]
				Therefore, $\theta$ is \v{C}F-continuous.
			\end{proof}
			
			\begin{corollary}
				If $\theta:(X,c) \to (Y,d)$ is \v{C}F-continuous, then the inverse image of each open (respectively, closed) fuzzy subset of $Y$ is open (respectively, closed) in $X$.
			\end{corollary}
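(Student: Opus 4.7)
The plan is to deduce the corollary directly from the characterization theorem just proved, namely that \v{C}F-continuity is equivalent to $c(\theta^{-1}(g)) \leq \theta^{-1}(d(g))$ for every $g \in I^Y$. I would handle the closed case first, since it follows most immediately, and then reduce the open case to the closed one via complements.

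For the closed case, suppose $g \in I^Y$ is closed, so $d(g)=g$. Applying the characterization gives
\[
c(\theta^{-1}(g)) \leq \theta^{-1}(d(g)) = \theta^{-1}(g),
\]
and combining this with the general inequality $\theta^{-1}(g) \leq c(\theta^{-1}(g))$ (axiom (ii) for the \v{C}F-closure operator $c$) yields $c(\theta^{-1}(g)) = \theta^{-1}(g)$. Hence $\theta^{-1}(g)$ is closed in $X$.

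For the open case, let $g$ be open in $Y$, so $\mathfrak{Co}(g)$ is closed. The natural route is to pass to the complement: by the previous paragraph, $\theta^{-1}(\mathfrak{Co}(g))$ is closed in $X$. The remaining small step is to note the pointwise identity $\theta^{-1}(\mathfrak{Co}(g)) = \mathfrak{Co}(\theta^{-1}(g))$, which follows immediately from the definition of $\theta^{-1}$ on fuzzy sets: for each $x\in X$,
\[
\theta^{-1}(\mathfrak{Co}(g))(x) = 1 - g(\theta(x)) = 1 - \theta^{-1}(g)(x) = \mathfrak{Co}(\theta^{-1}(g))(x).
\]
Thus $\mathfrak{Co}(\theta^{-1}(g))$ is closed, so $\theta^{-1}(g)$ is open.

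There is no real obstacle here; the corollary is essentially a packaged consequence of the characterization theorem together with the standard fact that inverse images commute with fuzzy complements. The only point worth stating explicitly, to avoid a gap, is that last identity, since the definition of $\theta$ on fuzzy sets (via suprema) is not symmetric with the definition of $\theta^{-1}$, and it is the inverse image direction that behaves well with respect to $\mathfrak{Co}$.
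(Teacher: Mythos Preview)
Your proof is correct and follows essentially the same route as the paper: apply the characterization $c(\theta^{-1}(g))\leq\theta^{-1}(d(g))$ to a closed $g$ to get $c(\theta^{-1}(g))=\theta^{-1}(g)$, then handle the open case by complementation. The paper simply writes ``by duality'' for the open case, whereas you spell out the identity $\theta^{-1}(\mathfrak{Co}(g))=\mathfrak{Co}(\theta^{-1}(g))$ explicitly, which is a welcome bit of care but not a different argument.
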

			
			\begin{proof}
				Let $g$ be a closed fuzzy subset of $(Y,d)$, i.e., $d(g) = g$. By the above theorem, we have
				\[
				c(\theta^{-1}(g)) \leq \theta^{-1}(d(g)) = \theta^{-1}(g).
				\]
				Thus, $c(\theta^{-1}(g)) = \theta^{-1}(g)$, showing that $\theta^{-1}(g)$ is closed in $(X,c)$. By duality, the result also holds for open fuzzy subsets.
			\end{proof}

			\begin{remark}
				The converse of the preceding corollary does not hold in general. Consider the set 
				$X = \{p,q,r,s\}$ equipped with the fuzzy closure operator $c$ defined by
				\[
				c(f) =
				\begin{cases}
					\underline{0}, & f = \underline{0}, \\[6pt]
					1_{\{p,q\}}, & \underline{0} < f \leq 1_{\{p\}}, \\[6pt]
					1_{\{q,r\}}, & \underline{0} < f \leq 1_{\{q\}}, \\[6pt]
					1_{\{r,s\}}, & \underline{0} < f \leq 1_{\{r\}}, \\[6pt]
					1_{\{s,p\}}, & \underline{0} < f \leq 1_{\{s\}}, \\[6pt]
					\bigvee_{x_\lambda \in f} c(x_\lambda), & \text{otherwise}.
				\end{cases}
				\]
				
				Define a mapping $\theta : (X,c) \to (X,c)$ by
				\[
				\theta(p) = q, \quad \theta(q) = r, \quad \theta(r) = s, \quad \theta(s) = p.
				\]
				Since the fuzzy topology associated with $(X,c)$ is the indiscrete fuzzy topology, the inverse image of every open set under $\theta$ is again open.  
				
				However, consider $f = 1_{\{q\}}$. Then
				\[
				\theta(c(1_{\{q\}})) = \theta(1_{\{q,r\}}) = 1_{\{p,r,s\}},
				\]
				while
				\[
				c(\theta(1_{\{q\}})) = c(1_{\{r\}}) = 1_{\{r,s\}}.
				\]
				Thus,
				\[
				\theta(c(1_{\{q\}})) \nleq c(\theta(1_{\{q\}})),
				\]
				showing that $\theta$ fails to be \v{C}F-continuous. 
			\end{remark}

			\begin{Theorem}
				Let $(X,c)$ and $(Y,d)$ be fuzzy closure spaces. A bijective function $\theta : (X,c) \to (Y,d)$ is a \v{C}F-homeomorphism if and only if $\theta(c(f)) = d(\theta(f)),  \text{  for all } f \in I^X.$
			\end{Theorem}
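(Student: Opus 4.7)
The plan is to unfold the definition of \v{C}F-homeomorphism (bijectivity together with \v{C}F-continuity of both $\theta$ and $\theta^{-1}$) and then use the fact that for a bijection $\theta$, the image and preimage operators on fuzzy sets satisfy $\theta(\theta^{-1}(h)) = h$ and $\theta^{-1}(\theta(f)) = f$ for every fuzzy set.

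For the forward direction, I would start with the two inequalities coming from continuity: $\theta(c(f)) \leq d(\theta(f))$ for all $f \in I^X$ (continuity of $\theta$), and $\theta^{-1}(d(g)) \leq c(\theta^{-1}(g))$ for all $g \in I^Y$ (continuity of $\theta^{-1}$, using the characterization via preimages from the previous theorem). Substituting $g = \theta(f)$ in the second inequality and using $\theta^{-1}(\theta(f)) = f$ gives $\theta^{-1}(d(\theta(f))) \leq c(f)$, and applying the bijection $\theta$ to both sides yields $d(\theta(f)) \leq \theta(c(f))$. Combined with the reverse inequality, this produces the required equality.

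For the backward direction, assume $\theta(c(f)) = d(\theta(f))$ for all $f \in I^X$. The equality immediately gives $\theta(c(f)) \leq d(\theta(f))$, so $\theta$ is \v{C}F-continuous. To show $\theta^{-1}$ is \v{C}F-continuous, fix $g \in I^Y$ and let $f = \theta^{-1}(g)$, so $\theta(f) = g$ by bijectivity. Applying the hypothesis to this $f$ gives $\theta(c(\theta^{-1}(g))) = d(g)$, and applying $\theta^{-1}$ to both sides yields $c(\theta^{-1}(g)) = \theta^{-1}(d(g))$, so in particular $\theta^{-1}(d(g)) \leq c(\theta^{-1}(g))$, which is the inequality characterizing \v{C}F-continuity of $\theta^{-1}$.

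There is no serious obstacle here; the argument is essentially a symmetrization trick made possible by bijectivity. The only point that needs a moment of care is the repeated use of $\theta \circ \theta^{-1} = \mathrm{id}_{I^Y}$ and $\theta^{-1} \circ \theta = \mathrm{id}_{I^X}$ on fuzzy sets, which holds precisely because $\theta$ is a bijection of the underlying sets; this is what collapses the two one-sided continuity inequalities into a single equality.
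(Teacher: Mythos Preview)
Your proposal is correct and follows essentially the same route as the paper: both directions proceed by combining the definitional inequality $\theta(c(f)) \leq d(\theta(f))$ with the inequality $\theta^{-1}(d(g)) \leq c(\theta^{-1}(g))$ for $\theta^{-1}$, substituting $g=\theta(f)$ (forward) or $f=\theta^{-1}(g)$ (backward) and using bijectivity to cancel $\theta\circ\theta^{-1}$ and $\theta^{-1}\circ\theta$. One small remark: the inequality $\theta^{-1}(d(g)) \leq c(\theta^{-1}(g))$ is simply the \emph{definition} of \v{C}F-continuity applied to the map $\theta^{-1}:(Y,d)\to(X,c)$, not the preimage characterization from the preceding theorem, so your citation there is slightly misplaced, though the argument itself is fine.
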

			
			\begin{proof}
				($\Rightarrow$) Assume that $\theta : (X,c) \to (Y,d)$ is a \v{C}F-homeomorphism. Then, by definition,  
				\[
				\theta(c(f)) \leq d(\theta(f)) \quad \text{and} \quad \theta^{-1}(d(g)) \leq c(\theta^{-1}(g)),
				\]
				for all $f \in I^X$ and $g \in I^Y$.  
				Substituting $g = \theta(f)$, we obtain
				\[
				\theta^{-1}(d(\theta(f))) \leq c(\theta^{-1}(\theta(f))) = c(f),
				\]
				which implies
				\[
				d(\theta(f)) \leq \theta(c(f)), \quad \text{for all } f \in I^X.
				\]
				Combining this with $\theta(c(f)) \leq d(\theta(f))$, we conclude that
				\[
				\theta(c(f)) = d(\theta(f)), \quad \text{for all } f \in I^X.
				\]
				
				($\Leftarrow$) Conversely, suppose that $\theta(c(f)) = d(\theta(f))$ for all $f \in I^X$. Clearly, $\theta(c(f)) \leq d(\theta(f)) \text{  for all } f \in I^X$, hence $\theta$ is \v{C}F-continuous.  
				Also we have $d(\theta(f)) \leq \theta(c(f))$, replacing $f$ by $\theta^{-1}(g)$, we get
				\[
				d(\theta(\theta^{-1}(g))) \leq \theta(c(\theta^{-1}(g))),
				\]
				which simplifies to
				\[
				d(g) \leq \theta(c(\theta^{-1}(g))), \quad \text{for all } g \in I^Y.
				\]
				Therefore,
				\[
				\theta^{-1}(d(g)) \leq c(\theta^{-1}(g)), \quad \text{for all } g \in I^Y,
				\]
				showing that $\theta^{-1}$ is also \v{C}F-continuous. Thus, $\theta$ is a \v{C}F-homeomorphism.
			\end{proof}

			\section{\textbf{\v{C}F$T_0$} fuzzy closure spaces}
			
			\vspace{6pt}
			\begin{Definition}
				A fuzzy closure space $(X,c)$ is \v{C}F$T_0$ if for every two distinct fuzzy points $x_\lambda$ and $y_{\gamma}$, either $x_\lambda \in \mathfrak{Co}( c(y_\gamma))$ or $ y_\gamma \in \mathfrak{Co} (c(x_\lambda))$.
			\end{Definition}
			It can be noted that if $c$ is a fuzzy topological closure operator, then the fuzzy closure space $(X,c)$ satisfies the \v{C}F$T_0$ axiom if and only if the associated fuzzy topological space $(X, \tau(c))$ is $FT_0$.
			
			\begin{remark}
				Every subspace of a \v{C}F$T_0$ space is \v{C}F$T_0$. (That is, \v{C}F$T_0$ is a hereditary property.)
			\end{remark}
			
			\begin{remark}
				Let $c_1$ and $c_2$ be two fuzzy closure operators on $X$ such that $c_1 \leq c_2$. If $(X,c_1)$ is \v{C}F$T_0$, then $(X,c_2)$ is also \v{C}F$T_0$.
			\end{remark}

			\begin{Theorem}
				If $(X, \tau(c))$ is $FT_0$, then $(X,c)$ is \v{C}F$T_0$.
			\end{Theorem}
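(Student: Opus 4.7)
The plan is to unfold both definitions and chase the containments. Suppose $(X,\tau(c))$ is $FT_0$ and let $x_\lambda, y_\gamma$ be distinct fuzzy points of $X$. Applying the $FT_0$ property in $(X,\tau(c))$ produces an open fuzzy set $f \in \tau(c)$ such that either
\[
x_\lambda \in f \leq \mathfrak{Co}(y_\gamma) \quad \text{or} \quad y_\gamma \in f \leq \mathfrak{Co}(x_\lambda).
\]
By symmetry it suffices to handle the first alternative and show that $x_\lambda \in \mathfrak{Co}(c(y_\gamma))$.

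From $f \leq \mathfrak{Co}(y_\gamma)$ I take complements to obtain $y_\gamma \leq \mathfrak{Co}(f)$. The key observation is that $f \in \tau(c)$ means exactly that $\mathfrak{Co}(f)$ is closed in $(X,c)$, i.e.\ $c(\mathfrak{Co}(f)) = \mathfrak{Co}(f)$. Applying the (easily verified) monotonicity of $c$ — which follows from axiom (iii) because $g \leq h$ implies $c(h) = c(g \vee h) = c(g) \vee c(h) \geq c(g)$ — to $y_\gamma \leq \mathfrak{Co}(f)$ gives
\[
c(y_\gamma) \leq c(\mathfrak{Co}(f)) = \mathfrak{Co}(f).
\]
Taking complements once more yields $f \leq \mathfrak{Co}(c(y_\gamma))$, and since $x_\lambda \leq f$ by hypothesis, we conclude $x_\lambda \in \mathfrak{Co}(c(y_\gamma))$.

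The symmetric alternative $y_\gamma \in f \leq \mathfrak{Co}(x_\lambda)$ is handled identically and delivers $y_\gamma \in \mathfrak{Co}(c(x_\lambda))$, so in either case one of the two defining conditions of \v{C}F$T_0$ is met. There is no real obstacle here; the only thing to be careful about is silently invoking monotonicity of $c$, which is not listed among the axioms but follows from finite additivity, so it is worth noting explicitly in the write-up.
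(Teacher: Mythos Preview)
Your proof is correct and follows essentially the same line as the paper's: both pick an open $f\in\tau(c)$ separating the two fuzzy points, use that $\mathfrak{Co}(f)$ is closed to trap $c(y_\gamma)$ inside $\mathfrak{Co}(f)$, and then pass back to $x_\lambda\in\mathfrak{Co}(c(y_\gamma))$. Your write-up is in fact slightly more careful, since you explicitly justify the monotonicity of $c$ and mention the symmetric alternative, whereas the paper treats only one case and uses monotonicity tacitly.
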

			
			\begin{proof}
				Assume that $(X, \tau(c))$ is $FT_0$, and let $x_\lambda, y_\gamma \in I^X$ be two fuzzy points.  
				Since $(X, \tau(c))$ is $FT_0$, there exists an $f \in \tau(c)$ such that 
				\[
				x_\lambda \in f \quad \text{and} \quad f \leq \mathfrak{Co}(y_\gamma).
				\]
				This yields
				\[
				\mathfrak{Co}(x_\lambda) \geq \mathfrak{Co}(f) \geq y_\gamma.
				\]
				
				Moreover, as $f$ is open, $\mathfrak{Co}(f)$ is closed. Thus, if $y_\gamma \leq \mathfrak{Co}(f)$, then
				\[
				y_\gamma \leq c(y_\gamma) \leq \mathfrak{Co}(f).
				\]
				Consequently,
				\[
				\mathfrak{Co}(x_\lambda) \geq \mathfrak{Co}(f) \geq c(y_\gamma) \geq y_\gamma.
				\]
				From this, we can deduce that $x_\lambda \in \Co(c(y_\gamma ))$.
				Hence $(X,c)$ satisfies the \v{C}F$T_0$ property.
			\end{proof}

			\begin{remark}
				The converse of the preceding theorem does not hold in general.  
				Indeed, consider the following example: 
				
				Let $X=\{x,y,z\}$ and define a finitely generated fuzzy closure operator $c$ on $X$ by 
				\[
				c(\underline{0})=\underline{0}, \quad 
				c(x_\lambda)=1_{\{x,y\}}, \quad 
				c(y_\gamma)=1_{\{y,z\}}, \quad 
				c(z_\rho)=1_{\{z,x\}} \quad \text{for all } 0<\lambda,\gamma,\rho\leq 1,
				\]
				and for any other fuzzy subset $f$ of $X$, define
				\[
				c(f)=\bigvee_{u_\lambda \in f} c(u_\lambda).
				\]
				
				It is straightforward to verify that $c$ is a fuzzy closure operator, although it is not induced by any fuzzy topology.  
				Indeed, we observe that
				\[
				x_\lambda \in \mathfrak{Co}(c(y_\gamma))=x_1, \quad 
				z_\rho \in \mathfrak{Co}(c(x_\lambda))=z_1, \quad 
				y_\gamma \in \mathfrak{Co}(c(z_\rho))=y_1.
				\]
				Hence, the fuzzy closure space $(X,c)$ satisfies the \v{C}F$T_0$ separation axiom. However,  $	\tau(c)=\{\underline{0}, \underline{1}\},$
				the indiscrete fuzzy topology, which is not $FT_0$.  
			\end{remark}

			\begin{Theorem}
				The property \v{C}F$T_0$ is preserved under \v{C}F-Homeomorphisms; that is, \v{C}F$T_0$ is a fuzzy closure property.
			\end{Theorem}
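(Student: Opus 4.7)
The plan is to take an arbitrary \v{C}F-homeomorphism $\theta \colon (X,c) \to (Y,d)$ with $(X,c)$ satisfying \v{C}F$T_0$, and verify the same property for $(Y,d)$. I would start by fixing two distinct fuzzy points $y^1_\lambda$ and $y^2_\gamma$ in $Y$. Because $\theta$ is a bijection, the supports pull back to distinct elements $x^1 = \theta^{-1}(y^1)$ and $x^2 = \theta^{-1}(y^2)$ of $X$, producing distinct fuzzy points $x^1_\lambda$ and $x^2_\gamma$ that satisfy $\theta(x^i_\mu) = y^i_\mu$. This last identity is an immediate consequence of the image formula for fuzzy sets: since $\theta$ is injective, the supremum defining $\theta(x_\mu)(z)$ collapses to a single term, forcing the image to be the fuzzy point with support $\theta(x)$ and value $\mu$. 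Applying the \v{C}F$T_0$ hypothesis in $X$ to $x^1_\lambda$ and $x^2_\gamma$ yields one of the two alternatives, say $x^1_\lambda \leq \mathfrak{Co}(c(x^2_\gamma))$; the other case is completely symmetric.

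Next I would transport this inequality across $\theta$. By the characterization theorem proved immediately before, $\theta(c(f)) = d(\theta(f))$ for every $f \in I^X$, so in particular $\theta(c(x^2_\gamma)) = d(y^2_\gamma)$. A short pointwise computation, which I would record as a brief lemma, shows that for a bijection $\theta$ the image operator commutes with $\mathfrak{Co}$; explicitly, $\theta(\mathfrak{Co}(h))(y) = 1 - h(\theta^{-1}(y)) = \mathfrak{Co}(\theta(h))(y)$ for every $h \in I^X$ and $y \in Y$. Since $\theta$ is order-preserving, applying it to $x^1_\lambda \leq \mathfrak{Co}(c(x^2_\gamma))$ gives $y^1_\lambda = \theta(x^1_\lambda) \leq \theta(\mathfrak{Co}(c(x^2_\gamma))) = \mathfrak{Co}(\theta(c(x^2_\gamma))) = \mathfrak{Co}(d(y^2_\gamma))$, which is exactly the \v{C}F$T_0$ condition for the pair $y^1_\lambda, y^2_\gamma$ in $(Y,d)$.

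The argument is essentially a mechanical transfer once the identity $\theta \circ c = d \circ \theta$ for \v{C}F-homeomorphisms is in hand, so I do not anticipate a genuine obstacle. The one spot that deserves attention is the small bookkeeping lemma that bijective $\theta$ commutes with $\mathfrak{Co}$ and sends fuzzy points to fuzzy points with the same value; without those observations the closure-complement-closure expression in the definition of \v{C}F$T_0$ cannot be moved across $\theta$ cleanly. Given how closely the proof tracks the corresponding classical argument, I would keep it short and let those two pointwise identities do all of the work.
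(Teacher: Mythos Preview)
Your proposal is correct and follows essentially the same route as the paper's proof: pull back the distinct fuzzy points along the bijection $\theta$, apply \v{C}F$T_0$ in $(X,c)$, and push the resulting inequality forward using the identity $\theta(c(f))=d(\theta(f))$ together with the fact that a bijective $\theta$ commutes with $\mathfrak{Co}$. The only difference is that you spell out the two small bookkeeping facts (images of fuzzy points under injections, and $\theta\circ\mathfrak{Co}=\mathfrak{Co}\circ\theta$) which the paper uses tacitly.
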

			
			\begin{proof}
				Let $\theta:(X,c) \to (Y,d)$ be a \v{C}F-homeomorphism, and assume that $(X,c)$ is \v{C}F$T_0$.  
				Consider two distinct fuzzy points $y^1_\lambda$ and $y^2_\gamma$ in $(Y,d)$. Then there exist $x^1, x^2 \in X$ such that $\theta(x^1) = y^1$ and $\theta(x^2) = y^2$.  
				
				Since $(X,c)$ is \v{C}F$T_0$, we may assume that  $x^1_\lambda \in \mathfrak{Co}_X(c(x^2_\gamma)).	$
				Applying $\theta$, we obtain
				\[
				y^1_\lambda \in \theta\big(\mathfrak{Co}_X(c(x^2_\gamma))\big)
				= \mathfrak{Co}_Y(\theta(c(x^2_\gamma))) 
				= \mathfrak{Co}_Y(d(\theta(x^2_\gamma))) 
				= \mathfrak{Co}_Y(d(y^2_\gamma)).
				\]
				Thus, $(Y,d)$ also satisfies the \v{C}F$T_0$ condition. Therefore, \v{C}F$T_0$ is a fuzzy closure property.
			\end{proof}

			\begin{Theorem}
				A fuzzy closure space $(X,c)$ is \v{C}F$T_0$ if and only if for every pair of distinct fuzzy points $x_\lambda$ and $y_\gamma$, there exists an $f \in I^X$ such that either 
				\[
				x_\lambda \in \operatorname{int}(f) \ \text{ and } \ y_\gamma \in \mathfrak{Co}(f), 
				\quad \text{or} \quad 
				y_\gamma \in \operatorname{int}(f) \ \text{ and } \ x_\lambda \in \mathfrak{Co}(f).
				\]
			\end{Theorem}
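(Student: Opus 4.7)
The plan is to prove both directions by directly manipulating the definition $\operatorname{int}(f) = \mathfrak{Co}(c(\mathfrak{Co}(f)))$ and choosing the witness $f$ cleverly, without needing idempotency of $c$.

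For the forward direction, assume $(X,c)$ is \v{C}F$T_0$ and let $x_\lambda, y_\gamma$ be distinct fuzzy points. Without loss of generality, $x_\lambda \in \mathfrak{Co}(c(y_\gamma))$. The naive candidate $f=\mathfrak{Co}(c(y_\gamma))$ is tempting but fails: since we do not have idempotency, $\operatorname{int}(f)=\mathfrak{Co}(c(c(y_\gamma)))$ may be strictly smaller than $\mathfrak{Co}(c(y_\gamma))$, so we cannot recover $x_\lambda\leq\operatorname{int}(f)$. The right choice is $f=\mathfrak{Co}(y_\gamma)$. Then $\mathfrak{Co}(f)=y_\gamma$, giving $y_\gamma\in\mathfrak{Co}(f)$ trivially, and by definition $\operatorname{int}(f)=\mathfrak{Co}(c(y_\gamma))$, which contains $x_\lambda$ by hypothesis. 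The symmetric case is handled identically with $f=\mathfrak{Co}(x_\lambda)$.

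For the backward direction, suppose for distinct $x_\lambda,y_\gamma$ there exists $f\in I^X$ with, say, $x_\lambda\in\operatorname{int}(f)$ and $y_\gamma\in\mathfrak{Co}(f)$. From $y_\gamma\leq\mathfrak{Co}(f)$, monotonicity of $c$ (a direct consequence of axiom (iii) of a \v{C}F-closure operator) gives $c(y_\gamma)\leq c(\mathfrak{Co}(f))$, and taking complements yields
\[
\operatorname{int}(f)=\mathfrak{Co}(c(\mathfrak{Co}(f)))\leq \mathfrak{Co}(c(y_\gamma)).
\]
Chaining with $x_\lambda\leq\operatorname{int}(f)$ produces $x_\lambda\in\mathfrak{Co}(c(y_\gamma))$, which is exactly the \v{C}F$T_0$ condition. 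The other case (roles of $x_\lambda,y_\gamma$ swapped) is symmetric.

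The main obstacle is the forward direction: idempotency fails in general for \v{C}ech fuzzy closure operators, so the standard topological trick of choosing $f$ as the complement of a closure does not produce an open set whose interior recovers itself. The remedy — which is the sole real idea in the proof — is to pick $f$ so that $\mathfrak{Co}(f)$ is already the fuzzy point we want to separate from, which makes the computation of $\operatorname{int}(f)$ collapse exactly to the expression appearing in the \v{C}F$T_0$ definition. Everything else is monotonicity and complementation.
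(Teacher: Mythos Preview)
Your proof is correct and follows essentially the same approach as the paper: in the forward direction you choose $f=\mathfrak{Co}(y_\gamma)$ so that $\operatorname{int}(f)=\mathfrak{Co}(c(y_\gamma))$ directly, and in the backward direction you use monotonicity of $c$ and complementation to pass from $y_\gamma\leq\mathfrak{Co}(f)$ to $x_\lambda\in\operatorname{int}(f)\leq\mathfrak{Co}(c(y_\gamma))$. Your added commentary on why the naive choice $f=\mathfrak{Co}(c(y_\gamma))$ fails without idempotency is a helpful clarification not present in the paper, but the core argument is the same.
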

			
			\begin{proof}
				($\Rightarrow$) Suppose $(X,c)$ is \v{C}F$T_0$, and let $x_\lambda$ and $y_\gamma$ be two distinct fuzzy points in $X$. By the \v{C}F$T_0$ property, we may assume 
				\[
				x_\lambda \in \mathfrak{Co}(c(y_\gamma)) = \mathfrak{Co}\big(c(\mathfrak{Co}(\mathfrak{Co}(y_\gamma)))\big).
				\]
				Let $f = \mathfrak{Co}(y_\gamma)$. Then $y_\gamma \in \mathfrak{Co}(f)$, and since $x_\lambda \in \mathfrak{Co}(c(\mathfrak{Co}(f))) = \operatorname{int}(f)$, we obtain the desired condition.
				
				\medskip
				
				($\Leftarrow$) Conversely, assume that for distinct fuzzy points $x_\lambda$ and $y_\gamma$ there exists $f \in I^X$ such that $x_\lambda \in \operatorname{int}(f)$ and $y_\gamma \in \mathfrak{Co}(f)$. Since $y_\gamma \leq \mathfrak{Co}(f)$, we have
				$c(y_\gamma) \leq c(\mathfrak{Co}(f))$ which implies $ \mathfrak{Co}(c(y_\gamma)) \geq \mathfrak{Co}(c(\mathfrak{Co}(f))).$
				But $\operatorname{int}(f) = \mathfrak{Co}(c(\mathfrak{Co}(f)))$, and hence
				\[
				x_\lambda \in \operatorname{int}(f) \leq \mathfrak{Co}(c(y_\gamma)).
				\]
				Therefore, the \v{C}F$T_0$ condition holds.
			\end{proof}

			The following theorems show the additive and productive properties of the \v{C}F$T_0$ separation axiom.

			\begin{Theorem}
				Let $\mathcal{F}= \{(X_t,c_t): t \in T\}$ be a family of pairwise disjoint fuzzy closure spaces. Let $(X,c) =\big( \underset{t \in T}{\bigvee} X_t, \oplus c_t\big)$ denote their sum. Then $(X,c)$ is \v{C}F$T_0$ if and only if each $(X_t,c_t)$ is \v{C}F$T_0$.
			\end{Theorem}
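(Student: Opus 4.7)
The plan is to prove the biconditional by treating the two directions separately, with the forward implication being essentially immediate from a previously recorded property and the reverse requiring a case analysis on where the supports of the fuzzy points lie.

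For the forward direction, I would simply invoke the earlier remark that $\v{C}FT_0$ is hereditary. Each $(X_t, c_t)$ is the $\v{C}F$-closure subspace of $(X, \oplus c_t)$ corresponding to the ordinary subset $X_t \subseteq X$, because for any fuzzy subset $f$ with support in $X_t$ we have $1_{X_t} \wedge (\oplus c_s)(f) = 1_{X_t} \wedge \bigvee_s c_s(1_{X_s}\wedge f) = c_t(f)$ by the pairwise disjointness of the $X_s$. Hence if the sum is $\v{C}FT_0$, each summand inherits the property.

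For the reverse direction, let $x_\lambda$ and $y_\gamma$ be distinct fuzzy points in $X$, and suppose each $(X_t,c_t)$ is $\v{C}FT_0$. The first key computation I would carry out is: for a fuzzy point $x_\lambda$ whose support $x$ lies in $X_t$, we have $(\oplus c_s)(x_\lambda) = c_t(x_\lambda)$, viewed as a fuzzy set on $X$ that vanishes outside $X_t$. This follows because $1_{X_s} \wedge x_\lambda = \underline{0}$ for $s\neq t$, while $1_{X_t}\wedge x_\lambda = x_\lambda$. I would then split into two cases. If the supports of $x_\lambda$ and $y_\gamma$ lie in different summands $X_s$ and $X_t$, then $c(x_\lambda)$ vanishes on $X_t$, so $y_\gamma \leq 1_{X_t} \leq \mathfrak{Co}(c(x_\lambda))$, and the condition holds automatically. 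If both supports lie in the same $X_t$, I would apply the $\v{C}FT_0$ property of $(X_t, c_t)$ to get, without loss of generality, $x_\lambda \in \mathfrak{Co}_{X_t}(c_t(y_\gamma))$ within $X_t$, and then extend this inequality to $X$ using the observation above together with the fact that $c(y_\gamma)$ is zero outside $X_t$, so $\mathfrak{Co}_X(c(y_\gamma))$ equals $1$ outside $X_t$ and equals $\mathfrak{Co}_{X_t}(c_t(y_\gamma))$ on $X_t$.

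The main obstacle, though minor, is keeping the bookkeeping clean between fuzzy sets on $X_t$ and their extensions by zero to $X$, particularly when applying $\mathfrak{Co}$ to the sum closure (since $\mathfrak{Co}$ behaves very differently inside versus outside $X_t$). Once that identification is written down carefully the rest is a routine verification, and no difficulty arises from the case where the supports lie in different summands since the closure operator cannot propagate a fuzzy point across the disjoint pieces.
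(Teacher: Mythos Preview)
Your proposal is correct and essentially follows the paper's approach. The reverse direction (each summand $\v{C}FT_0$ implies the sum is $\v{C}FT_0$) is handled identically: compute $(\oplus c_s)(x_\lambda)=c_t(x_\lambda)$ for $x\in X_t$ and split into the two cases according to whether the supports lie in the same or different summands. The only difference is in the forward direction: the paper argues directly by taking two distinct fuzzy points in $X_{t_1}$, regarding them as fuzzy points of $X$, applying the $\v{C}FT_0$ property of the sum, and reducing back via $c(x_\lambda)=c_{t_1}(x_\lambda)$; you instead invoke the earlier hereditary remark after verifying that $(X_t,c_t)$ is the $\v{C}F$-closure subspace of the sum on $X_t$. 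Both arguments amount to the same computation, so this is a cosmetic rather than substantive difference.
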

			
			\begin{proof}
				($\Rightarrow$) Assume that each $(X_t,c_t)$ is \v{C}F$T_0$. Let $x_\lambda$ and $y_\gamma$ be two distinct fuzzy points in $I^X$. We consider two cases:
				
				\textit{Case (i):} Suppose $x,y \in X_{t_1}$ for some $t_1 \in T$. Since $(X_{t_1},c_{t_1})$ is \v{C}F$T_0$, either 
				\[
				x_\lambda \in \mathfrak{Co}_{X_{t_1}}(c_{t_1}(y_\gamma)) \quad \text{or} \quad y_\gamma \in \mathfrak{Co}_{X_{t_1}}(c_{t_1}(x_\lambda)).
				\]
				Now, note that
				\[
				c(x_\lambda) = \oplus c_t(x_\lambda) = \bigvee_{t \in T} c_t(x_\lambda \wedge 1_{X_t}) = c_{t_1}(x_\lambda),
				\]
				and similarly $c(y_\gamma) = c_{t_1}(y_\gamma)$. Hence, it follows that either 
				\[
				x_\lambda \in \mathfrak{Co}_X(c(y_\gamma)) \quad \text{or} \quad y_\gamma \in \mathfrak{Co}_X(c(x_\lambda)).
				\]
				
				\textit{Case (ii):} Suppose $x \in X_{t_1}$ and $y \in X_{t_2}$ with $t_1 \neq t_2$. Then, by construction,
				\[
				x_\lambda \in \mathfrak{Co}(1_{X_{t_2}}) \leq \mathfrak{Co}(c(y_\gamma)) \quad \text{and} \quad y_\gamma \in \mathfrak{Co}(1_{X_{t_1}}) \leq \mathfrak{Co}(c(x_\lambda)).
				\]
				Thus, $(X,c)$ is \v{C}F$T_0$.
				
				\medskip
				
				($\Leftarrow$) Conversely, suppose that $(X,c)$ is \v{C}F$T_0$. Let $x_\lambda, y_\gamma$ be two distinct fuzzy points in $I^{X_{t_1}}$ for some $t_1 \in T$. Considering $x_\lambda$ and $y_\gamma$ as fuzzy points of $(X,c)$, the \v{C}F$T_0$ property ensures that
				\[
				x_\lambda \in \mathfrak{Co}(c(y_\gamma)) \quad \text{or} \quad y_\gamma \in \mathfrak{Co}(c(x_\lambda)).
				\]
				Since
				\[
				c(x_\lambda) = \oplus c_t(x_\lambda) = c_{t_1}(x_\lambda), \quad c(y_\gamma) = \oplus c_t(y_\gamma) = c_{t_1}(y_\gamma),
				\]
				it follows that either 
				\[
				x_\lambda \in \mathfrak{Co}_{X_{t_1}}(c_{t_1}(y_\gamma)) \quad \text{or} \quad y_\gamma \in \mathfrak{Co}_{X_{t_1}}(c_{t_1}(x_\lambda)).
				\]
				Therefore, $(X_{t_1},c_{t_1})$ is \v{C}F$T_0$ for all $t_1 \in T$.
			\end{proof}

			\begin{lemma}\label{thm:prod2}
				Let $\mathcal{F} = \{(X_t,c_t): t \in T\}$ be a family of fuzzy closure spaces, and let $(X,c) = \big(\underset{t \in T}{\prod} X_t, \otimes c_t\big)$ denote their product. For $x \in \underset{t \in T}{\prod} X_t$, let $x^t$ denote its $t^{\text{th}}$ coordinate. Then, for any fuzzy point $x_\lambda$, its closure in the product space is given by
				\begin{center}
					$
					\otimes c_t(x_\lambda) = \underset{t \in T}{\prod} c_t(x^t_\lambda).$
					
				\end{center}
			\end{lemma}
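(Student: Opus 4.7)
The plan is to verify the identity by checking, for every fuzzy point $y_\mu$ in $X$, that $y_\mu \leq \otimes c_t(x_\lambda)$ if and only if $y_\mu \leq \prod_{t\in T} c_t(x^t_\lambda)$, where the right-hand side denotes the fuzzy subset of $X$ whose value at any $z\in X$ is $\inf_{t} c_t(x^t_\lambda)(z^t)$. Since every fuzzy set is the supremum of the fuzzy points it dominates, this pointwise check gives the desired equality.

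The key preliminary observation is that any finite decomposition $x_\lambda = f_1\vee f_2 \vee \cdots \vee f_n$ of a fuzzy point must have some index $i_0$ with $x_\lambda \leq f_{i_0}$. Indeed, each $f_i \leq x_\lambda$ is supported on $\{x\}$, and the supremum of their values at $x$ equals $\lambda$, so some $f_{i_0}(x) = \lambda$. This collapses the otherwise awkward quantification over all decompositions in Definition~\ref{thm:prod}.

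For the forward direction, if $y_\mu \leq \otimes c_t(x_\lambda)$, applying the definition to the trivial decomposition $x_\lambda = x_\lambda$ yields $P_t(y_\mu) \leq c_t(P_t(x_\lambda))$ for every $t$. Using $P_t(x_\lambda) = x^t_\lambda$ and $P_t(y_\mu) = y^t_\mu$, this reads $\mu \leq c_t(x^t_\lambda)(y^t)$ for every $t$, which is exactly $y_\mu \leq \prod_{t\in T} c_t(x^t_\lambda)$. Conversely, assume $y_\mu \leq \prod_{t\in T} c_t(x^t_\lambda)$, so $\mu \leq c_t(x^t_\lambda)(y^t)$ for all $t$. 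Given any decomposition $x_\lambda = f_1\vee\cdots\vee f_n$, pick $i_0$ with $x_\lambda \leq f_{i_0}$; then $x^t_\lambda = P_t(x_\lambda) \leq P_t(f_{i_0})$, and monotonicity of $c_t$ gives $c_t(x^t_\lambda) \leq c_t(P_t(f_{i_0}))$. Hence $P_t(y_\mu) = y^t_\mu \leq c_t(P_t(f_{i_0}))$ for all $t$, so the defining condition for $y_\mu \leq \otimes c_t(x_\lambda)$ is met.

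The main obstacle is cosmetic rather than substantive: one has to parse the universally-quantified definition of $\otimes c_t$ on arbitrary fuzzy subsets and realize that, when applied to a fuzzy point, all decompositions are dominated by the trivial one. Once that reduction is made, the equivalence becomes an immediate consequence of the monotonicity of the projections $P_t$ and of each $c_t$.
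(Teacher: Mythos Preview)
Your proof is correct and follows essentially the same route as the paper's: both directions hinge on the observation that any finite decomposition $x_\lambda = f_1 \vee \cdots \vee f_n$ of a fuzzy point has some $f_{i_0}$ equal to $x_\lambda$, which reduces the universal quantification in Definition~\ref{thm:prod} to the trivial decomposition. The only cosmetic difference is that you phrase the key step as $x_\lambda \leq f_{i_0}$ and then invoke monotonicity of $c_t$, whereas the paper notes directly that $f_{i_0} = x_\lambda$ and so $P_t(f_{i_0}) = x^t_\lambda$; the content is the same.
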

			
			\begin{proof}
				Suppose $y_\gamma \in \otimes c_t(x_\lambda)$. By Definition \ref{thm:prod}, this implies that for every $t \in T$ we have
				\[
				P_t(y_\gamma) = y^t_\gamma \in c_t(x^t_\lambda).
				\]
				Hence,
				\begin{center}
					$y_\gamma \in \underset{t \in T}{\prod} c_t(x^t_\lambda).$
				\end{center}
				
				Conversely, assume $z_\rho \in \underset{t \in T}{\prod} c_t(x^t_\lambda)$. Then, for each $t \in T$, we have
				\[
				z_\rho^t \in c_t(x^t_\lambda).
				\]
				If $x_\lambda$ can be expressed as a finite join,
				$
				x_\lambda = f_1 \vee f_2 \vee \cdots \vee f_n,	$
				then, since the join is finite, at least one $f_i = x_\lambda$. Consequently,
				\[
				P_t(z_\rho) = z_\rho^t \in c_t(x^t_\lambda) = c_t(P_t(f_i)) \quad \text{for all } t \in T.
				\]
				Therefore,
				\[
				z_\rho \in \otimes c_t(x_\lambda).
				\]
				
				Thus, we conclude that
				\begin{center}
					$\otimes c_t(x_\lambda) = \underset{t \in T}{\prod} c_t(x^t_\lambda).$ \quad \quad  \qedhere
				\end{center}
			\end{proof}

			\begin{Theorem}
				Let $\mathcal{F}= \{(X_t,c_t): t \in T\}$ be a family of fuzzy closure spaces. Then the product space $(X,c) = \big(\underset{t \in T}{\prod}X_t, \otimes c_t\big)$ is \v{C}F$T_0$ whenever each factor space $(X_t,c_t)$ is \v{C}F$T_0$.
			\end{Theorem}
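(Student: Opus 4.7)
The plan is to reduce the product separation to the coordinate separation via Lemma \ref{thm:prod2}. First I would fix two distinct fuzzy points $x_\lambda$ and $y_\gamma$ in the product space $(X, \otimes c_t)$. By the definition of distinct fuzzy points, their supports differ, i.e., $x \neq y$ as elements of $\prod X_t$, and hence there exists an index $t_0 \in T$ such that $x^{t_0} \neq y^{t_0}$. This lets me form the distinct fuzzy points $x^{t_0}_\lambda$ and $y^{t_0}_\gamma$ in the coordinate space $(X_{t_0}, c_{t_0})$.

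Next, using the hypothesis that $(X_{t_0}, c_{t_0})$ is \v{C}F$T_0$, I may assume without loss of generality that $x^{t_0}_\lambda \in \mathfrak{Co}(c_{t_0}(y^{t_0}_\gamma))$, which means $c_{t_0}(y^{t_0}_\gamma)(x^{t_0}) \leq 1 - \lambda$. I then invoke Lemma \ref{thm:prod2} to write
\[
\otimes c_t(y_\gamma) = \prod_{t \in T} c_t(y^t_\gamma),
\]
so that, evaluating at the point $x \in \prod X_t$, the value of $\otimes c_t(y_\gamma)$ at $x$ is bounded above by $c_{t_0}(y^{t_0}_\gamma)(x^{t_0})$, i.e., by $1-\lambda$. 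Taking complements gives $\mathfrak{Co}(\otimes c_t(y_\gamma))(x) \geq \lambda$, i.e., $x_\lambda \in \mathfrak{Co}(c(y_\gamma))$, which is the required \v{C}F$T_0$ condition.

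The main obstacle is interpreting the product fuzzy set $\prod_{t \in T} c_t(y^t_\gamma)$ appearing in Lemma \ref{thm:prod2} correctly: I need to read off, from the proof of that lemma, that $z_\rho \leq \prod_t g_t$ is equivalent to $z^t_\rho \leq g_t$ for every $t$, so that the pointwise value of the product at $z$ is $\inf_t g_t(z^t)$. Once this identification is made explicit, the inequality $(\otimes c_t(y_\gamma))(x) \leq c_{t_0}(y^{t_0}_\gamma)(x^{t_0})$ is immediate since the infimum is at most any particular term. Apart from this, the argument is essentially bookkeeping: pick the separating coordinate, push the coordinate \v{C}F$T_0$ witness up through the product closure formula, and take complements.
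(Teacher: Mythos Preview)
Your proof is correct and follows essentially the same approach as the paper: both arguments hinge on Lemma~\ref{thm:prod2} to express $\otimes c_t(y_\gamma)$ as the coordinatewise product $\prod_t c_t(y^t_\gamma)$ and then exploit the \v{C}F$T_0$ property of a single factor. The paper argues by contradiction (assuming the product fails \v{C}F$T_0$ and deriving failure in every coordinate), whereas you give the direct version, explicitly isolating the index $t_0$ where the supports differ and pushing the coordinate separation up through the infimum; the two are contrapositives of each other.
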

			
			\begin{proof}
				Assume that each $(X_t,c_t)$ is \v{C}F$T_0$. Suppose, for the sake of contradiction, that the product space $(X,c)$ is not \v{C}F$T_0$. Then there exist distinct fuzzy points $x_\lambda, y_\gamma \in X$ such that
				\[
				x_\lambda \notin \mathfrak{Co}_X(\otimes c_t(y_\gamma)) \quad \text{and} \quad y_\gamma \notin \mathfrak{Co}_X(\otimes c_t(x_\lambda)).
				\]
				
				This implies that
				\[
				\lambda > 1 - \otimes c_t(y_\gamma)(x),
				\]
				and hence
				\[
				\otimes c_t(y_\gamma)(x) > 1 - \lambda.
				\]
				From Lemma \ref{thm:prod2}, we have $
				\otimes c_t(y_\gamma)= \underset{t \in T}{\prod} c_t(y^t_\gamma),$
				which implies that
				\[
				c_t(y^t_\gamma)(x^t) > 1 - \lambda \quad \text{for all } t \in T.
				\]
				Equivalently,
				\[
				\lambda > 1 - c_t(y^t_\gamma)(x^t) \quad \text{for all } t \in T,
				\]
				which yields
				\[
				x^t_\lambda \notin \mathfrak{Co}_{X_t}(c_t(y^t_\gamma)) \quad \text{for all } t \in T.
				\]
				By a similar argument, we also obtain
				\[
				y^t_\gamma \notin \mathfrak{Co}_{X_t}(c_t(x^t_\lambda)) \quad \text{for all } t \in T.
				\]
				
				This contradicts the assumption that each $(X_t,c_t)$ is \v{C}F$T_0$. Therefore, the assumption that $(X,c)$ is not \v{C}F$T_0$ must be false. Hence, the product space $\big(\underset{t \in T}{\prod}X_t, \otimes c_t\big)$ is \v{C}F$T_0$.
			\end{proof}

			\section{\v{C}F$T_1$ fuzzy closure spaces}
			\vspace{6pt}
			\begin{Definition}
				A fuzzy closure space is \v{C}F$T_1$ if for every two distinct fuzzy points $x_\lambda, y_{\gamma} \in I^X$,  $x_\lambda \in \mathfrak{Co}( c(y_\gamma))$ and $ y_\gamma \in \mathfrak{Co} (c(x_\lambda))$.
			\end{Definition}
			Clearly, every $FT_1$ fuzzy topological space can be viewed as a \v{C}F$T_1$ fuzzy closure space.

			\begin{remark}
				The \v{C}F$T_1$ separation property is hereditary. Additionally, if $c_1$ and $c_2$ are fuzzy closure operators on $X$ satisfying $c_1 \leq c_2$, then $(X,c_1)$ being \v{C}F$T_1$ implies that $(X,c_2)$ is also \v{C}F$T_1$. 
			\end{remark}


			\begin{Theorem}\label{thm:CFT1}
				A fuzzy closure space $(X,c)$ is \v{C}F$T_1$ if and only if every fuzzy point is well closed.
			\end{Theorem}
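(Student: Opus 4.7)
My plan is to translate the \v{C}F$T_1$ inclusion $x_\lambda \leq \mathfrak{Co}(c(y_\gamma))$ into the pointwise inequality $c(y_\gamma)(x) \leq 1-\lambda$ and then exploit the fact that the axiom quantifies over every value $\lambda \in (0,1]$. Pushing $\lambda$ up to $1$ will force $c(y_\gamma)$ to vanish at every support $x \neq y$, which is exactly what \emph{well closed} demands. This mirrors the classical topological analogue in Theorem \ref{thm:FT1}.

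For the forward direction, I would fix a fuzzy point $y_\gamma$ and an arbitrary $x \neq y$, then apply \v{C}F$T_1$ to the distinct pair $y_\gamma, x_1$. The resulting inclusion $x_1 \leq \mathfrak{Co}(c(y_\gamma))$ unwinds to $c(y_\gamma)(x) = 0$. Since $x \neq y$ was arbitrary, the fuzzy set $c(y_\gamma)$ is supported on $\{y\}$; combined with the closure axiom $y_\gamma \leq c(y_\gamma)$, which gives $c(y_\gamma)(y) \geq \gamma > 0$, I obtain $c(y_\gamma) = y_\mu$ for some $\mu \in [\gamma,1]$, so $y_\gamma$ is well closed.

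For the converse, I would take two distinct fuzzy points $x_\lambda$ and $y_\gamma$, so $x \neq y$ by the paper's convention. By hypothesis $c(y_\gamma) = y_\mu$ for some $\mu \in (0,1]$, hence $c(y_\gamma)(x) = 0$ and $\mathfrak{Co}(c(y_\gamma))(x) = 1 \geq \lambda$. Since $x_\lambda$ vanishes off $x$, this yields $x_\lambda \leq \mathfrak{Co}(c(y_\gamma))$; a symmetric argument gives $y_\gamma \leq \mathfrak{Co}(c(x_\lambda))$. I do not anticipate any substantial obstacle; the only mild subtlety is using the full range $\lambda \in (0,1]$ of the \v{C}F$T_1$ axiom to extract crisp support information about $c(y_\gamma)$.
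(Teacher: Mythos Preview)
Your proposal is correct and follows essentially the same approach as the paper: both arguments hinge on testing the \v{C}F$T_1$ condition against a fuzzy singleton of value $1$ at a point with support different from $y$, which forces $c(y_\gamma)$ to vanish off $\{y\}$. The only cosmetic difference is that the paper phrases the forward direction as a proof by contradiction, whereas you give the equivalent direct argument.
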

			
			\begin{proof}
				$(\Rightarrow)$ Assume that $(X,c)$ is \v{C}F$T_1$. Suppose, to the contrary, that there exists a fuzzy point $x_\lambda \in I^X$ which is not well closed. Then, there exists some $y \in X$, with $y \neq x$, such that $y_\gamma \in c(x_\lambda)$ for some $\gamma \in (0,1]$. Consider the fuzzy points $x_\lambda$ and $y_1$. Clearly, $y_1 \notin \mathfrak{Co}(c(x_\lambda))$, which contradicts the assumption that $(X,c)$ is \v{C}F$T_1$. Hence, the assumption is false, and every fuzzy point $x_\lambda$ must be well closed.
				
				$(\Leftarrow)$ Conversely, assume that every fuzzy point is well closed. Then, for all distinct $x,y \in X$ and for all $0<\lambda,\gamma \leq 1$, we have $x_\lambda \notin c(y_\gamma)$. Consequently, $x_\lambda \in \mathfrak{Co}(c(y_\gamma))$ and $y_\gamma \in \mathfrak{Co}(c(x_\lambda))$. Therefore, $(X,c)$ is a \v{C}F$T_1$ fcs.
			\end{proof}

			\begin{lemma}\label{thm:CFT1L}
				In a fuzzy closure space $(X,c)$, every fuzzy point is well closed if and only if every fuzzy singleton is closed.
			\end{lemma}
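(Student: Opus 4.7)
The plan is to prove both directions using monotonicity of the closure operator $c$ (which follows from axiom (iii): if $f \leq g$ then $c(f) \leq c(f \vee g) = c(f) \vee c(g)$, so $c(f) \leq c(g)$) together with a careful analysis of when a fuzzy set dominated by $x_1$ is itself a fuzzy point.

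For the forward direction ($\Rightarrow$), I would take any $x \in X$ and apply the well-closed hypothesis to the fuzzy singleton $x_1$. Then $c(x_1)$ is a fuzzy point, say $c(x_1) = y_\mu$ with $0 < \mu \leq 1$. Since $x_1 \leq c(x_1) = y_\mu$, evaluating at $x$ gives $1 \leq y_\mu(x)$, which forces $y = x$ and $\mu = 1$. Hence $c(x_1) = x_1$, i.e., $x_1$ is closed.

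For the converse ($\Leftarrow$), I would fix an arbitrary fuzzy point $x_\lambda$ with $0 < \lambda \leq 1$. Since $x_\lambda \leq x_1$, monotonicity and the hypothesis $c(x_1) = x_1$ give
\[
x_\lambda \leq c(x_\lambda) \leq c(x_1) = x_1.
\]
Evaluating at any $y \neq x$ yields $c(x_\lambda)(y) \leq x_1(y) = 0$, so $c(x_\lambda)$ vanishes off $x$. Evaluating at $x$ yields $\lambda \leq c(x_\lambda)(x) \leq 1$, so $c(x_\lambda)(x) = \mu$ for some $\mu \in [\lambda, 1]$. Therefore $c(x_\lambda) = x_\mu$ is a fuzzy point, and $x_\lambda$ is well closed.

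I do not expect a real obstacle: both directions reduce to sandwiching $c(x_\lambda)$ between $x_\lambda$ and $x_1$ and reading off values pointwise. The only subtlety is being careful with the convention that a fuzzy point requires strictly positive value, so that $x_\mu$ with $\mu \geq \lambda > 0$ genuinely qualifies as a fuzzy point.
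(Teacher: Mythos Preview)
Your proof is correct and follows essentially the same approach as the paper: both directions sandwich $c(x_\lambda)$ between $x_\lambda$ and $x_1$ using monotonicity, with the forward direction specializing to $\lambda=1$ to force $c(x_1)=x_1$. Your version simply spells out more explicitly why a fuzzy point dominating $x_1$ must equal $x_1$ and why $c(x_\lambda)\leq x_1$ forces $c(x_\lambda)$ to be a fuzzy point supported at $x$, details the paper leaves implicit.
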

			
			\begin{proof}
				$(\Rightarrow)$ Assume that every fuzzy point in $(X,c)$ is well closed. In particular, each fuzzy singleton $x_1 \in X$ is well closed, which implies $c(x_1)=x_1$.  
				
				$(\Leftarrow)$ Conversely, suppose $c(x_1)=x_1$ for every $x \in X$. Then, for any fuzzy point $x_\lambda$ with $\lambda \in (0,1]$, we have $c(x_\lambda) \leq c(x_1) = x_1,$
				which shows that $x_\lambda$ is well closed.  
			\end{proof}

			\begin{Theorem}\label{thm:T1}
				A fuzzy closure space $(X,c)$ is \v{C}F$T_1$ if and only if every fuzzy singleton in $X$ is closed.
			\end{Theorem}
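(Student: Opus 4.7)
The plan is to derive this theorem immediately from the two results that directly precede it, namely Theorem \ref{thm:CFT1} and Lemma \ref{thm:CFT1L}. Theorem \ref{thm:CFT1} asserts that $(X,c)$ is \v{C}F$T_1$ if and only if every fuzzy point is well closed, and Lemma \ref{thm:CFT1L} asserts that every fuzzy point is well closed if and only if every fuzzy singleton is closed. Chaining these two biconditionals produces exactly the equivalence claimed in the theorem, so the proof should consist of a short two-step citation rather than a fresh argument from the definitions.

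Concretely, for the forward direction I would assume $(X,c)$ is \v{C}F$T_1$, apply Theorem \ref{thm:CFT1} to conclude that every fuzzy point of $X$ is well closed, and then apply the forward implication of Lemma \ref{thm:CFT1L} to conclude $c(x_1)=x_1$ for every $x\in X$. For the converse, I would assume $c(x_1)=x_1$ for every $x\in X$, invoke the reverse implication of Lemma \ref{thm:CFT1L} to lift this to well-closedness of every fuzzy point $x_\lambda$ (the underlying observation being that $x_\lambda\leq x_1$ gives $c(x_\lambda)\leq c(x_1)=x_1$ by monotonicity, which forces $c(x_\lambda)$ to be a fuzzy point at $x$), and finally apply the reverse implication of Theorem \ref{thm:CFT1} to conclude $(X,c)$ is \v{C}F$T_1$.

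Since both supporting results have already been established, there is no substantive obstacle; the theorem is essentially a corollary. The only minor stylistic decision is whether to present the proof as a one-liner citing both previous results, or to spell out the two-step chain explicitly. I would choose the latter so that the analogy with Theorem \ref{thm:FT1} for fuzzy topological spaces is made transparent, emphasising that the closed-singleton characterisation of $T_1$-type separation carries over verbatim from the topological setting to the \v{C}ech fuzzy closure setting.
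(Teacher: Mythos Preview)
Your proposal is correct and matches the paper's own proof exactly: the paper simply states that the result follows immediately from Theorem~\ref{thm:CFT1} and Lemma~\ref{thm:CFT1L}, which is precisely the two-step chain of biconditionals you describe.
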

			\begin{proof}
				The result follows immediately from Theorem \ref{thm:CFT1} and Lemma \ref{thm:CFT1L}.
			\end{proof}
			
			As a direct consequence of the above characterization, we obtain the following result.  
			
			\begin{Theorem}
				A fuzzy closure space $(X,c)$ is \v{C}F$T_1$ if and only if its associated fuzzy topological space $(X,\tau(c))$ is $FT_1$.
			\end{Theorem}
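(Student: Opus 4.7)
The plan is to chain Theorem~\ref{thm:T1} (a \v{C}F$T_1$ characterization via closed singletons in $(X,c)$) with Theorem~\ref{thm:FT1} (an $FT_1$ characterization via closed singletons in a fuzzy topological space). The connecting observation that will make this work is that the notion of a closed fuzzy set in $(X,c)$ coincides with the notion of a closed fuzzy set in the associated fts $(X,\tau(c))$.

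First, I would verify this coincidence. By definition, $\tau(c) = \{f : c(\mathfrak{Co}(f)) = \mathfrak{Co}(f)\}$, so a fuzzy set $g$ is closed in $(X,\tau(c))$ if and only if $\mathfrak{Co}(g) \in \tau(c)$, i.e.\ $c(\mathfrak{Co}(\mathfrak{Co}(g))) = \mathfrak{Co}(\mathfrak{Co}(g))$, which simplifies to $c(g) = g$. This is precisely the condition for $g$ to be closed in the fuzzy closure space $(X,c)$. In particular, a fuzzy singleton $x_1$ is closed in $(X,c)$ if and only if it is closed in $(X,\tau(c))$.

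With this bridge in hand, the proof is immediate. By Theorem~\ref{thm:T1}, $(X,c)$ is \v{C}F$T_1$ if and only if $c(x_1) = x_1$ for every $x \in X$, i.e.\ every fuzzy singleton is closed in $(X,c)$. By the preceding paragraph, this is equivalent to every fuzzy singleton being closed in $(X,\tau(c))$. Finally, by Theorem~\ref{thm:FT1}, this last condition is equivalent to $(X,\tau(c))$ being $FT_1$. Concatenating these three equivalences yields the desired result.

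There is no real obstacle: the only step requiring any genuine manipulation is identifying closed sets of $(X,\tau(c))$ with the fixed points of $c$, and this is a direct unpacking of the definition of $\tau(c)$. All remaining content is imported verbatim from Theorem~\ref{thm:T1} and Theorem~\ref{thm:FT1}.
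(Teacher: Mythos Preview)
Your proposal is correct and follows exactly the paper's own approach: the paper's proof simply cites Theorem~\ref{thm:FT1} and Theorem~\ref{thm:T1} and says the result follows directly. Your additional paragraph explicitly verifying that closed fuzzy sets in $(X,c)$ coincide with closed fuzzy sets in $(X,\tau(c))$ is a useful clarification the paper leaves implicit, but it does not change the argument.
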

			\begin{proof}
				This statement follows directly from Theorem \ref{thm:FT1} and Theorem \ref{thm:T1}.
			\end{proof}
			
			\begin{Definition}
				A fuzzy closure space is said to be \v{C}F$T_s$ (or strongly \v{C}F$T_1$) if every fuzzy point is closed.
			\end{Definition}
			
			It is immediate that every \v{C}F$T_s$ space is also \v{C}F$T_1$. In the setting of \v{C}ech closure spaces, every finite $T_1$ space is discrete. An analogous result in the context of fuzzy closure spaces is stated below.  
			
			\begin{Theorem}
				Every finite \v{C}F$T_s$ fuzzy closure space is discrete.
			\end{Theorem}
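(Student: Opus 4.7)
The plan is to show that $c(f) = f$ for every $f \in I^X$, so that $c$ coincides with the discrete fuzzy closure operator. The guiding idea is that finiteness of $X$ lets us write any fuzzy subset as a \emph{finite} join of fuzzy points, and a \v{C}F-closure operator respects finite joins by axiom (iii).

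First I would extend axiom (iii), $c(f \vee g) = c(f) \vee c(g)$, by a one-line induction to any finite join: for $f_1,\dots,f_n \in I^X$,
\[
c(f_1 \vee f_2 \vee \cdots \vee f_n) \;=\; c(f_1) \vee c(f_2) \vee \cdots \vee c(f_n).
\]
Next, for any $f \in I^X$ I would observe the pointwise identity
\[
f \;=\; \bigvee_{x \in X} x_{f(x)},
\]
interpreting the term $x_{f(x)}$ as the fuzzy empty set $\underline{0}$ whenever $f(x) = 0$. The identity is immediate by evaluating both sides at an arbitrary $y \in X$. Since $X$ is finite, this is a finite join, so the extended additivity above applies.

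Now I would invoke the \v{C}F$T_s$ hypothesis, which says $c(x_\lambda) = x_\lambda$ for every fuzzy point, together with axiom (i), $c(\underline{0}) = \underline{0}$, to compute
\[
c(f) \;=\; c\!\Big(\bigvee_{x \in X} x_{f(x)}\Big) \;=\; \bigvee_{x \in X} c\bigl(x_{f(x)}\bigr) \;=\; \bigvee_{x \in X} x_{f(x)} \;=\; f.
\]
Hence $c$ is the discrete fuzzy closure operator, and $(X,c)$ is discrete.

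There is no serious obstacle here; the only thing to be careful about is that when some $f(x) = 0$ the corresponding summand is $\underline{0}$ (not a genuine fuzzy point), but since $c(\underline{0}) = \underline{0}$ this does not affect the calculation. Finiteness of $X$ is used in an essential way to guarantee that the decomposition of $f$ is a \emph{finite} join, which is precisely the regime in which axiom (iii) applies.
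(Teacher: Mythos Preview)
Your proof is correct and is exactly the ``straightforward'' argument the paper has in mind: decompose an arbitrary $f$ as a finite join of fuzzy points using finiteness of $X$, apply finite additivity of $c$, and use the \v{C}F$T_s$ hypothesis $c(x_\lambda)=x_\lambda$ (together with $c(\underline{0})=\underline{0}$) to conclude $c(f)=f$.
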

			\begin{proof}
				Straightforward.
			\end{proof}
			
			\begin{remark}
				Every \v{C}F$T_1$ fuzzy closure space is \v{C}F$T_0$.  
				However, the converse does not hold in general, as demonstrated by the following counterexample.
				
				Consider the set of natural numbers $\mathbb{N}$ equipped with the fuzzy closure operator $c$ defined by $
				c(\underline{0}) = \underline{0}, \quad 
				c(x_\lambda) = x_\lambda \vee (x+1)_\lambda, \text{ and }
				c(f) = \bigvee_{x_\lambda \in f} c(x_\lambda),$
				for other fuzzy subsets $f$ of $\mathbb{N}$.
				
				If $y = x+1$, then
				\[
				x_\lambda \in \mathfrak{Co}(c(x_\lambda))(y_\gamma) = 1_{\mathbb{N}\setminus \{x+1, x+2\}} \vee (x+1)_{1-\gamma} \vee (x+2)_{1-\gamma}.
				\]
				On the other hand, if $y \notin \{ x, x+1\}$, we obtain
				\[
				y_\gamma \in \mathfrak{Co}(c(x_\lambda)) = 1_{\mathbb{N}\setminus \{x, x+1\}} \vee (x)_{1-\lambda} \vee (x+1)_{1-\lambda}.
				\]
				
				From this, it follows that $(\mathbb{N},c)$ satisfies the $T_0$ condition. However, every fuzzy point $x_\lambda$ in $(\mathbb{N},c)$ is not well-closed. Therefore, $(\mathbb{N},c)$ is not \v{C}F$T_1$.
			\end{remark}

			\begin{Theorem}
				The property \v{C}F$T_1$ is a fuzzy closure property.
			\end{Theorem}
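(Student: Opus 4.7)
The plan is to mirror the argument used for \v{C}F$T_0$, adapted to the stronger symmetric requirement of \v{C}F$T_1$. Let $\theta:(X,c)\to(Y,d)$ be a \v{C}F-homeomorphism and assume $(X,c)$ is \v{C}F$T_1$. I take two distinct fuzzy points $y^1_\lambda$ and $y^2_\gamma$ in $(Y,d)$, and use the bijectivity of $\theta$ to obtain distinct $x^1,x^2\in X$ with $\theta(x^i)=y^i$. The corresponding fuzzy points $x^1_\lambda$ and $x^2_\gamma$ are then distinct fuzzy points in $(X,c)$.

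Next, I would invoke the \v{C}F$T_1$ hypothesis on $(X,c)$, which provides \emph{both} inclusions simultaneously: $x^1_\lambda\in\mathfrak{Co}_X(c(x^2_\gamma))$ and $x^2_\gamma\in\mathfrak{Co}_X(c(x^1_\lambda))$. Applying $\theta$ to the first, and using the homeomorphism identity $\theta(c(f))=d(\theta(f))$ together with the fact that bijections commute with complements (so $\theta(\mathfrak{Co}_X(f))=\mathfrak{Co}_Y(\theta(f))$), I compute
\[
y^1_\lambda=\theta(x^1_\lambda)\in\theta\bigl(\mathfrak{Co}_X(c(x^2_\gamma))\bigr)=\mathfrak{Co}_Y(\theta(c(x^2_\gamma)))=\mathfrak{Co}_Y(d(\theta(x^2_\gamma)))=\mathfrak{Co}_Y(d(y^2_\gamma)).
\]
The symmetric calculation gives $y^2_\gamma\in\mathfrak{Co}_Y(d(y^1_\lambda))$. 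Since the pair $y^1_\lambda,y^2_\gamma$ was arbitrary, $(Y,d)$ is \v{C}F$T_1$.

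An alternative, arguably cleaner route is to use Theorem \ref{thm:T1}: $(X,c)$ is \v{C}F$T_1$ iff every fuzzy singleton is closed. For any $y\in Y$, write $y=\theta(x)$; bijectivity implies $\theta(x_1)=y_1$, and then $d(y_1)=d(\theta(x_1))=\theta(c(x_1))=\theta(x_1)=y_1$, so fuzzy singletons in $Y$ are closed and $(Y,d)$ is \v{C}F$T_1$. I would likely present the first argument for uniformity with the earlier \v{C}F$T_0$ theorem, and remark on the singleton-based proof as a corollary of Theorem \ref{thm:T1}.

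I do not anticipate any real obstacle here: all the ingredients, namely the homeomorphism identity $\theta(c(f))=d(\theta(f))$, the interaction of a bijection with complements, and the closed-singleton characterization, have already been established. The only point requiring a little care is ensuring that $\theta(x_1)=(\theta(x))_1$ (and similarly for fuzzy points of arbitrary value), which is an immediate consequence of $\theta^{-1}(y)$ being a singleton whenever $\theta$ is bijective.
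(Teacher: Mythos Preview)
Your proposal is correct, and in fact you have written down both viable proofs. However, the paper chooses the opposite emphasis from yours: its actual proof is precisely your ``alternative, arguably cleaner route'' via Theorem~\ref{thm:T1}. It picks an arbitrary fuzzy singleton $y_1$ in $Y$, writes $y_1=\theta(x_1)$, and then computes $d(y_1)=d(\theta(x_1))=\theta(c(x_1))=\theta(x_1)=y_1$ to conclude that every fuzzy singleton in $(Y,d)$ is closed. Your primary argument, mirroring the \v{C}F$T_0$ case through the definition and the identity $\theta(\mathfrak{Co}_X(c(\cdot)))=\mathfrak{Co}_Y(d(\theta(\cdot)))$, is equally valid and has the virtue of uniformity with the preceding section; the paper's choice has the virtue of exploiting the characterization theorem just proved and being a couple of lines shorter. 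Either presentation is fine.
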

			
			\begin{proof}
				Let $(X,c)$ and $(Y,d)$ be two fuzzy closure spaces, and let $
				\theta : (X,c) \longrightarrow (Y,d)$
				be a \v{C}F-homeomorphism. Suppose $y_1$ is a fuzzy singleton in $Y$. Then there exists an $x \in X$ such that $\theta(x_1) = y_1.$
				Since $\theta$ is a homeomorphism, we have
				\[
				y_1 = \theta(x_1) = \theta(c(x_1)) = d(\theta(x_1)) = d(y_1).
				\]
				Thus, $y_1$ is closed in $(Y,d)$. Hence, if $(X,c)$ is \v{C}F$T_1$, then $(Y,d)$ is also \v{C}F$T_1$. Therefore, \v{C}F$T_1$ is preserved under \v{C}F-homeomorphisms, and consequently it is a fuzzy closure property.
			\end{proof}

			\begin{Theorem}
				A fuzzy closure space $(X,c)$ is \v{C}F$T_1$ if and only if for every pair of distinct fuzzy points $x_\lambda$ and $y_\gamma$, there exists $f,g\in I^X$ such that  $x_\lambda \in \intt(f) \text{ and } y_\gamma \in \mathfrak{Co}(f)$ and $y_\gamma \in \intt(g) \text{ and } x_\lambda \in \mathfrak{Co}(g)$.
			\end{Theorem}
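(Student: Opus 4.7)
The statement is the natural $T_1$ analogue of the $T_0$ characterization already proved, and the plan is to adapt that earlier proof by applying it symmetrically in both directions. Specifically, I will show that each of the two $T_1$ conditions $x_\lambda \in \mathfrak{Co}(c(y_\gamma))$ and $y_\gamma \in \mathfrak{Co}(c(x_\lambda))$ can be witnessed by a neighborhood statement involving $\operatorname{int}$, and conversely.

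For the forward direction, assume $(X,c)$ is \v{C}F$T_1$ and let $x_\lambda$ and $y_\gamma$ be distinct fuzzy points. Set $f = \mathfrak{Co}(y_\gamma)$ and $g = \mathfrak{Co}(x_\lambda)$. Then by involutivity of $\mathfrak{Co}$, we immediately get $y_\gamma \in \mathfrak{Co}(f)$ and $x_\lambda \in \mathfrak{Co}(g)$. By the definition of the interior operator,
\[
\operatorname{int}(f) = \mathfrak{Co}(c(\mathfrak{Co}(f))) = \mathfrak{Co}(c(y_\gamma)) \quad \text{and} \quad \operatorname{int}(g) = \mathfrak{Co}(c(x_\lambda)).
\]
The \v{C}F$T_1$ hypothesis gives both $x_\lambda \in \mathfrak{Co}(c(y_\gamma)) = \operatorname{int}(f)$ and $y_\gamma \in \mathfrak{Co}(c(x_\lambda)) = \operatorname{int}(g)$, as required.

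For the converse, assume that for every pair of distinct fuzzy points $x_\lambda$ and $y_\gamma$ there exist $f,g \in I^X$ satisfying the two neighborhood conditions. From $y_\gamma \in \mathfrak{Co}(f)$, i.e., $y_\gamma \leq \mathfrak{Co}(f)$, the monotonicity of $c$ yields $c(y_\gamma) \leq c(\mathfrak{Co}(f))$, and hence $\operatorname{int}(f) = \mathfrak{Co}(c(\mathfrak{Co}(f))) \leq \mathfrak{Co}(c(y_\gamma))$. Combined with $x_\lambda \in \operatorname{int}(f)$, this gives $x_\lambda \in \mathfrak{Co}(c(y_\gamma))$. By the symmetric argument applied to $g$, we obtain $y_\gamma \in \mathfrak{Co}(c(x_\lambda))$. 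Therefore $(X,c)$ is \v{C}F$T_1$.

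No serious obstacle is expected here; the argument is essentially two parallel applications of the identity $\operatorname{int}(\mathfrak{Co}(h)) = \mathfrak{Co}(c(h))$, together with monotonicity of $c$. The only point requiring care is making sure the two pieces of the $T_1$ condition are produced by distinct sets $f$ and $g$, which is why the definition states the existence of a pair rather than a single fuzzy set as in the \v{C}F$T_0$ characterization.
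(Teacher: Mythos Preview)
Your proof is correct and follows exactly the approach the paper intends: it simply applies the \v{C}F$T_0$ neighborhood characterization argument symmetrically in both directions, which is precisely what the paper's ``similar proof as in the case of \v{C}F$T_0$ fcs'' indicates.
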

			\begin{proof}
				Similar proof as in the case of \v{C}F$T_0$ fcs.
			\end{proof}
			In the following theorems, we prove additive and productive properties of the \v{C}F$T_1$ separation axiom in fuzzy closure spaces.

			\begin{Theorem}
				Let $\mathcal{F} = \{(X_t, c_t) : t \in T\}$ be a family of pairwise disjoint fuzzy closure spaces. Then their sum
				$(\underset{t \in T}{\bigvee} X_t, \ \oplus c_t)$
				is \v{C}F$T_1$ if and only if each $(X_t, c_t)$ is \v{C}F$T_1$.
			\end{Theorem}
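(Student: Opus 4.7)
The plan is to mirror the proof of the analogous additive theorem for \v{C}F$T_0$, with two straightforward adjustments: (i) replace every ``either/or'' disjunction by a conjunction, since the \v{C}F$T_1$ axiom demands both $x_\lambda \in \mathfrak{Co}(c(y_\gamma))$ and $y_\gamma \in \mathfrak{Co}(c(x_\lambda))$, and (ii) handle the cross-summand case by invoking the disjointness of the $X_t$, which makes the separation automatic. Throughout, the crucial computational identity is that for a fuzzy point $x_\lambda$ with $x \in X_{t_1}$,
\[
\oplus c_t(x_\lambda) \;=\; \bigvee_{t \in T} c_t(x_\lambda \wedge 1_{X_t}) \;=\; c_{t_1}(x_\lambda),
\]
since $x_\lambda \wedge 1_{X_t} = \underline{0}$ for $t \neq t_1$ and therefore $c_t(x_\lambda \wedge 1_{X_t}) = c_t(\underline{0}) = \underline{0}$.

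For the sufficiency direction, I would assume each $(X_t,c_t)$ is \v{C}F$T_1$ and take two distinct fuzzy points $x_\lambda, y_\gamma \in I^X$. In the first case, where $x,y \in X_{t_1}$ for some common index, the \v{C}F$T_1$ property of $(X_{t_1}, c_{t_1})$ gives both $x_\lambda \in \mathfrak{Co}_{X_{t_1}}(c_{t_1}(y_\gamma))$ and $y_\gamma \in \mathfrak{Co}_{X_{t_1}}(c_{t_1}(x_\lambda))$, and the identity above promotes these to the analogous relations in $X$. In the second case, where $x \in X_{t_1}$ and $y \in X_{t_2}$ with $t_1 \neq t_2$, the closures $c(x_\lambda) = c_{t_1}(x_\lambda)$ and $c(y_\gamma) = c_{t_2}(y_\gamma)$ are supported on disjoint summands, so $x_\lambda \leq \mathfrak{Co}(1_{X_{t_2}}) \leq \mathfrak{Co}(c(y_\gamma))$ and symmetrically for the other inclusion; both required memberships are free.

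For the necessity direction, suppose $(X, \oplus c_t)$ is \v{C}F$T_1$ and fix any $t_1 \in T$ together with two distinct fuzzy points $x_\lambda, y_\gamma \in I^{X_{t_1}}$. Regarding them as fuzzy points of $(X, \oplus c_t)$, the hypothesis yields $x_\lambda \in \mathfrak{Co}_X(\oplus c_t(y_\gamma))$ and $y_\gamma \in \mathfrak{Co}_X(\oplus c_t(x_\lambda))$. Applying the same identity $\oplus c_t(x_\lambda) = c_{t_1}(x_\lambda)$ and $\oplus c_t(y_\gamma) = c_{t_1}(y_\gamma)$, and restricting membership to the subspace $X_{t_1}$, gives $x_\lambda \in \mathfrak{Co}_{X_{t_1}}(c_{t_1}(y_\gamma))$ and $y_\gamma \in \mathfrak{Co}_{X_{t_1}}(c_{t_1}(x_\lambda))$, so $(X_{t_1}, c_{t_1})$ is \v{C}F$T_1$.

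I do not expect any genuine obstacle here; the argument is essentially a direct transcription of the \v{C}F$T_0$ additive theorem. The only point requiring mild care is verifying the identity $\oplus c_t(x_\lambda) = c_{t_1}(x_\lambda)$ once (using pairwise disjointness and $c_t(\underline{0}) = \underline{0}$), after which both implications reduce to simple bookkeeping. No appeal to interior operators, neighborhoods, or the \v{C}F$T_1$ singleton characterization is needed, though one could alternatively give a slicker proof using Theorem~\ref{thm:T1}: every fuzzy singleton in the sum is closed iff its restriction to the corresponding summand is closed there.
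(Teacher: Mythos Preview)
Your proposal is correct, and it follows precisely the template you describe: the \v{C}F$T_0$ additive argument, upgraded from ``either/or'' to ``and'', with the identity $\oplus c_t(x_\lambda) = c_{t_1}(x_\lambda)$ doing all the work. There is no gap.

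However, the paper takes a different route --- in fact, exactly the alternative you sketch in your final sentence. Rather than working with the \v{C}F$T_1$ definition directly, the paper invokes the characterization Theorem~\ref{thm:T1} (\v{C}F$T_1$ iff every fuzzy singleton is closed) and then checks that a fuzzy singleton $x_1$ with $x \in X_{t_1}$ satisfies $\oplus c_t(x_1) = c_{t_1}(x_1)$, so $x_1$ is closed in the sum iff it is closed in $(X_{t_1}, c_{t_1})$. This approach is shorter because it sidesteps the two-case analysis on whether $x$ and $y$ lie in the same summand: closedness of a single singleton is purely local, so the cross-summand case never arises. Your approach, by contrast, is more self-contained --- it does not depend on the singleton characterization and stays closer to the definition --- at the cost of a few extra lines handling Case~(ii). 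Both are perfectly valid; the paper simply chose the route you flagged as ``slicker.''
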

			
			\begin{proof}
				$(\Rightarrow)$ Suppose that $(\underset{t \in T}{\bigvee} X_t, \oplus c_t)$ is \v{C}F$T_1$.  
				Let $x_1$ be a fuzzy singleton in $X_{t_1}$, for some $t_1 \in T$.  
				Since $(\bigvee_{t \in T} X_t, \oplus c_t)$ is \v{C}F$T_1$, treating $x_1$ as a fuzzy singleton of $\underset{t \in T}{\bigvee} X_t$, we have $\oplus c_t(x_1) = x_1.$\\
				By the definition of the sum closure operator,
				\[
				\oplus c_t(x_1) = \underset{t \in T}{\bigvee} c_t(x_1 \wedge 1_{X_t}) = c_{t_1}(x_1).
				\]
				Thus, $c_{t_1}(x_1) = x_1$, which shows that $(X_{t_1}, c_{t_1})$ is \v{C}F$T_1$.  
				Since $t_1 \in T$ was arbitrary, it follows that each $(X_t, c_t)$ is \v{C}F$T_1$.
				
				\medskip
				$(\Leftarrow)$ Conversely, suppose that each $(X_t, c_t)$ is \v{C}F$T_1$ for all $t \in T$.  
				Let $x_1$ be a fuzzy singleton in $\underset{t \in T}{\bigvee} X_t$.  
				Then there exists a unique $t_1 \in T$ such that $x \in X_{t_1}$.  
				Since $(X_{t_1}, c_{t_1})$ is \v{C}F$T_1$, we have
				\[
				c_{t_1}(x_1) = x_1.
				\]
				Therefore,
				\[
				\oplus c_t(x_1) = \underset{t \in T}{\bigvee} c_t(x_1 \wedge 1_{X_t}) = c_{t_1}(x_1) = x_1.
				\]
				Hence, the sum space $(\underset{t \in T}{\bigvee} X_t, \oplus c_t)$ is \v{C}F$T_1$.
			\end{proof}

			\begin{Theorem}
				Let $\mathcal{F} = \{(X_t, c_t) : t \in T\}$ be a family of fuzzy closure spaces. Then their product
				$(X, c) = (\underset{t \in T}{\prod}X_t, \ \otimes c_t )$ is \v{C}F$T_1$ if and only if each factor space $(X_t, c_t)$ is \v{C}F$T_1$.
			\end{Theorem}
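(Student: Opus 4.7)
The strategy is to reduce the statement to the characterization from Theorem \ref{thm:T1}, namely that a fuzzy closure space is \v{C}F$T_1$ exactly when every fuzzy singleton is closed, and then exploit the explicit formula $\otimes c_t(x_1)=\prod_{t\in T} c_t(x^t_1)$ from Lemma \ref{thm:prod2}. The underlying observation I would record first is that for $x=(x^t)_{t\in T}\in X$, the product fuzzy set $\prod_{t\in T} x^t_1$, whose value at $y$ is $\inf_{t\in T} x^t_1(y^t)$, coincides with the fuzzy singleton $x_1$ on $X$, since $y=x$ precisely when every coordinate matches.

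For the sufficiency direction, I would assume each $(X_t,c_t)$ is \v{C}F$T_1$. Given any fuzzy singleton $x_1$ in the product, Theorem \ref{thm:T1} gives $c_t(x^t_1)=x^t_1$ for every $t$. Lemma \ref{thm:prod2} then yields
\[
\otimes c_t(x_1)=\prod_{t\in T} c_t(x^t_1)=\prod_{t\in T} x^t_1=x_1,
\]
so $x_1$ is closed in the product, and another appeal to Theorem \ref{thm:T1} concludes that $(X,\otimes c_t)$ is \v{C}F$T_1$.

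For the necessity direction, I would assume $(X,\otimes c_t)$ is \v{C}F$T_1$, fix $t_0\in T$, and pick an arbitrary $z\in X_{t_0}$. Choose any $x\in\prod_{t\in T} X_t$ with $x^{t_0}=z$. Since $x_1$ is closed in the product, Lemma \ref{thm:prod2} forces $\prod_{t\in T} c_t(x^t_1)=x_1$. I would then evaluate both sides at points $y$ that agree with $x$ off the coordinate $t_0$: for such $y$ the factors $c_t(x^t_1)(y^t)$ with $t\neq t_0$ are all equal to $1$, since $x^t_1\leq c_t(x^t_1)$ and $x^t_1(x^t)=1$, so the infimum collapses to $c_{t_0}(z_1)(y^{t_0})$, while $x_1(y)$ equals $1$ when $y^{t_0}=z$ and $0$ otherwise. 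Hence $c_{t_0}(z_1)=z_1$, and since $z$ was arbitrary, Theorem \ref{thm:T1} implies that $(X_{t_0},c_{t_0})$ is \v{C}F$T_1$.

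The main obstacle is essentially bookkeeping around the product fuzzy set: one must correctly read $\prod_{t\in T} c_t(x^t_1)$ through its defining pointwise infimum and realize that varying a single coordinate isolates information about a single factor. No subtler difficulty arises, because the equivalence rests entirely on Lemma \ref{thm:prod2} together with the fuzzy-singleton characterization of \v{C}F$T_1$.
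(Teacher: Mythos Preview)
Your proof is correct and follows essentially the same route as the paper: both arguments rest on Lemma~\ref{thm:prod2} together with the characterization of \v{C}F$T_1$ via closure of fuzzy singletons (the paper phrases the sufficiency direction through well-closedness, Theorem~\ref{thm:CFT1}, and runs the necessity direction by contradiction, but these are cosmetic differences). Your explicit identification $\prod_{t\in T} x^t_1 = x_1$ and the direct coordinate evaluation in the necessity step are a slightly cleaner packaging of the same computation.
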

			
			\begin{proof}
				$(\Rightarrow)$ Suppose that each $(X_t, c_t)$ is \v{C}F$T_1$.  
				Let $x_\lambda \in I^X$. We must show that $x_\lambda$ is well closed in $(X, c)$.  
				If $y_\gamma \in \otimes c_t(x_\lambda)$, then by the definition of the product fuzzy closure, this means
				\[
				P_t(y_\gamma) \in c_t(P_t(x_\lambda)) 
				\quad \text{for all } t \in T,
				\]
				that is,
				\[
				y^t_\gamma \in c_t(x^t_\lambda) \quad \text{for all } t \in T.
				\]
				Since $(X_t, c_t)$ is \v{C}F$T_1$, it follows that,
				\[
				y^t_\gamma \in c_t(x^t_\lambda) \leq x^t_1,
				\]
				which implies $y^t = x^t$ for all $t \in T$. Hence, $y = x$, and therefore $x_\lambda$ is well closed.  
				Thus, $(X, c)$ is \v{C}F$T_1$.
				
				\medskip
				$(\Leftarrow)$ Conversely, assume that $(X, c) = (\underset{t \in T}{\prod}X_t, \ \otimes c_t )$ is \v{C}F$T_1$.  
				Suppose, for contradiction, that there exists $t_1 \in T$ such that $(X_{t_1}, c_{t_1})$ is not \v{C}F$T_1$. Then there exists a fuzzy singleton $x^{t_1}_1$ in $X_{t_1}$ that is not closed. Consequently, there exists $y^{t_1} \neq x^{t_1}$ in $X_{t_1}$ such that
				\[
				y^{t_1}_\gamma \in c_{t_1}(x^{t_1}_1) 
				\quad \text{for some } \gamma \in (0,1].
				\]
				
				Now, for each $t \neq t_1$, choose $x^t, y^t \in X_t$ such that $x^t = y^t$.  
				Consider the fuzzy points $x_1$ and $y_\gamma$ in $X = \underset{t \in T}{\prod}X_t$.  
				Then, for each $t \neq t_1$,
				\[
				P_t(y_\gamma) = y^t_\gamma = x^t_\gamma \in c_t(x^t_1),
				\]
				and for $t = t_1$, by assumption, 
				\[
				y^{t_1}_\gamma \in c_{t_1}(x^{t_1}_1).
				\]
				Therefore,
				\begin{center}
					
					$y_\gamma \in \underset{t \in T}{\prod} c_t(x^t_1) = \otimes c_t(x_1),$
				\end{center}
				which implies that $x_1$ is not closed in $(X, c)$.  
				
				This contradicts the assumption that $(X, c)$ is \v{C}F$T_1$.  
				Hence, $(X_t, c_t)$ is \v{C}F$T_1$ for all $t \in T$.
			\end{proof}

			\section{\v{C}F$T_2$ fuzzy closure spaces}
			
			\vspace{6pt}
			\begin{Definition}
				A fuzzy closure space $(X,c)$ is said to be \v{C}F$T_2$ or \v{C}ech fuzzy Hausdorff if for every pair of distinct fuzzy points $x_\lambda$ and $ y_\gamma $ in $X$, there exist two neighborhoods $f$ and $g$ of $x_\lambda$ and $ y_\gamma $ respectively such that  $f\leq \mathfrak{Co}(g)$ and $x_\lambda \in f\leq \mathfrak{Co}(y_\gamma) $ and $ y_\gamma \in g \leq \mathfrak{Co}(x_\lambda) $.
			\end{Definition}
			It is evident that every $FT_2$ fuzzy topological space is a \v{C}F$T_2$ fuzzy closure space. Let $c_1\text{ and } c_2$ are two fuzzy closure operators on $X$ such that $c_1\leq c_2$; if $(X,c_1)$ is \v{C}F$T_2$, then $(X,c_2)$ is also \v{C}F$T_2$.
			
			\begin{Theorem}
				If $(X, \tau(c))$ is $FT_2$ fts, then $(X,c)$ is \v{C}F$T_2$ fcs.
			\end{Theorem}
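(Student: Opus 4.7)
The plan is to exploit the observation that openness in the associated fuzzy topology $\tau(c)$ coincides exactly with being a neighborhood of every fuzzy point contained in the set, so the witnesses produced by the $FT_2$ condition in $(X,\tau(c))$ serve directly as witnesses for the \v{C}F$T_2$ condition in $(X,c)$.

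First, I would pick an arbitrary pair of distinct fuzzy points $x_\lambda, y_\gamma \in I^X$. Applying the hypothesis that $(X,\tau(c))$ is $FT_2$, I obtain open fuzzy sets $f, g \in \tau(c)$ satisfying
\[
f \leq \mathfrak{Co}(g), \qquad x_\lambda \in f \leq \mathfrak{Co}(y_\gamma), \qquad y_\gamma \in g \leq \mathfrak{Co}(x_\lambda).
\]
At this point the only thing missing for \v{C}F$T_2$ is that $f$ is a neighborhood of $x_\lambda$ and $g$ is a neighborhood of $y_\gamma$ in the fcs sense, since all the inequalities above are exactly the ones required by the \v{C}F$T_2$ definition.

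The central step is to upgrade openness in $\tau(c)$ to the neighborhood condition. By the definition of $\tau(c)$, we have $c(\mathfrak{Co}(f)) = \mathfrak{Co}(f)$, and hence by the interior operator characterization established earlier (an open set is a fixed point of $\operatorname{int}$, equivalently $f$ is a neighborhood of every fuzzy point in $f$), it follows that $\operatorname{int}(f) = f$. Therefore $x_\lambda \leq f = \operatorname{int}(f)$, so $f$ is a neighborhood of $x_\lambda$. The same reasoning applied to $g$ yields that $g$ is a neighborhood of $y_\gamma$.

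Combining these two ingredients, the sets $f$ and $g$ fulfill every requirement of the \v{C}F$T_2$ axiom for the arbitrary pair $x_\lambda, y_\gamma$, and thus $(X,c)$ is \v{C}F$T_2$. I do not anticipate any real obstacle: the argument is a transcription of the $FT_2$ witnesses, with the only substantive check being the openness-implies-neighborhood step, which is an immediate consequence of the interior operator theorem already proved in Section~3.
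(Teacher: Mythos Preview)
Your proposal is correct and follows exactly the same approach as the paper: the paper's proof is the single sentence ``every open set containing $x_\lambda$ in $(X,\tau(c))$ is a neighborhood of $x_\lambda$ in $(X,c)$,'' and your argument simply unpacks this observation in detail via the characterization $\operatorname{int}(f)=f$ for $f\in\tau(c)$.
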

			\begin{proof}
				The proof is trivial since every open set containing $x_\lambda$ in  $(X, \tau(c))$ is a neighborhood of $x_\lambda$ in $(X,c)$. 
			\end{proof}

			\begin{Theorem}
				Every \v{C}F$T_2$ fuzzy closure space is \v{C}F$T_1$.
			\end{Theorem}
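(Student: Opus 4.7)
The plan is to unfold the definitions of both separation axioms into statements about the closure operator using the identity $\intt(h) = \Co(c(\Co(h)))$. Given distinct fuzzy points $x_\lambda$ and $y_\gamma$ in a \v{C}F$T_2$ space, we are handed neighborhoods $f$ of $x_\lambda$ and $g$ of $y_\gamma$ with $f \leq \Co(g)$. To conclude \v{C}F$T_1$, we must show $x_\lambda \in \Co(c(y_\gamma))$ and $y_\gamma \in \Co(c(x_\lambda))$, i.e.\ $c(y_\gamma) \leq \Co(x_\lambda)$ and $c(x_\lambda) \leq \Co(y_\gamma)$.

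The core of the argument is the following chain, which I would run as the main step. From $f \leq \Co(g)$ we get the symmetric form $g \leq \Co(f)$. Combining with $y_\gamma \leq \intt(g) \leq g$ yields $y_\gamma \leq \Co(f)$, and monotonicity of $c$ gives
\[
c(y_\gamma) \leq c(\Co(f)).
\]
On the other hand, $x_\lambda \leq \intt(f) = \Co(c(\Co(f)))$ is equivalent to
\[
c(\Co(f)) \leq \Co(x_\lambda).
\]
Chaining these two inequalities gives $c(y_\gamma) \leq \Co(x_\lambda)$, which is exactly $x_\lambda \in \Co(c(y_\gamma))$. The same argument with the roles of $(x_\lambda,f)$ and $(y_\gamma,g)$ interchanged produces $y_\gamma \in \Co(c(x_\lambda))$, and both inclusions together are the \v{C}F$T_1$ condition.

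There is essentially no obstacle here: the proof is a three-line manipulation once one recognizes that a neighborhood $f$ of $x_\lambda$ is exactly a set whose complement has closure avoiding $x_\lambda$, and that the \v{C}F$T_2$ disjointness $f \leq \Co(g)$ lets one transport a closure bound from $\Co(f)$ down to $y_\gamma$. The only point to be a little careful about is to use the \emph{interior} containment $y_\gamma \leq \intt(g)$ rather than just $y_\gamma \leq g$ in the symmetric half, since only the former translates into a closure inequality of the required form via the identity $\intt = \Co \circ c \circ \Co$.
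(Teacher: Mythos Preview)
Your proof is correct and follows essentially the same approach as the paper: both unfold the neighborhood condition via the identity $\intt = \Co \circ c \circ \Co$ and use monotonicity to reach $x_\lambda \in \Co(c(y_\gamma))$. The only cosmetic difference is that the paper invokes the clause $f \leq \Co(y_\gamma)$ from the \v{C}F$T_2$ definition directly (so $x_\lambda \in \intt(f) \leq \intt(\Co(y_\gamma)) = \Co(c(y_\gamma))$ in one line, without using $g$ or the disjointness $f \leq \Co(g)$ at all), whereas you route through $f \leq \Co(g)$ and $y_\gamma \leq g$ to obtain the equivalent inequality $y_\gamma \leq \Co(f)$.
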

			
			\begin{proof}
				Let $(X, c)$ be a \v{C}F$T_2$ fuzzy closure space, and let $x_\lambda$ and $y_\gamma$ be two distinct fuzzy points in $X$.  
				By the \v{C}F$T_2$ property, there exist neighborhoods $f$ of $x_\lambda$ and $g$ of $y_\gamma$ such that 
				\[
				x_\lambda \in f \leq \mathfrak{Co}(y_\gamma), 
				\quad 
				y_\gamma \in g \leq \mathfrak{Co}(x_\lambda),
				\quad \text{and} \quad 
				f \leq \mathfrak{Co}(g).
				\]
				Since $x_\lambda \in f \leq \operatorname{int}(f)$, we obtain
				\[
				x_\lambda \in \operatorname{int}(f) \leq \operatorname{int}(\mathfrak{Co}(y_\gamma)).
				\]
				By the definition of interior,
				\[
				\operatorname{int}(\mathfrak{Co}(y_\gamma)) 
				= \mathfrak{Co}\!\bigl(c(\mathfrak{Co}(\mathfrak{Co}(y_\gamma)))\bigr) 
				= \mathfrak{Co}(c(y_\gamma)).
				\]
				Thus,
				\[
				x_\lambda \in \mathfrak{Co}(c(y_\gamma)).
				\]
				
				Similarly, $y_\gamma \in \mathfrak{Co}(c(x_\lambda))$. Therefore, $(X, c)$ is \v{C}F$T_1$.  
			\end{proof}

			\begin{Theorem}
				Every finite \v{C}F$T_1$ fuzzy closure space is \v{C}F$T_2$.
			\end{Theorem}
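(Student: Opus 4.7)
The plan is to exhibit, for any two distinct fuzzy points $x_\lambda$ and $y_\gamma$ (so $x \neq y$, by the definition of distinctness given after the notion of a fuzzy point), the crisp indicator fuzzy sets $f = 1_{\{x\}}$ and $g = 1_{\{y\}}$ as the required pair of separating neighborhoods. Essentially everything will follow once I establish that in a finite \v{C}F$T_1$ space every crisp subset of $X$ is closed, because then the complement of such a set, being again crisp, is also closed and so the original set is open.

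The key computation is this preliminary step. By Theorem \ref{thm:T1}, every fuzzy singleton $z_1$ satisfies $c(z_1) = z_1$. Because $X$ is finite, for every $A \subseteq X$ I can write $1_A = \bigvee_{z \in A} z_1$ as a finite join, and iterate axiom (iii) of the \v{C}F-closure operator to obtain $c(1_A) = \bigvee_{z \in A} c(z_1) = \bigvee_{z \in A} z_1 = 1_A$. Applying this to $A = X \setminus \{x\}$ gives that $\Co(f) = 1_{X \setminus \{x\}}$ is closed, so $\intt(f) = \Co(c(\Co(f))) = \Co(1_{X\setminus\{x\}}) = 1_{\{x\}} = f$, showing $f$ is open; by the same argument $g$ is open.

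It then remains only to verify the three conditions of the \v{C}F$T_2$ definition. Since $f$ is open and $x_\lambda \leq 1_{\{x\}} = f$, $f$ is a neighborhood of $x_\lambda$, and symmetrically $g$ is a neighborhood of $y_\gamma$. The containments $f \leq \Co(y_\gamma)$ and $g \leq \Co(x_\lambda)$ reduce to a pointwise check: $f$ vanishes off $x$, $g$ vanishes off $y$, and these two points differ, so the relevant coordinates are automatically bounded by $1-\gamma$ and $1-\lambda$ respectively. Finally, $f$ and $g$ have disjoint crisp supports, so $f(z) + g(z) \leq 1$ everywhere, i.e. $f \leq \Co(g)$.

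The only real ingredient is finiteness, which lets the third axiom of a \v{C}F-closure operator (only a \emph{finite} additivity) transfer the \v{C}F$T_1$ conclusion from singletons to arbitrary crisp sets; once crisp subsets are closed the argument mirrors the classical fact that a finite $T_1$ space is discrete and therefore Hausdorff. I do not expect a subtle obstacle: the awkward scenario of two fuzzy points sharing a common support with $\lambda + \gamma > 1$ is ruled out at the outset by the paper's convention that distinct fuzzy points have distinct supports, so the crisp indicators $1_{\{x\}}$ and $1_{\{y\}}$ are enough.
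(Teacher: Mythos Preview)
Your proof is correct and follows essentially the same approach as the paper: both arguments use the \v{C}F$T_1$ characterization that fuzzy singletons are closed, invoke finiteness (via finite additivity of $c$) to conclude that the crisp sets $1_{\{x\}}$ and $1_{\{y\}}$ are open, and then take these as the separating neighborhoods. Your write-up is in fact more explicit than the paper's at the key step, spelling out why $c(1_{X\setminus\{x\}}) = 1_{X\setminus\{x\}}$ rather than merely asserting that ``the complement of a fuzzy singleton is also closed.''
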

			
			\begin{proof}
				Let $(X, c)$ be a \v{C}F$T_1$ fuzzy closure space, where $X = \{x^1, x^2, x^3, \dots, x^n\}$ is a finite set. By the \v{C}F$T_1$ property, each fuzzy singleton $x^i_1$ $(i = 1,2,\dots,n)$ is closed. Since here the complement of a fuzzy singleton is also closed, it follows that every fuzzy singleton is open as well.  
				
				Now, let $x^i_\lambda$ and $x^j_\gamma$ be two distinct fuzzy points of $X$. Define
				\[
				f = x^i_1 \quad \text{and} \quad g = x^j_1.
				\]
				Since $f$ and $g$ are fuzzy singletons, we have 
				\[
				\operatorname{int}(f) = f \quad \text{and} \quad \operatorname{int}(g) = g.
				\]
				Therefore,
				\[
				x^i_\lambda \in f \leq \mathfrak{Co}(x^j_\gamma),
				\quad x^j_\gamma \in g \leq \mathfrak{Co}(x^i_\lambda),\quad \text{and} \quad f \leq \mathfrak{Co}(g).
				\]
				
				Hence, $f$ and $g$ form disjoint neighborhoods that separate $x^i_\lambda$ and $x^j_\gamma$, demonstrating that $(X, c)$ satisfies the \v{C}F$T_2$ condition.  
			\end{proof}

			But in general, every \v{C}F$T_1$ space is not \v{C}F$T_2$. For, consider the example below.

			\begin{example}
				Let $X$ be an infinite set, and define a fuzzy closure operator $c$ on $X$ as follows: 
				
				\[ \text{if } \operatorname{supp}(f) \text{ is finite},\quad
				c(f)(x) =
				\begin{cases}
					0, & \text{if } f(x) = 0, \\[6pt]
					f(x) + \dfrac{1}{2}, & \text{if } 0 < f(x) < \dfrac{1}{2},   \\[6pt]
					1, & \text{otherwise},
				\end{cases} 
				\]
				
				\[
				\text{if } \operatorname{supp}(f) \text{ is infinite}, \quad c(f) = \underline{1}.
				\]
				
				It is straightforward to verify that $c$ is a fuzzy closure operator, but not a fuzzy topological closure operator.  
				Since every fuzzy singleton is closed, it follows that $(X, c)$ is \v{C}F$T_1$.  
				We now examine the interiors of fuzzy subsets of $X$:  
				
				\medskip
				{Case 1.} If $\operatorname{supp}(\mathfrak{Co}(f))$ is infinite, then
				$
				c(\mathfrak{Co}(f)) = \underline{0},
				$
				and hence
				\[
				\operatorname{int}(f) = \mathfrak{Co}(c(\mathfrak{Co}(f))) = \underline{0}.
				\]
				
				\medskip
				{Case 2.} If $f$ takes the value $1$ at all but finitely many points of $X$.  
				Let $g = \underset{x_1 \in f}{\bigvee} x_1.$ Then $g \leq f$, and
				\[
				\operatorname{int}(g) = \mathfrak{Co}(c(\mathfrak{Co}(g))) 
				= \mathfrak{Co}(\mathfrak{Co}(g)) 
				= g.
				\]
				Thus,
				\[
				\operatorname{int}(f) \geq \operatorname{int}(g) = g = \bigvee_{x_1 \in f} x_1.
				\]
				
				\medskip
				From these observations, it follows that we cannot obtain two fuzzy subsets $f$ and $g$ of $X$ with non-empty interiors such that $f \leq \mathfrak{Co}(g)$. Therefore, $(X, c)$ is not \v{C}F$T_2$.
			\end{example}

			\begin{remark}
				Every subspace of a \v{C}F$T_2$ fuzzy closure space is \v{C}F$T_2$. (i.e., \v{C}F$T_2$ is  a hereditary property.)
			\end{remark}

			\begin{lemma}\label{thm:interior}
				If $\theta : (X, c) \to (Y, d)$ is a \v{C}F-homeomorphism, then for every fuzzy subset $f$ of $X$, we have
				\[
				\theta\!\bigl(\operatorname{int}_X(f)\bigr) = \operatorname{int}_Y\!\bigl(\theta(f)\bigr).
				\]
			\end{lemma}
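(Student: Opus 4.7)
The plan is to unfold both sides using the defining formula $\operatorname{int}(f)=\mathfrak{Co}(c(\mathfrak{Co}(f)))$ and then reduce the equality to two already-established facts: first, the characterization of \v{C}F-homeomorphisms proved earlier, namely $\theta(c(h))=d(\theta(h))$ for every $h\in I^X$; and second, the observation that a bijective map commutes with the complement operator, i.e.\ $\theta(\mathfrak{Co}(h))=\mathfrak{Co}(\theta(h))$.

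First I would record (or briefly justify) the commutation lemma for complements: since $\theta$ is a bijection, $\theta(h)(y)=h(\theta^{-1}(y))$, so both $\theta(\mathfrak{Co}(h))(y)$ and $\mathfrak{Co}(\theta(h))(y)$ evaluate to $1-h(\theta^{-1}(y))$. This is a one-line check but it is the only genuinely new input beyond the previous theorem on \v{C}F-homeomorphisms, so I would state it explicitly.

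With that in hand, the proof is a four-step rewriting:
\[
\theta(\operatorname{int}_X(f))
= \theta\!\bigl(\mathfrak{Co}(c(\mathfrak{Co}(f)))\bigr)
= \mathfrak{Co}\!\bigl(\theta(c(\mathfrak{Co}(f)))\bigr)
= \mathfrak{Co}\!\bigl(d(\theta(\mathfrak{Co}(f)))\bigr)
= \mathfrak{Co}\!\bigl(d(\mathfrak{Co}(\theta(f)))\bigr)
= \operatorname{int}_Y(\theta(f)),
\]
where the second and fourth equalities use the complement-commutation lemma (applied to $h=c(\mathfrak{Co}(f))$ and then to $h=f$), and the third equality applies the homeomorphism identity $\theta(c(h))=d(\theta(h))$ with $h=\mathfrak{Co}(f)$.

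There is no real obstacle here; the main thing to be careful about is not to invoke complement commutation \emph{before} confirming that it requires bijectivity (so it would not hold for an arbitrary \v{C}F-continuous map). I would therefore isolate the complement lemma at the start of the proof so that the chain of equalities above reads cleanly and cites the previously proved characterization of \v{C}F-homeomorphisms exactly once.
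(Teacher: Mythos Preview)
Your proof is correct and follows essentially the same approach as the paper: both use the two identities $\theta(c(h))=d(\theta(h))$ and $\theta(\mathfrak{Co}(h))=\mathfrak{Co}(\theta(h))$ (the latter relying on bijectivity) and then unwind the definition of the interior via the same chain of equalities, merely in the opposite direction. Your explicit justification of the complement-commutation step is a small addition; the paper simply asserts it.
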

			
			\begin{proof}
				Since $\theta$ is a \v{C}F-homeomorphism, we have,
				\[
				\theta(c(f)) = d(\theta(f)) 
				\quad \text{and} \quad 
				\theta(\mathfrak{Co}_X(f)) = \mathfrak{Co}_Y(\theta(f)).
				\]
				Using these identities, we compute:
				\begin{align*}
					\operatorname{int}_Y(\theta(f)) 
					&= \mathfrak{Co}_Y\!\bigl(d(\mathfrak{Co}_Y(\theta(f)))\bigr) \\[6pt]
					&= \mathfrak{Co}_Y\!\bigl(d(\theta(\mathfrak{Co}_X(f)))\bigr) \\[6pt]
					&= \mathfrak{Co}_Y\!\bigl(\theta(c(\mathfrak{Co}_X(f)))\bigr) \\[6pt]
					&= \theta\!\bigl(\mathfrak{Co}_X(c(\mathfrak{Co}_X(f)))\bigr) \\[6pt]
					&= \theta\!\bigl(\operatorname{int}_X(f)\bigr).   \quad \quad \quad \qedhere
				\end{align*} 
			\end{proof}

			\begin{Theorem}
				\v{C}F$T_2$ is a fuzzy closure property.
			\end{Theorem}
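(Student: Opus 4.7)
The plan is to mimic the argument used for \v{C}F$T_1$ being a fuzzy closure property, but now we have to transport \emph{neighborhoods} through the homeomorphism, which is exactly what Lemma \ref{thm:interior} makes possible. Let $\theta:(X,c)\to(Y,d)$ be a \v{C}F-homeomorphism with $(X,c)$ assumed to be \v{C}F$T_2$. Given two distinct fuzzy points $y^1_\lambda, y^2_\gamma$ in $Y$, I would use bijectivity of $\theta$ to pull them back to distinct fuzzy points $x^1_\lambda, x^2_\gamma$ in $X$ with $\theta(x^i)=y^i$, then apply the \v{C}F$T_2$ property in $(X,c)$ to obtain neighborhoods $f$ of $x^1_\lambda$ and $g$ of $x^2_\gamma$ satisfying $f\leq\mathfrak{Co}(g)$, $x^1_\lambda\in f\leq\mathfrak{Co}(x^2_\gamma)$ and $x^2_\gamma\in g\leq\mathfrak{Co}(x^1_\lambda)$.

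Next I would push the separating pair forward to $\theta(f)$ and $\theta(g)$. The main step is to show these are still neighborhoods: since $x^1_\lambda\leq\operatorname{int}_X(f)$, applying $\theta$ and using Lemma \ref{thm:interior} gives
\[
y^1_\lambda=\theta(x^1_\lambda)\leq \theta(\operatorname{int}_X(f))=\operatorname{int}_Y(\theta(f)),
\]
so $\theta(f)$ is a neighborhood of $y^1_\lambda$, and symmetrically $\theta(g)$ is a neighborhood of $y^2_\gamma$.

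It remains to verify the inequalities. Because $\theta$ is a bijection, it commutes with the complement operator, i.e., $\theta(\mathfrak{Co}_X(h))=\mathfrak{Co}_Y(\theta(h))$ for every $h\in I^X$, and it is order-preserving. Applying $\theta$ to $f\leq\mathfrak{Co}(g)$ therefore yields $\theta(f)\leq\mathfrak{Co}(\theta(g))$, and similarly $y^1_\lambda\in\theta(f)\leq\mathfrak{Co}(y^2_\gamma)$ and $y^2_\gamma\in\theta(g)\leq\mathfrak{Co}(y^1_\lambda)$ follow from the corresponding inequalities in $X$. This establishes the \v{C}F$T_2$ condition in $(Y,d)$.

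The only subtle point, and the reason Lemma \ref{thm:interior} was proved in advance, is that neighborhoods are defined via the interior operator rather than directly via the closure. Without that lemma, pushing forward an interior relation would not be immediate; with it, the transport is mechanical. The remaining manipulations are routine applications of the fact that a bijection preserves complements and order, so I do not anticipate a genuine obstacle beyond writing out these substitutions carefully.
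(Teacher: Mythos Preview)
Your proposal is correct and follows essentially the same approach as the paper: pull back the distinct fuzzy points via the bijection, separate them in $(X,c)$, and push the separating neighborhoods forward using Lemma~\ref{thm:interior}. You spell out in more detail than the paper why the three inequalities survive under $\theta$ (bijectivity gives $\theta(\mathfrak{Co}_X(h))=\mathfrak{Co}_Y(\theta(h))$ and order-preservation), which the paper condenses into the single phrase ``they separate $y^1_\lambda$ and $y^2_\gamma$,'' but the underlying argument is identical.
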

			
			\begin{proof}
				Let $(X, c)$ and $(Y, d)$ be fuzzy closure spaces (fcs’s), and let $\theta : (X, c) \longrightarrow (Y, d)$ be a \v{C}F-homeomorphism. Consider two distinct fuzzy points $y^1_\lambda, y^2_\gamma \in Y$. Since $\theta$ is bijective, there exist distinct points $x^1, x^2 \in X$ such that 
				\[
				\theta(x^1) = y^1 
				\quad \text{and} \quad 
				\theta(x^2) = y^2.
				\]
				
				By assumption, $(X, c)$ is \v{C}F$T_2$. Hence, there exist neighborhoods $f_1$ and $f_2$ of $x^1_\lambda$ and $x^2_\gamma$, respectively, which separate these fuzzy points. By the preceding lemma, the images $\theta(f_1)$ and $\theta(f_2)$ are neighborhoods of $y^1_\lambda$ and $y^2_\gamma$, respectively, and they separate $y^1_\lambda$ and $y^2_\gamma$ in $(Y, d)$. 
				
				Thus, $(Y, d)$ is \v{C}F$T_2$. Therefore, the property \v{C}F$T_2$ is preserved under \v{C}F-homeomorphisms, and hence it is a fuzzy closure property.
			\end{proof}

			\begin{lemma}
				Let $\mathcal{F}= \{(X_t,c_t): t \in  T\}$ be a family of pairwise disjoint fuzzy closure spaces and let $(X,c)=(\underset{t \in T}{\bigvee} X_t,\oplus c_t)$ be their sum. Then, $\intt_{X} (f)= \underset{t \in T}{\bigvee} \intt_{X_t} (f \wedge 1_{X_t})$.
			\end{lemma}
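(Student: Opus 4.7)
The plan is to unfold both sides via the definition $\intt(g)=\Co(c(\Co(g)))$, distribute the outer complement using De Morgan's law, and then reduce the identity to a pointwise check that exploits the pairwise disjointness of the $X_t$'s.

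Starting from the left-hand side and substituting $\oplus c_t(\Co_X(f))=\bigvee_{t\in T}c_t(1_{X_t}\wedge \Co_X(f))$ followed by De Morgan, I obtain
\[
\intt_X(f)\;=\;\bigwedge_{t\in T}\Co_X\bigl(c_t(1_{X_t}\wedge \Co_X(f))\bigr).
\]
The key observation is that, for each $t$, the fuzzy set $c_t(1_{X_t}\wedge \Co_X(f))$ is supported on $X_t$ (being the image under $c_t$ of a fuzzy set on $X_t$, extended by $0$ outside $X_t$), so its $X$-complement equals $1$ off $X_t$. Hence, fixing $x\in X$ and letting $t_0\in T$ be the unique index with $x\in X_{t_0}$, every term in the meet with $t\neq t_0$ contributes $1$ at $x$, leaving
\[
\intt_X(f)(x)\;=\;1-c_{t_0}\bigl(1_{X_{t_0}}\wedge \Co_X(f)\bigr)(x).
\]

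To identify this with the right-hand side, I would note that on $X_{t_0}$ the equality $(1_{X_{t_0}}\wedge \Co_X(f))(x)=1-f(x)=\Co_{X_{t_0}}(f\wedge 1_{X_{t_0}})(x)$ holds, because $1_{X_{t_0}}(x)=1$. Consequently $\intt_X(f)(x)=\intt_{X_{t_0}}(f\wedge 1_{X_{t_0}})(x)$. Since each $\intt_{X_t}(f\wedge 1_{X_t})$, regarded as a fuzzy subset of $X$ by extension by $0$, is supported on $X_t$, only the $t=t_0$ summand contributes at $x$ on the right-hand side of the claimed identity, so the two sides agree pointwise at the arbitrary $x$.

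The main subtlety, though not a deep one, is the bookkeeping between the global complement $\Co_X$ and the local complements $\Co_{X_t}$, together with the implicit identification of a fuzzy subset of $X_t$ with its extension by $0$ to $X$ when it appears inside a join indexed over $T$; once this convention is made explicit, the rest of the verification is purely formal.
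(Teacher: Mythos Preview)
Your proof is correct and follows essentially the same approach as the paper: unfold the definition of $\intt_X$, substitute the sum closure operator, and use the pairwise disjointness of the $X_t$'s to pass between the global complement $\Co_X$ and the local complements $\Co_{X_t}$. The only cosmetic difference is that the paper moves directly from $\Co_X\bigl(\bigvee_t c_t(\cdots)\bigr)$ to $\bigvee_t \Co_{X_t}(c_t(\cdots))$ in one step, whereas you first apply De~Morgan to obtain a meet and then simplify pointwise; both arguments rest on the same observation that each $c_t(\cdots)$ is supported on $X_t$.
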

			\begin{proof}
				By definition, $\oplus c_t (f)=  \underset{t \in T}{\bigvee} c_t(f \wedge 1_{X_t})$. Thus,
				\begin{align*}
					\intt_{X} (f) &=  \mathfrak{Co}_X(\oplus c_t (\mathfrak{Co}_X(f)))\\
					&= \mathfrak{Co}_X \Big(\underset{t \in T}{\bigvee} c_t (\mathfrak{Co}_X(f) \wedge 1_{X_t})\Big)\\
					&= \mathfrak{Co}_X \Big(\underset{t \in T}{\bigvee} c_t \big(\mathfrak{Co}_{X_t}(f \wedge 1_{X_t})\big)\Big)\\
					&=\underset{t \in T}{\bigvee} \mathfrak{Co}_{X_t} \Big( c_t \big(\mathfrak{Co}_{X_t}(f \wedge 1_{X_t})\big) \Big)\\
					&= \underset{t \in T}{\bigvee}  \intt_{X_t}((f \wedge 1_{X_t}) &&\qedhere 
				\end{align*}
			\end{proof}

			\begin{Theorem}
				Let $\mathcal{F} = \{(X_t, c_t) : t \in T\}$ be a family of pairwise disjoint fuzzy closure spaces. Then the sum $(X, c) = (\underset{t \in T}{\bigvee} X_t, \, \oplus c_t)$	is \v{C}F$T_2$ if and only if each $(X_t, c_t)$ is \v{C}F$T_2$.
			\end{Theorem}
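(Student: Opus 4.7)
The proof will split into the two directions of the biconditional, and the preceding lemma $\intt_X(f) = \bigvee_{t \in T} \intt_{X_t}(f \wedge 1_{X_t})$ will be the central tool for transporting neighborhoods between the summands and the sum.

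For the sufficiency direction ($\Leftarrow$), I would assume each $(X_t, c_t)$ is \v{C}F$T_2$ and take two distinct fuzzy points $x_\lambda, y_\gamma$ in $X$, separating into two cases. In the first case, $x, y$ lie in a common summand $X_{t_1}$; I would take separating neighborhoods $f_1, g_1$ of $x_\lambda, y_\gamma$ in $X_{t_1}$ and extend them by zero outside $X_{t_1}$ to obtain $f, g \in I^X$. Using the interior lemma together with $\intt_{X_t}(\underline{0}) = \underline{0}$ (which follows since $c_t(\underline{1}) = \underline{1}$), I get $\intt_X(f) \geq \intt_{X_{t_1}}(f_1) \geq x_\lambda$, and similarly for $g$, while the inequalities $f \leq \mathfrak{Co}(g)$, $f \leq \mathfrak{Co}(y_\gamma)$, $g \leq \mathfrak{Co}(x_\lambda)$ transfer directly because all the fuzzy sets involved vanish outside $X_{t_1}$. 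In the second case, $x \in X_{t_1}$ and $y \in X_{t_2}$ with $t_1 \neq t_2$; here $f = 1_{X_{t_1}}$ and $g = 1_{X_{t_2}}$ work, since the interior lemma gives $\intt_X(1_{X_{t_1}}) = 1_{X_{t_1}}$ and these crisp characteristic sets have disjoint supports.

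For the necessity direction ($\Rightarrow$), I would fix $t_1 \in T$ and distinct fuzzy points $x_\lambda, y_\gamma$ in $X_{t_1}$, regarded as fuzzy points in $X$. The \v{C}F$T_2$ assumption on $(X,c)$ produces separating neighborhoods $f, g \in I^X$, and I would set $f_1 = f \wedge 1_{X_{t_1}}$ and $g_1 = g \wedge 1_{X_{t_1}}$, viewed as fuzzy subsets of $X_{t_1}$. Evaluating the interior lemma at the point $x \in X_{t_1}$, where only the $t = t_1$ summand contributes, yields $\intt_{X_{t_1}}(f_1)(x) = \intt_X(f)(x) \geq \lambda$, so $f_1$ is a neighborhood of $x_\lambda$ in $(X_{t_1}, c_{t_1})$, and analogously for $g_1$. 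The three separation inequalities restrict pointwise from $X$ to $X_{t_1}$ without difficulty.

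The main obstacle I anticipate is the bookkeeping associated with extension-by-zero and restriction: the expression $f \wedge 1_{X_t}$ appears both as a fuzzy set on $X$ (when fed into the sum operator) and implicitly as a fuzzy set on $X_t$ (when $\intt_{X_t}$ or $c_t$ is applied), and one must justify each transition, particularly the equality $\intt_X(f)(x) = \intt_{X_{t_1}}(f \wedge 1_{X_{t_1}})(x)$ for $x \in X_{t_1}$ used in the necessity part. Once this identification is made precise via the lemma, the remaining verifications reduce to straightforward pointwise inequalities.
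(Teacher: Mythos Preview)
Your proposal is correct and follows essentially the same approach as the paper: both directions use the interior lemma $\intt_X(f)=\bigvee_{t\in T}\intt_{X_t}(f\wedge 1_{X_t})$ to transfer neighborhoods, with the same two-case split (same summand versus different summands) in the sufficiency direction and the same restriction $f\wedge 1_{X_{t_1}}$ in the necessity direction. Your treatment is in fact slightly more careful than the paper's about the extension-by-zero bookkeeping and the pointwise evaluation $\intt_X(f)(x)=\intt_{X_{t_1}}(f\wedge 1_{X_{t_1}})(x)$ for $x\in X_{t_1}$, which the paper leaves implicit.
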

			
			\begin{proof}
				$(\Rightarrow)$ Assume that $(X, c)$ is \v{C}F$T_2$.  
				Let $x_\lambda$ and $y_\gamma$ be two distinct fuzzy points in $X_s$ for some $s \in T$. Considering $x_\lambda$ and $y_\gamma$ as fuzzy points of the \v{C}F$T_2$ fuzzy closure space $(X, c)$, there exist neighborhoods $f, g \in I^X$ of $x_\lambda$ and $y_\gamma$, respectively, such that 
				\[
				f \leq \mathfrak{Co}(g), \quad x_\lambda \in f \leq \mathfrak{Co}(y_\gamma), \quad \text{and} \quad y_\gamma \in g \leq \mathfrak{Co}(x_\lambda).
				\]
				Consider the fuzzy sets $f \wedge 1_{X_s}$ and $g \wedge 1_{X_s}$. Since
				\[
				x_\lambda \in \operatorname{int}_X(f) = \bigvee_{t \in T} \operatorname{int}_{X_t}(f \wedge 1_{X_t}),
				\]
				we deduce that 
				\[
				x_\lambda \in \operatorname{int}_{X_s}(f \wedge 1_{X_s}),
				\]
				and similarly,
				\[
				y_\gamma \in \operatorname{int}_{X_s}(g \wedge 1_{X_s}).
				\]
				Thus, the neighborhoods $f \wedge 1_{X_s}$ and $g \wedge 1_{X_s}$ separate $x_\lambda$ and $y_\gamma$ in $(X_s, c_s)$. Hence, $(X_s, c_s)$ is \v{C}F$T_2$ for all $s \in T$.
				
				\medskip
				$(\Leftarrow)$ Conversely, suppose that $(X_t, c_t)$ is \v{C}F$T_2$ for all $t \in T$. Let $x_\lambda, y_\gamma \in I^X$ be distinct fuzzy points.  
				
				{Case (1):} If $x, y \in X_{t_1}$ for some $t_1 \in T$, then there exist disjoint neighborhoods $f, g \in I^{X_{t_1}}$ containing $x_\lambda$ and $y_\gamma$, respectively. Considering $f$ and $g$ as fuzzy subsets of $X$, and by the preceding lemma, we have
				\[
				\operatorname{int}_X(f) = \operatorname{int}_{X_{t_1}}(f) 
				\quad \text{and} \quad 
				\operatorname{int}_X(g) = \operatorname{int}_{X_{t_1}}(g).
				\]
				Thus, $f$ and $g$ also separate $x_\lambda$ and $y_\gamma$ in $\bigl(\underset{t \in T}{\bigvee} X_t, \oplus c_t\bigr)$.
				
				{Case (2):} If $x \in X_{t_1}$ and $y \in X_{t_2}$ with $t_1 \neq t_2$, then since
				\[
				\operatorname{int}_X(1_{X_{t_1}}) = 1_{X_{t_1}} 
				\quad \text{and} \quad
				\operatorname{int}_X(1_{X_{t_2}}) = 1_{X_{t_2}},
				\]
				we may take $1_{X_{t_1}}$ and $1_{X_{t_2}}$ as neighborhoods of $x_\lambda$ and $y_\gamma$, respectively, in $(\underset{t \in T}{\bigvee} X_t, \oplus c_t)$. These clearly separate $x_\lambda$ and $y_\gamma$.
				
				Hence, $(\underset{t \in T}{\bigvee} X_t, \oplus c_t)$ is \v{C}F$T_2$.
			\end{proof}

			\textbf{\subsection{\v{C}F$T_{2\frac{1}{2}}$ fuzzy closure spaces}}
			\vspace{6pt}

			\begin{Definition}
				A fuzzy closure space $(X,c)$ is said to be \v{C}F$T_{2\frac{1}{2}}$ or \v{C}F-Urysohn if and only if for every pair of distinct fuzzy points $x_\lambda$ and $y_\gamma$, there exist two neighborhoods $f$ and $g$ of $x_\lambda$ and $ y_\gamma $ respectively such that $c(f)\leq \mathfrak{Co}(c(g))$ and $x_\lambda \in f\leq \mathfrak{Co}(y_\gamma) $ and $ y_\gamma \in g \leq \mathfrak{Co}(x_\lambda) $. 
			\end{Definition}
			
			Every \v{C}F-Urysohn fcs is \v{C}F$T_2$ fcs. If $(X, \tau(c))$ is fuzzy Urysohn, then $(X,c)$ is \v{C}F-Urysohn. Since a fuzzy closure space is \v{C}F$T_1$ iff every fuzzy singleton is closed, we can easily get that every finite \v{C}F$T_1$ space is \v{C}$FT_{2\frac{1}{2}}$. Also we can  find that \v{C}$FT_{2\frac{1}{2}}$ is a hereditary property.

			\begin{Theorem}
				\v{C}F$T_{2\frac{1}{2}}$ is a fuzzy closure property.
			\end{Theorem}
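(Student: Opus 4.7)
The plan is to mirror the proof that $\check{C}F T_2$ is a fuzzy closure property, but now keeping track of the closure operator that appears in the stronger Urysohn-type separation condition. Let $\theta:(X,c)\to(Y,d)$ be a $\check{C}F$-homeomorphism and assume $(X,c)$ is $\check{C}F T_{2\frac{1}{2}}$. Given two distinct fuzzy points $y^1_\lambda, y^2_\gamma \in Y$, I would first use the bijectivity of $\theta$ to pull them back to distinct fuzzy points $x^1_\lambda, x^2_\gamma \in X$ with $\theta(x^i_\mu)=y^i_\mu$. Applying the $\check{C}F T_{2\frac{1}{2}}$ hypothesis in $(X,c)$, I would obtain neighborhoods $f$ of $x^1_\lambda$ and $g$ of $x^2_\gamma$ satisfying
\[
c(f)\leq \mathfrak{Co}(c(g)),\qquad x^1_\lambda\in f\leq \mathfrak{Co}(x^2_\gamma),\qquad x^2_\gamma\in g\leq \mathfrak{Co}(x^1_\lambda).
\]

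The next step is to push these data forward along $\theta$ and verify that $\theta(f)$ and $\theta(g)$ are separating neighborhoods of $y^1_\lambda$ and $y^2_\gamma$ in $(Y,d)$. Lemma~\ref{thm:interior} gives $\theta(\intt_X(f))=\intt_Y(\theta(f))$, so from $x^1_\lambda\leq \intt_X(f)$ I can conclude that $y^1_\lambda \leq \intt_Y(\theta(f))$, i.e.\ $\theta(f)$ is a neighborhood of $y^1_\lambda$; the same reasoning shows $\theta(g)$ is a neighborhood of $y^2_\gamma$. Since $\theta$ is a bijection it commutes with $\mathfrak{Co}$, so the inclusions $f\leq \mathfrak{Co}(x^2_\gamma)$ and $g\leq \mathfrak{Co}(x^1_\lambda)$ transport directly to $\theta(f)\leq \mathfrak{Co}_Y(y^2_\gamma)$ and $\theta(g)\leq \mathfrak{Co}_Y(y^1_\lambda)$.

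For the decisive condition, I would apply $\theta$ to $c(f)\leq \mathfrak{Co}(c(g))$. Using the homeomorphism identity $\theta\circ c = d\circ \theta$ (from the earlier characterization of $\check{C}F$-homeomorphism) on the left and the combination of this identity with $\theta\circ\mathfrak{Co}_X=\mathfrak{Co}_Y\circ\theta$ on the right, the inclusion becomes
\[
d(\theta(f)) \;\leq\; \mathfrak{Co}_Y(d(\theta(g))),
\]
which is exactly the Urysohn-type condition required in $(Y,d)$.

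I do not foresee any real obstacle: every ingredient needed — the interior lemma, the homeomorphism identity $\theta\circ c=d\circ \theta$, and the commutation of the bijection $\theta$ with the fuzzy complement — is already available in the excerpt, so the argument is essentially a mechanical transport of separating data from $X$ to $Y$. The only point requiring care is keeping track of whether $\mathfrak{Co}$ is taken in $X$ or $Y$ when moving across $\theta$, but this is handled uniformly by bijectivity.
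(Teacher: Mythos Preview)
Your proposal is correct and follows exactly the approach the paper itself indicates: the paper's proof reads in full ``Similar proof as that of \v{C}F$T_2$ spaces,'' and your argument is precisely that proof with the additional closure condition $c(f)\leq \mathfrak{Co}(c(g))$ transported via the identity $\theta\circ c = d\circ\theta$. There is nothing to add.
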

			\begin{proof}
				Similar proof as that of \v{C}F$T_2$ spaces.
			\end{proof}

			\begin{Theorem}
				Let $\mathcal{F}= \{(X_t,c_t): t \in  T\}$ be a family of  pairwise disjoint fuzzy closure spaces. The sum  $(X,c)= (\underset{t\in T} \bigvee X_t, \oplus c_t)$ is a \v{C}F$T_{2\frac{1}{2}}$ space if and only if each $(X_t,c_t)$ is a \v{C}F$T_{2\frac{1}{2}}$ space.
			\end{Theorem}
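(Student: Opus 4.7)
The plan is to parallel the additive argument already used for \v{C}F$T_2$ sums, enhanced by one structural observation about the sum closure operator: for each $s \in T$ and each $f \in I^X$, one has $\oplus c_t(f) \wedge 1_{X_s} = c_s(f \wedge 1_{X_s})$, because the fuzzy sets $c_t(f \wedge 1_{X_t})$ for $t \neq s$ are supported in the disjoint piece $X_t$ and are therefore annihilated by the meet with $1_{X_s}$. I will also reuse the interior identity $\operatorname{int}_X(f) = \bigvee_{t \in T} \operatorname{int}_{X_t}(f \wedge 1_{X_t})$ already established for sums.

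For the forward direction, suppose $(X,c)$ is \v{C}F$T_{2\frac{1}{2}}$, fix $s \in T$, and take distinct fuzzy points $x_\lambda, y_\gamma \in I^{X_s}$, viewed in $X$. The Urysohn property provides neighborhoods $f, g \in I^X$ with $c(f) \leq \mathfrak{Co}_X(c(g))$, together with $x_\lambda \in f \leq \mathfrak{Co}_X(y_\gamma)$ and $y_\gamma \in g \leq \mathfrak{Co}_X(x_\lambda)$. I would set $f' = f \wedge 1_{X_s}$ and $g' = g \wedge 1_{X_s}$. The interior identity delivers $x_\lambda \leq \operatorname{int}_{X_s}(f')$ and $y_\gamma \leq \operatorname{int}_{X_s}(g')$, and the closure-splitting identity converts the pointwise inequality $c(f)(x) + c(g)(x) \leq 1$ on $X_s$ into $c_s(f') \leq \mathfrak{Co}_{X_s}(c_s(g'))$. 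Since $x, y \in X_s$, the remaining pointwise containment conditions restrict to the subspace unchanged.

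For the converse, assume each summand is \v{C}F$T_{2\frac{1}{2}}$ and take distinct fuzzy points $x_\lambda, y_\gamma \in I^X$. In the case $x, y \in X_{t_1}$, I would invoke the Urysohn property inside $(X_{t_1}, c_{t_1})$ to obtain neighborhoods $f, g$ in $X_{t_1}$, then extend them by zero to fuzzy subsets of $X$. The closure-splitting identity gives $c(f) = c_{t_1}(f)$ and $c(g) = c_{t_1}(g)$, both supported in $X_{t_1}$; hence the summand's inequality lifts to $c(f) \leq \mathfrak{Co}_X(c(g))$ on $X$, since outside $X_{t_1}$ the right-hand side is identically $1$. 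In the case $x \in X_{t_1}$, $y \in X_{t_2}$ with $t_1 \neq t_2$, I would choose $f = 1_{X_{t_1}}$ and $g = 1_{X_{t_2}}$; these are their own interiors and closures in the sum, have disjoint supports, and all the required separating inequalities follow immediately.

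The main obstacle I anticipate is careful bookkeeping around the two complement operators $\mathfrak{Co}_X$ and $\mathfrak{Co}_{X_s}$ — especially verifying that the Urysohn inequality survives both the restriction step in the forward direction and the zero-extension step in the converse, where $\mathfrak{Co}_X$ behaves differently outside the supporting summand. Everything ultimately reduces to the closure-splitting identity for $\oplus c_t$; once that is in hand the calculations are mechanical analogues of those used in the \v{C}F$T_2$ sum theorem.
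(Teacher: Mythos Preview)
Your proposal is correct and is exactly what the paper has in mind: its own proof reads, in full, ``Similar proof as that of the sum of \v{C}F$T_2$ spaces.'' The one additional ingredient you supply --- the closure-splitting identity $\oplus c_t(f)\wedge 1_{X_s}=c_s(f\wedge 1_{X_s})$ --- is precisely what is needed to upgrade the \v{C}F$T_2$ argument to the Urysohn setting, and your use of it in both directions is sound.
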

			\begin{proof}
				Similar proof as that of the sum of \v{C}F$T_2$ spaces.
			\end{proof}

			\section{\v{C}F-regular fuzzy closure spaces}
			\vspace{6pt}

			\begin{Definition}
				A fuzzy closure space $(X,c)$ is said to be \v{C}F-regular if for each fuzzy point $x_\lambda$ in $X$ and each non-empty fuzzy subset $k$ of $X$ such that $x_\lambda \in \mathfrak{Co}(c(k))$, there exists neighborhoods $f$ of $x_\lambda$  and $g$ of $k$ such that $ f \leq \mathfrak{Co}(g)$. A \v{C}F-regular fcs which is also $T_s$ is said to be \v{C}F$T_3$. 
			\end{Definition}

			The indiscrete fuzzy closure operator is \v{C}F-regular. Consequently, \v{C}F-regularity does not imply \v{C}F$T_{2\frac{1}{2}}$ property or any of the weaker separation properties in the hierarchy below it. However, it can be readily verified that every \v{C}F$T_3$ space
			is also a \v{C}F$T_2$ space. Furthermore, if $c$ is a fuzzy topological closure operator, then the fts $(X,\tau(c))$ is fuzzy regular ($FT_3$) if and only if the fcs $(X,c)$ is \v{C}F-regular (\v{C}F$T_3$).
			
			\begin{remark}
				The fts $(X,\tau(c))$ being fuzzy regular does not necessarily imply that $(X,c)$ is \v{C}F-regular fcs. This is demonstrated in the following example.
			\end{remark}
			
			\begin{example}{\label{thm:regular}}
				Consider the set $\mathbb{Z}$ equipped with a fuzzy closure operator $c$ defined as follows: $c(\underline{0}) = \underline{0}$, $c(x_\lambda) = x_\lambda \vee (x+1)_\lambda$ for any fuzzy point $x_\lambda \in I^{\mathbb{Z}}$, and for any fuzzy set $f \in I^{\mathbb{Z}}$, $c(f) = \underset{x_\lambda \leq f}{\bigvee} c(x_\lambda)$. The associated \fts\ $(\mathbb{Z}, \tau(c))$ is indiscrete and is fuzzy regular. 
				
				Let $x \in \mathbb{Z}$, and consider the fuzzy point $x_\lambda$ and the fuzzy set $k = (x+1)_1$. It can be observed that every neighborhood of the fuzzy point $(x+1)_1$ contains the fuzzy set $x_1 \vee (x+1)_1$. Since $x_\lambda \in Co(c(k))$, and every neighborhood of $k$ contains $x_1$, it follows that $x_\lambda$ and $k$ cannot be separated. Therefore, $(X,c)$ is not a {\vC}F regular \fcs.
			\end{example}

			\begin{remark}
				Every \v{C}F-Urysohn (or \v{C}F$T_2$) space need not be \v{C}F-regular. This can be seen from the following example.
			\end{remark}

			\begin{example} \label{thm:nonex}
				Fix an element $x \in \mathbb{N}$ and define a finitely generated fuzzy closure operator $c$ on $\mathbb{N}$ as follows:
				\[
				c(f) = 
				\begin{cases}
					\underline{0} & \text{if } f = \underline{0}, \\
					x_{\lambda + \frac{1}{2}} & \text{if } f = x_\lambda \text{ and } \lambda \in (0, \frac{1}{2}), \\
					x_1 & \text{if } f = x_\lambda \text{ and } \lambda \in [\frac{1}{2}, 1], \\
					y_\lambda & \text{if } f = y_\lambda \text{ and } y \neq x, \\
					\underset{y_\lambda \leq f}{\bigvee} c(y_\lambda) & \text{otherwise}.
				\end{cases}
				\]
				It can be verified that $c$ is a fuzzy closure operator but not fuzzy topological. Since every fuzzy singleton is closed, $(\mathbb{N}, c)$ is \v{C}F$T_1$. For any two distinct fuzzy points $y_\lambda, z_\gamma \in I^{\mathbb{N}}$, the fuzzy sets $f = y_1$ and $g = z_1$ separate these points, and $c(f) \leq \Co(c(g))$. Hence, $(\mathbb{N}, c)$ is \vC$F$-Urysohn (and also \vC$FT_2$).
				
				If $\lambda > \frac{1}{2}$, then
				\begin{align*}
					\intt(x_\lambda) &= \Co(c(\Co(x_\lambda))) \\
					&= \Co(c(1_{\{\mathbb{N} \setminus \{x\}\}} \vee x_{1 - \lambda})) \\
					&= \Co(1_{\{\mathbb{N} \setminus \{x\}\}} \vee x_{\frac{3}{2} - \lambda}) \\
					&= x_{\lambda - \frac{1}{2}}.
				\end{align*}
				Similarly, if $\lambda \leq \frac{1}{2}$, then $\operatorname{int}(x_\lambda) = \underline{0}$. Consider the fuzzy point $x_{0.4}$ and the fuzzy set $k = x_{0.05}$. Clearly, $x_{0.4} \in \Co(c(k)) = \underline{1}_{\mathbb{N} \setminus \{x\}} \vee x_{0.45}$. Every neighborhood $f$ of $x_{0.4}$ contains $x_{0.9}$, and every neighborhood $g$ of $x_{0.05}$ contains $x_{0.55}$, so $f \nleq \Co(g)$. Therefore, $(\mathbb{N}, c)$ is not \vC F-regular, and hence not \v{C}F$T_3$.
			\end{example}

			\begin{remark}
				It is important to note that not every \v{C}F-regular space is necessarily a \v{C}F$T_3$ space. 
				
				Consider a set $X$ with at least two elements, and define a fuzzy closure operator $c$ by 
				\[
				c(f)= \bigvee_{x_\lambda \leq f} x_1 \quad \text{ for all  } f \in I^X.
				\]  
				If $x_\lambda, k \in I^X$ such that $x_\lambda \leq \mathfrak{Co}(c(k))$, the fuzzy sets $x_1$ and $\underset{y_\lambda \leq f}{\bigvee}{y_1}$ will separate $x_\lambda$ and $k$. Hence, the space $(X,c)$ is \v{C}F-regular. However, it does not satisfy the \v{C}F$T_s$ condition, and therefore it is not a \v{C}F$T_3$ space.
			\end{remark}

			Mashhour and Ghanim\cite{MASH} defined regular fcs differently, as follows:
			
			\begin{Definition} \cite{MASH} \label{thm:mash}
				A fuzzy closure space $(X,c)$ is said to be regular if for all fuzzy points $x_\lambda$ and a fuzzy set $k$ in $X$ such that $x_\lambda \in \operatorname{int}(k)$, there exists a fuzzy set  $f$ in $X$ such that $x_\lambda \in \operatorname{int}(f) \leq c(f) \leq \operatorname{int}(k)$.
			\end{Definition}
			We can discuss the difference between these two definitions. The two definitions of regularity agree on fuzzy topological spaces. The closure operator $c(f)= \underset{x_\lambda \in f}{\bigvee} x_1$ on any set $X$ is a regular fcs on both these definitions.  We can see that Example \ref{thm:nonex} is a non-regular fcs as defined by Mashhur and Ghanim.

			\begin{Theorem}
				If a \fcs\ $(X,c)$ is regular according to Mashhour's definition, then it is a \v{C}F-regular \fcs.
			\end{Theorem}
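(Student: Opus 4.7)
The strategy is to exploit the complement duality $\mathfrak{Co}\circ c = \operatorname{int}\circ \mathfrak{Co}$ (already recorded as a remark in Section 3) so that the hypothesis of the \v{C}F-regular definition can be rewritten as the hypothesis of Mashhour's definition applied to the complement of $k$. Start with an arbitrary fuzzy point $x_\lambda$ and a non-empty fuzzy set $k\in I^X$ with $x_\lambda\in \mathfrak{Co}(c(k))$. Since
\[
\mathfrak{Co}(c(k))=\mathfrak{Co}\bigl(c(\mathfrak{Co}(\mathfrak{Co}(k)))\bigr)=\operatorname{int}(\mathfrak{Co}(k)),
\]
this simply says $x_\lambda\in\operatorname{int}(\mathfrak{Co}(k))$, which is precisely the hypothesis of Definition \ref{thm:mash} applied to the fuzzy set $\mathfrak{Co}(k)$.

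Next I would invoke Mashhour's regularity on the pair $(x_\lambda,\mathfrak{Co}(k))$ to obtain a fuzzy set $f\in I^X$ satisfying
\[
x_\lambda \in \operatorname{int}(f)\leq c(f)\leq \operatorname{int}(\mathfrak{Co}(k))=\mathfrak{Co}(c(k)).
\]
This already makes $f$ a neighborhood of $x_\lambda$. For the companion neighborhood demanded by the \v{C}F-regular definition, set $g=\mathfrak{Co}(f)$. Complementing the chain $c(f)\leq \mathfrak{Co}(c(k))$ yields $c(k)\leq\mathfrak{Co}(c(f))=\operatorname{int}(\mathfrak{Co}(f))=\operatorname{int}(g)$, hence $k\leq c(k)\leq\operatorname{int}(g)$, so $g$ is a neighborhood of $k$. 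The separation condition is then automatic: $f\leq \mathfrak{Co}(g)=\mathfrak{Co}(\mathfrak{Co}(f))=f$.

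There is no real obstacle beyond spotting the duality rewrite in the first paragraph; the mildly subtle observation is that the \emph{same} sandwich set $f$ produced by Mashhour's axiom plays both roles at once — directly as the neighborhood of $x_\lambda$, and through its complement as the neighborhood of $k$ — because the inequality $c(f)\leq \operatorname{int}(\mathfrak{Co}(k))$ already encodes exactly the data required to separate $x_\lambda$ from $k$ in the sense of \v{C}F-regularity. Consequently the entire proof reduces to a short manipulation with complements and the interior operator, with no need to invoke idempotency of $c$.
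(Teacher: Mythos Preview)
Your proof is correct and follows essentially the same route as the paper: rewrite $x_\lambda\in\mathfrak{Co}(c(k))$ as $x_\lambda\in\operatorname{int}(\mathfrak{Co}(k))$, apply Mashhour's axiom to obtain the sandwich set $f$, and then take $g=\mathfrak{Co}(f)$ as the neighborhood of $k$. The paper's argument is line-for-line the same, including the observation that $k\leq c(k)\leq \mathfrak{Co}(c(f))=\operatorname{int}(\mathfrak{Co}(f))$.
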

			
			\begin{proof}
				Let $(X,c)$ be a regular \fcs\ according to Mashhour's definition. Let $x_\lambda$ be a fuzzy point in $X$ and $k \in I^X$ such that $x_\lambda \in \Co(c(k))$. This implies that
				\[
				x_\lambda \in \Co(c(\Co(\Co(k)))) = \intt(\Co(k)).
				\]
				Since $(X,c)$ is regular (by the definition of Mashhour), there exists $f \in I^X$ such that
				\[
				x_\lambda \in \intt(f) \leq c(f) \leq \intt(\Co(k)).
				\]
				Now,
				\[
				c(f) \leq \intt(\Co(k)) = \Co(c(\Co(\Co(k)))) = \Co(c(k)).
				\]
				Therefore,
				\[
				\Co(c(f)) \geq c(k) \geq k.
				\]
				Since $\intt(\Co(f)) = \Co(c(f))$, we obtain $k \leq \intt(\Co(f))$. Thus, $f$ and $\Co(f)$ are neighborhoods that separate $x_\lambda$ and $k$. Hence, $(X,c)$ is \v{C}F-regular.
			\end{proof}

			\begin{remark}
				The property of \v{C}F-regularity is hereditary; that is, any subspace of a \v{C}F-regular fuzzy closure space retains the \v{C}F-regularity property.  
			\end{remark}

			\begin{Theorem}
				\v{C}F-regularity is a fuzzy closure property.
			\end{Theorem}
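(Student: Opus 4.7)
The plan is to transport a \v{C}F-regularity witness from $X$ to $Y$ via the homeomorphism $\theta:(X,c)\to(Y,d)$, leveraging the characterization $\theta(c(f)) = d(\theta(f))$ together with the interior-preservation identity $\theta(\operatorname{int}_X(f)) = \operatorname{int}_Y(\theta(f))$ from Lemma \ref{thm:interior}.

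First, I would fix an arbitrary fuzzy point $y_\lambda$ in $Y$ and a non-empty fuzzy subset $k \in I^Y$ such that $y_\lambda \in \mathfrak{Co}_Y(d(k))$. Since $\theta$ is bijective, I can set $x = \theta^{-1}(y)$ and $k' = \theta^{-1}(k)$, so that $\theta(x_\lambda) = y_\lambda$ and $\theta(k') = k$. Next, I would translate the hypothesis $y_\lambda \in \mathfrak{Co}_Y(d(k))$ into the $X$-side: using the homeomorphism identity we have
\[
d(k) = d(\theta(k')) = \theta(c(k')),
\]
and, since $\theta$ is a bijection, $\mathfrak{Co}_Y(\theta(c(k'))) = \theta(\mathfrak{Co}_X(c(k')))$. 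Applying $\theta^{-1}$ to the membership relation yields $x_\lambda \in \mathfrak{Co}_X(c(k'))$, so the data is now in a form to which \v{C}F-regularity of $(X,c)$ applies.

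I would then invoke \v{C}F-regularity of $(X,c)$ to obtain fuzzy subsets $f, g \in I^X$ that are, respectively, neighborhoods of $x_\lambda$ and $k'$ and which satisfy $f \leq \mathfrak{Co}_X(g)$. Pushing forward, set $f^* = \theta(f)$ and $g^* = \theta(g)$. Using Lemma \ref{thm:interior}, we have $\operatorname{int}_Y(f^*) = \theta(\operatorname{int}_X(f))$ and $\operatorname{int}_Y(g^*) = \theta(\operatorname{int}_X(g))$, which respectively give $y_\lambda \in \operatorname{int}_Y(f^*)$ and $k = \theta(k') \leq \theta(\operatorname{int}_X(g)) = \operatorname{int}_Y(g^*)$. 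Finally, applying $\theta$ to $f \leq \mathfrak{Co}_X(g)$ and using bijectivity to commute $\theta$ with the complement gives $f^* \leq \mathfrak{Co}_Y(g^*)$, so $f^*$ and $g^*$ form a separating pair of neighborhoods for $y_\lambda$ and $k$ in $(Y,d)$.

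The only genuine subtlety is the commutation of $\theta$ with complements and with the interior, which is why Lemma \ref{thm:interior} and the bijectivity of $\theta$ must be used carefully; once those are in hand, every step is a direct transport of the $X$-side witnesses to $Y$. Since $y_\lambda$ and $k$ were arbitrary, $(Y,d)$ is \v{C}F-regular, establishing that \v{C}F-regularity is preserved under \v{C}F-homeomorphisms and hence is a fuzzy closure property.
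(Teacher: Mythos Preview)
Your proposal is correct and follows essentially the same approach as the paper: pull the data back along $\theta^{-1}$, apply \v{C}F-regularity of $(X,c)$, and push the separating neighborhoods forward using Lemma~\ref{thm:interior} and the commutation of $\theta$ with complements. The only cosmetic difference is that you compute $d(k)=\theta(c(\theta^{-1}(k)))$ via the forward identity $\theta\circ c=d\circ\theta$, whereas the paper uses the equivalent inverse form $\theta^{-1}\circ d=c\circ\theta^{-1}$.
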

			
			\begin{proof}
				Let $\theta \colon (X,c) \to (Y,d)$ be a \v{C}F-homeomorphism, and assume that $(X,c)$ is a \v{C}F-regular fcs. Let $y_\gamma$ be a fuzzy point in $Y$ and $k \in I^Y$ such that $y_\gamma \in \Co_Y(d(k))$. Since $\theta$ is a \v{C}F-homeomorphism, $\theta^{-1}(y_\gamma)$ is a fuzzy point in $X$. Now consider the fuzzy point $\theta^{-1}(y_\gamma)$ and the fuzzy set $\theta^{-1}(k) \in I^X$. The condition $y_\gamma \in \Co_Y(d(k))$ implies that
				\[
				\theta^{-1}(y_\gamma) \in \theta^{-1}(\Co_Y(d(k))) = \Co_X(\theta^{-1}(d(k))) = \Co_X(c(\theta^{-1}(k))),
				\]
				where the equalities follow from the properties of \vC F-homeomorphisms.
				
				Since $(X,c)$ is \v{C}F-regular, there exist neighborhoods $f$ and $g$ of $\theta^{-1}(y_\gamma)$ and $\theta^{-1}(k)$, respectively, such that $f \leq \Co_X(g)$. This inequality implies that
				\[
				\theta(f) \leq \theta(\Co_X(g)) = \Co_Y(\theta(g)).
				\]
				By Lemma \ref{thm:interior}, $\theta(f)$ is a neighborhood of $y_\gamma$ and $\theta(g)$ is a neighborhood of $k$. Therefore, $(Y,d)$ is \v{C}F-regular.
			\end{proof}

			\begin{Theorem}
				Let $\mathcal{F} = \{(X_t, c_t) : t \in T\}$ be a family of pairwise disjoint fuzzy closure spaces. The sum $(X, c) = (\underset{t\in T} \bigvee X_t,\oplus c_t)$ is \v{C}F-regular if and only if each $(X_t, c_t)$ is \v{C}F-regular.
			\end{Theorem}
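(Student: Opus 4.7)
The plan is to prove the two directions separately, with the forward direction essentially free from the hereditary remark and the substance living in the reverse direction.

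For $(\Rightarrow)$, I would invoke the hereditary property of \v{C}F-regularity already noted above, after verifying the routine point that each factor $(X_s,c_s)$ sits inside $(X,c)$ as a \v{C}F-closure subspace in the natural way. Explicitly, for $f\in I^{X_s}$ extended by zero, one has $\oplus c_t(f)=\bigvee_{t\in T}c_t(f\wedge 1_{X_t})=c_s(f)$ because the $X_t$ are pairwise disjoint and $f\wedge 1_{X_t}=\underline{0}$ for $t\neq s$; consequently the subspace operator $c_{X_s}(f)=1_{X_s}\wedge\oplus c_t(f)$ coincides with $c_s(f)$, so \v{C}F-regularity of $(X,c)$ descends to every $(X_s,c_s)$.

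For $(\Leftarrow)$, the strategy is to localize on the component containing $x$ and use the whole of each foreign component as a cost-free neighborhood. Given a fuzzy point $x_\lambda$ with $x\in X_s$ (uniquely, by disjointness) and $k\in I^X$ with $x_\lambda\in\mathfrak{Co}(c(k))$, set $k_t=k\wedge 1_{X_t}$ so that $c(k)=\bigvee_{t\in T}c_t(k_t)$. Since $c_t(k_t)$ is supported in $X_t$ for each $t$, evaluating at $x\in X_s$ only the $t=s$ term contributes, giving $x_\lambda\in\mathfrak{Co}_{X_s}(c_s(k_s))$. Applying \v{C}F-regularity of $(X_s,c_s)$ furnishes neighborhoods $f_s$ of $x_\lambda$ and $g_s$ of $k_s$ in $X_s$ with $f_s\leq\mathfrak{Co}_{X_s}(g_s)$.

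I would then lift this separation by defining $f:=f_s$, extended by zero to $X$, and $g:=g_s\vee\bigvee_{t\neq s}1_{X_t}$. Using the preceding lemma $\operatorname{int}_X(h)=\bigvee_{t\in T}\operatorname{int}_{X_t}(h\wedge 1_{X_t})$, I get $\operatorname{int}_X(f)=\operatorname{int}_{X_s}(f_s)\geq x_\lambda$ and $\operatorname{int}_X(g)=\operatorname{int}_{X_s}(g_s)\vee\bigvee_{t\neq s}1_{X_t}\geq k_s\vee\bigvee_{t\neq s}k_t=k$, confirming that $f$ and $g$ are the required neighborhoods. The inequality $f\leq\mathfrak{Co}(g)$ is a pointwise check: on $X_s$ it reduces to $f_s\leq\mathfrak{Co}_{X_s}(g_s)$, and on each $X_t$ with $t\neq s$ both sides vanish since $f$ is supported in $X_s$ while $g(y)=1$ for $y\in X_t$.

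The only step that is not pure bookkeeping is the reduction $x_\lambda\in\mathfrak{Co}_{X_s}(c_s(k_s))$, which hinges on the observation that the sum closure splits across components and that its value at a point of $X_s$ ignores every other component; the rest of the argument is a routine application of the interior-sum lemma together with the construction of $g$ that absorbs the foreign components via the open sets $1_{X_t}$.
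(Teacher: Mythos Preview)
Your argument follows essentially the same plan as the paper. For $(\Rightarrow)$ you invoke the hereditary remark after checking that each $(X_s,c_s)$ is the subspace of the sum on $X_s$; the paper instead reproves this in place by taking separating neighborhoods $f,g$ in $X$ and restricting to $f\wedge 1_{X_t}$, $g\wedge 1_{X_t}$, which amounts to the same thing. For $(\Leftarrow)$ your construction $f=f_s$, $g=g_s\vee\bigvee_{t\neq s}1_{X_t}$ is exactly the paper's Case~2.

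One small point you should make explicit: the definition of \v{C}F-regularity in the paper requires the fuzzy set $k$ to be \emph{non-empty}, so when $k_s=k\wedge 1_{X_s}=\underline{0}$ you cannot literally invoke \v{C}F-regularity of $(X_s,c_s)$ to produce $f_s,g_s$. The paper handles this as a separate Case~1, taking $f=1_{X_s}$ and $g=\mathfrak{Co}(1_{X_s})$ directly. Your unified construction actually still works in this degenerate case (with $f_s=1_{X_s}$, $g_s=\underline{0}$ chosen trivially), but you should say so rather than appeal to a hypothesis that does not apply.
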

			
			\begin{proof}
				\textbf{($\Rightarrow$)} 
				Assume that the sum $(X, c) = (\underset{t\in T} \bigvee X_t,\oplus c_t)$ is \v{C}F-regular. Let $x_\lambda$ be a fuzzy point in $X_t$ for some $t \in T$, and let $k$ be a fuzzy subset of $X_t$ such that $x_\lambda \in \mathfrak{Co}_{X_t}(c_t(k))$. Viewing $x_\lambda$ and $k$ as fuzzy subsets of $X$ and $x_\lambda \in \mathfrak{Co}_{X_t}(c_t(k))\leq \mathfrak{Co}_{X}(c_t(k))$, the \v{C}F-regularity of $(X, c)$ implies the existence of fuzzy subsets $f, g \in I^X$ that separate $x_\lambda$ and $k$ in $X$. Define $f_t = f \wedge 1_{X_t}$ and $g_t = g \wedge 1_{X_t}$, where $1_{X_t}$ is the characteristic function of $X_t$. These fuzzy subsets $f_t, g_t \in I^{X_t}$ separate $x_\lambda$ and $k$ in $X_t$, demonstrating that $(X_t, c_t)$ is \v{C}F-regular for all $t \in T$.
				
				\textbf{($\Leftarrow$)} 
				Conversely, assume that each $(X_t, c_t)$ is \v{C}F-regular for all $t \in T$. Let $x_\lambda$ be a fuzzy point in $X$ and $k \in I^X$ a fuzzy subset such that $x_\lambda \in \mathfrak{Co}(c(k))$. Suppose $x_\lambda \in X_{t_1}$ for some $t_1 \in T$. Two cases arise:
				
				\begin{enumerate}
					\item{Case 1: $k \wedge 1_{X_{t_1}} = \underline{0}$.}  
					Choose $f = 1_{X_{t_1}}$ and $g = \mathfrak{Co}(1_{X_{t_1}})$. These fuzzy subsets $f, g \in I^X$ separate $x_\lambda$ and $k$ in $X$.
					
					\item{Case 2: $k \wedge 1_{X_{t_1}} \neq \underline{0}$.}  
					Since $(X_{t_1}, c_{t_1})$ is \v{C}F-regular, there exist fuzzy subsets $f_{t_1}, g_{t_1} \in I^{X_{t_1}}$ that separate $x_\lambda$ and $k \wedge 1_{X_{t_1}}$ in $X_{t_1}$. Extend these to $X$ by defining $f = f_{t_1}$ (extended by zero outside $X_{t_1}$) and $g = g_{t_1} \vee \mathfrak{Co}(1_{X_{t_1}})$. These fuzzy subsets $f, g \in I^X$ separate $x_\lambda$ and $k$ in $X$.
				\end{enumerate}
				
				Thus, in both cases, $x_\lambda$ and $k$ can be separated in $(X, c)$, proving that $(X, c)$ is \v{C}F-regular. Therefore, the sum $(X, c)$ is \v{C}F-regular if and only if each $(X_t, c_t)$ is \v{C}F-regular.
			\end{proof}
			
			\section{\v{C}F-normal fuzzy closure spaces}
			\vspace{6pt}
			\begin{Definition}
				A fuzzy closure space $(X,c)$ is said to be \v{C}F-normal if, for all non-empty pair of fuzzy subsets $k_1,k_2\in I^X$ such that $c(k_1) \leq \mathfrak{Co}_X(c(k_2))$, there exists neighborhoods $f_1, f_2 \in I^X$ of $k_1$ and $k_2$, respectively, satisfying $f_1 \leq \mathfrak{Co}_X(f_2)$. A \v{C}F-normal fcs that is also \v{C}$F T_s$ is defined to be \v{C}F$T_4$. 
			\end{Definition}

			As in the case of fuzzy topological spaces, a \v{C}F-normal fuzzy closure space (fcs) is not necessarily \v{C}F-regular. In fact, Example \ref{thm:regular} provides an example of a \v{C}F-normal fcs that fails to be \v{C}F-regular. Moreover, every \v{C}F$T_4$ fcs is clearly \v{C}F$T_3$. Since the indiscrete topology is \v{C}F-normal, it follows that \v{C}F-normality does not necessarily imply the \v{C}F$T_3$ axiom, nor any weaker separation axioms lying below it. Furthermore, if $c$ is a fuzzy topological closure operator, then $(X,\tau(c))$ is a fuzzy normal ($FT_4$) fts if and only if $(X,c)$ is a \v{C}F-normal (\v{C}F$T_4$) fcs.
			
			\begin{remark}
				$(X,\tau(c))$ is a fuzzy normal fts does not imply that $(X,c)$ is a \v{C}F-normal fcs.
				
				Consider the fuzzy closure operator $c$ defined on the fuzzy subsets of $\mathbb{R}$ as follows:
				\[
				c(f) = 
				\begin{cases} 
					\underline{0} & \text{if } f = \underline{0}, \\
					1_{(-\infty, -2)} & \text{if } f \lneq 1_{(-\infty, -2)}, \\
					1_{(2, \infty)} & \text{if } f \lneq 1_{(2, \infty)}, \\
					1_{(-\infty, -1)} & \text{if } f = 1_{(-\infty, -2)}, \\
					1_{(1, \infty)} & \text{if } f = 1_{(2, \infty)}, \\
					\underline{1} & \text{otherwise},
				\end{cases}
				\]
				where $1_A$ denotes the characteristic function of the subset $A \subset \mathbb{R}$, taking the value $1$ on $A$ and $0$ elsewhere.
				
				The fuzzy topology $\tau(c)$ associated with $(\mathbb{R}, c)$ is indiscrete, implying that $(\mathbb{R}, \tau(c))$ is fuzzy normal. The corresponding interior operator on this space is given by
				\[
				{\intt}(f) = 
				\begin{cases} 
					\underline{1} & \text{if } f = \underline{1}, \\
					1_{(-\infty, 1]} & \text{if } 1_{(-\infty, 1]} \leq f \lneq 1_{(-\infty, 2]}, \\
					1_{(-\infty, 2]} & \text{if } 1_{(-\infty, 2]} \leq f \lneq \underline{1}, \\
					1_{[-1, \infty)} & \text{if } 1_{[-1, \infty)} \leq f \lneq 1_{[-2, \infty)}, \\
					1_{[-2, \infty)} & \text{if } 1_{[-2, \infty)} \leq f \lneq \underline{1}, \\
					\underline{0} & \text{otherwise}.
				\end{cases}
				\]
				
				For the fuzzy subsets $1_{(-\infty, -2)}$ and $1_{(2, \infty)}$, it can be observed that $c(1_{(-\infty, -2)}) \leq \mathfrak{Co}(c(1_{(2, \infty)}))$. Since the smallest neighborhoods of $1_{(-\infty, -2)}$ and $1_{(2, \infty)}$ are $1_{(-\infty, 1]}$ and $1_{[-1, \infty)}$ respectively, we can see that these two fuzzy subsets cannot be separated in $(\mathbb{R}, c)$. Hence $(\mathbb{R}, c)$ is not \v{C}F-normal.
			\end{remark}
			
			\begin{example}
				Let $X$ be a non-empty set. Fix an $x\in X$, and define a fuzzy closure operator $c$ as $c(x_\lambda)= x_1$ and if $y\neq x$, $c(y_\lambda)= \mathfrak{Co}(x_1)$, and $c(f)=\underset{z_\rho \leq f}{\bigvee} c(z_\rho)$ for other fuzzy subsets $f$ of $X$. We can see that, this finitely generated fcs $(X,c)$ is \v{C}F-normal.
			\end{example}

			\begin{remark}
				Every finite \v{C}F-regular (\v{C}F$T_3$) fcs is \v{C}F-normal (\v{C}F$T_4$). In general, every finitely generated \v{C}F-regular (\v{C}F$T_3$) fcs is \v{C}F-normal (\v{C}F$T_4$). Moreover, \v{C}F-normality is both a hereditary property and a fuzzy closure property.
			\end{remark}

			\begin{Theorem}
				Let $\mathcal{F}= \{(X_t,c_t): t \in  T\}$ be a family of pairwise disjoint fuzzy closure spaces. The sum $(X,c)= (\underset{t\in T} \bigvee X_t, \oplus c_t)$ is \v{C}F-normal if and only if each $(X_t,c_t)$ is \v{C}F-normal.
			\end{Theorem}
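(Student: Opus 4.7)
The plan is to mirror the structure of the proof for the $\v{C}F$-regular sum theorem, but track pairs of fuzzy subsets instead of a point-set pair, and to lean heavily on the two sum formulas already established in the excerpt: $\oplus c_t(f)=\bigvee_{t\in T} c_t(f\wedge 1_{X_t})$ and the analogous expression $\intt_X(f)=\bigvee_{t\in T}\intt_{X_t}(f\wedge 1_{X_t})$ proved in the preceding interior lemma. These two identities make the sum decompose into its factors cleanly on both the closure side (which defines normality) and the interior side (which defines neighborhoods).

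For the forward direction, I would fix $t_0\in T$ and take non-empty fuzzy subsets $k_1,k_2\in I^{X_{t_0}}$ with $c_{t_0}(k_1)\leq \mathfrak{Co}_{X_{t_0}}(c_{t_0}(k_2))$. Extend both by zero to elements of $I^X$. Using the sum formula, $c(k_i)=\bigvee_t c_t(k_i\wedge 1_{X_t})=c_{t_0}(k_i)$ because $k_i\wedge 1_{X_t}=\underline{0}$ for $t\neq t_0$ (here I use that $c_t(\underline{0})=\underline{0}$ and that the components are pairwise disjoint, so $c_{t_0}(k_i)\leq 1_{X_{t_0}}$). Hence $c(k_1)\leq \mathfrak{Co}_X(c(k_2))$ in the sum. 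By $\v{C}F$-normality of $(X,c)$, pick neighborhoods $f_1,f_2\in I^X$ separating them, and set $f_i^{t_0}=f_i\wedge 1_{X_{t_0}}$. Using the interior lemma, $k_i\leq\intt_X(f_i)=\bigvee_t\intt_{X_t}(f_i\wedge 1_{X_t})$, and restricting to $X_{t_0}$ gives $k_i\leq\intt_{X_{t_0}}(f_i^{t_0})$. The inequality $f_1\leq\mathfrak{Co}_X(f_2)$ restricts to $f_1^{t_0}\leq\mathfrak{Co}_{X_{t_0}}(f_2^{t_0})$, so $(X_{t_0},c_{t_0})$ is $\v{C}F$-normal.

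For the reverse direction, assume each $(X_t,c_t)$ is $\v{C}F$-normal and take non-empty $k_1,k_2\in I^X$ with $c(k_1)\leq \mathfrak{Co}_X(c(k_2))$. Set $k_i^t=k_i\wedge 1_{X_t}$. The sum formula rewrites the hypothesis as $\bigvee_t c_t(k_1^t)\leq \mathfrak{Co}_X\bigl(\bigvee_t c_t(k_2^t)\bigr)$; restricting both sides to $X_t$ yields $c_t(k_1^t)\leq \mathfrak{Co}_{X_t}(c_t(k_2^t))$ for every $t$. For each index $t$ in the set $T'=\{t\in T:k_1^t\neq\underline{0}\text{ and }k_2^t\neq\underline{0}\}$, invoke $\v{C}F$-normality of $(X_t,c_t)$ to pick neighborhoods $f_1^t,f_2^t\in I^{X_t}$ with $k_i^t\leq\intt_{X_t}(f_i^t)$ and $f_1^t\leq\mathfrak{Co}_{X_t}(f_2^t)$. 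For the remaining indices I will assign the pair carefully so as to preserve disjointness: if $k_1^t=\underline{0}$ set $f_1^t=\underline{0}$ and $f_2^t=1_{X_t}$, and symmetrically when $k_2^t=\underline{0}$; when both are zero choose either assignment. Define $f_i=\bigvee_{t\in T}f_i^t\in I^X$. By the interior lemma, $\intt_X(f_i)=\bigvee_t\intt_{X_t}(f_i^t)\geq\bigvee_t k_i^t=k_i$, so each $f_i$ is a neighborhood of $k_i$. Finally, pairwise disjointness of the $X_t$ makes $f_1^t\wedge f_2^s=\underline{0}$ for $t\neq s$, while the case-wise construction gives $f_1^t\leq\mathfrak{Co}_{X_t}(f_2^t)$ for each $t$, so $f_1\leq\mathfrak{Co}_X(f_2)$, establishing $\v{C}F$-normality of $(X,c)$.

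The main obstacle is the bookkeeping in the reverse direction when $k_1^t$ or $k_2^t$ vanishes: the definition of $\v{C}F$-normality requires non-empty sets, so I cannot blindly apply it on every component, and the assignment of $f_i^t$ on those residual components must be chosen so that the global disjointness $f_1\leq\mathfrak{Co}_X(f_2)$ still holds. The symmetric choice $f_1^t=\underline{0},\ f_2^t=1_{X_t}$ (and its mirror) handles this because $f_1^t$ is zero precisely on the factors where $f_2^t$ is allowed to be large, preserving the disjointness fibrewise and hence globally.
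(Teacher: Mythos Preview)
Your proposal is correct and follows essentially the same route as the paper: extend by zero and restrict back for the forward direction, and decompose componentwise then take the join of the separating neighborhoods for the converse, using the sum formulas for $\oplus c_t$ and $\intt_X$. Your treatment is in fact slightly more careful than the paper's, which simply invokes \v{C}F-normality of $(X_t,c_t)$ on every component without addressing the case where $k_1\wedge 1_{X_t}$ or $k_2\wedge 1_{X_t}$ is $\underline{0}$; your explicit assignment $f_1^t=\underline{0},\ f_2^t=1_{X_t}$ (or its mirror) on those residual indices fills that gap cleanly.
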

			\begin{proof}
				\par
				$(\Rightarrow)$ Assume that the sum $(\underset{t\in T} \bigvee X_t,\oplus c_t)$ is \v{C}F-normal. For any $t\in T$, let $k_1,k_2$ be two fuzzy subsets of $X_t$ such that $c(k_1) \leq \mathfrak{Co}_{X_t}(c(k_2))$. Extend $k_1$ and $k_2$ to fuzzy subsets of $X$ by defining $k_1(x) = 0$ and $k_2(x) = 0$ for $x \notin X_t$. By the \v{C}ech fuzzy normality of $(X, c)$, there exist fuzzy subsets $f, g \in I^X$ that separate $k_1$ and $k_2$ in $X$. The restrictions $f \wedge 1_{X_t}$ and $g \wedge 1_{X_t}$ in $I^{X_t}$ then separate $k_1$ and $k_2$ in $X_t$. Hence, $(X_t, c_t)$ is \v{C}F-normal for each $t \in T$.
				\par 
				$(\Leftarrow)$ Conversely suppose that each $(X_t,c_t)$ is \v{C}F-normal for all $t \in T$. Let $k_1,\text{ and } k_2$ be fuzzy subsets of $X$ such that $\oplus c_{t}(k_1)\leq \mathfrak{Co}_X (\oplus c_{t}(k_2))$. For each $t \in T$, the restrictions $k_1 \wedge 1_{X_t}$ and $k_2 \wedge 1_{X_t}$ belong to $I^{X_t}$, and the condition $c_t(k_1 \wedge 1_{X_t}) \leq \mathfrak{Co}_{X_t}(c_t(k_2 \wedge 1_{X_t}))$ holds. By the \v{C}F-normality of $(X_t, c_t)$, there exist fuzzy subsets $f_t, g_t \in I^{X_t}$ that separate $k_1 \wedge 1_{X_t}$ and $k_2 \wedge 1_{X_t}$. Define $f =  \underset{t\in T} \bigvee f_t$ and $g =  \underset{t\in T} \bigvee g_t$ in $I^X$. These fuzzy subsets $f$ and $g$ separate $k_1$ and $k_2$ in $X$, proving that $(X, c)$ is \v{C}F-normal.
			\end{proof}

			\section{Conclusions and Future Research}
			Fuzzy topological spaces do not constitute a natural boundary for the validity of theorems, but many theorems can be extended to what are called fuzzy closure spaces. In this study, we defined interior operator in fuzzy closure spaces, as well as neighborhood systems and found an equivalent condition for \v{C}F-continuity. We established a number of separation axioms, including \v{C}F$T_0$,\v{C}F$T_1$,\v{C}F$T_s$,\v{C}F$T_2$,\v{C}F$T_3$,\v{C}F$T_4$, \v{C}F-Urysohn, \v{C}F-regular, and \v{C}F-normal, using the neighborhood system. We have discovered certain characteristics of these separation axioms. Here are some questions that need more attention. Productive behaviour of some separation axioms are discovered, others yet to be investigated. We have found a characterization theorem for \v{C}F$T_1$ fcs. Researchers can investigate the characterization theorem for other separation axioms.
			Identify the lattice of \v{C}F$T_0$, \v{C}F$T_1$, \v{C}F$T_2,$ etc fuzzy closure operators. Extend all these concepts to $L$-closure spaces.

		{\fontsize{16}{24}\section{Acknowledgments}}
		First author is supported by the Senior Research Fellowship
		of CSIR (Council of Scientific and Industrial Research, India).
		
		{\fontsize{16}{24}\section{Author Contribution}}
		Every author has reviewed and approved the published version of the work.
		{\fontsize{16}{24}\section{Funding}}
		The authors declare that no external funding or support was received for the research presented in this paper, including administrative, technical, or in-kind contributions.
		{\fontsize{16}{24}\section{Data Availability}}
		All data supporting the reported findings in this research paper are provided within the manuscript.
		
		{\fontsize{16}{24}\section{Conflicts of Interest}}
		According to the authors, there is no conflict of interest with regard to the reported research findings. There was no role from funders in the design of the study, data collection, analysis, interpretation, article preparation, or decision to publish the findings.

\end{document}